\newtheorem{theo}{Theorem}[section]
\newtheorem{lem}[theo]{Lemma}
\newtheorem{prop}[theo]{Proposition}
\newtheorem{coro}[theo]{Corollary}
\def\mathrmdef#1{\expandafter\def\csname#1\endcsname{{\rm#1}}}
\def\mathsfdef#1{\expandafter\def\csname#1\endcsname{{\rm\sf#1}}}
\def\mathcaldef#1{\expandafter\def\csname#1\endcsname{{\mathcal#1}}}
\def\xxxx{\mathfrak{x}}
\def\yyyy{\mathfrak{y}}
\def\jjjj{\mathfrak{j}}
\def\eeee{\mathfrak{e}}
\def\llll{\mathfrak{l}}
\def\rrrr{\mathfrak{r}}
\def\uuuu{\mathfrak{u}}
\def\tttt{\mathfrak{t}}
\def\aaa{\mathfrak{A}}
\def\bbb{\mathfrak{B}}
\def\ccc{\mathfrak{C}}
\def\sss{\mathfrak{S}}
\def\AA{\mathbbmss{A}}
\def\BB{\mathbbmss{B}}
\def\CC{\mathbbmss{C}}
\def\AAA{\mathbb{A}}
\def\BBB{\mathbb{B}}
\def\CCC{\mathbb{C}}
\def\DDDD{\mathcal{D}}
\def\AAAA{\mathcal{A}}
\def\BBBB{\mathcal{B}}
\def\WWWW{\mathcal{W}}
\def\VVVV{\mathcal{V}}
\def\YYYY{\mathcal{Y}}
\def\TTTTT{\mathcal{T}}
\def\SSSSS{\mathcal{S}}
\def\KKKK{\mathcal{K}}
\begin{document}  
\title{On Biadjoint Triangles} 
\author{Fernando Lucatelli Nunes}
\address{CMUC, Department of Mathematics, University of Coimbra, 3001-501 Coimbra, Portugal}
\eaddress{lucatellinunes@student.uc.pt}
\amsclass{18D05, 18A40, 18C15}

\keywords{adjoint triangles, descent objects, Kan extensions, pseudomonads, biadjunctions}

\thanks{This work was supported by CNPq, National Council for Scientific and Technological Development -- Brazil (245328/2012-2),  and by the Centre for Mathematics of the
University of Coimbra -- UID/MAT/00324/2013, funded by the Portuguese
Government through FCT/MCTES and co-funded by the European Regional Development Fund through the Partnership Agreement PT2020.}


\maketitle 
\begin{abstract}
We prove a biadjoint triangle theorem and its strict version, which are $2$-dimensional analogues of the adjoint triangle theorem of Dubuc. Similarly to the $1$-dimensional case, we demonstrate how we can apply our results to get the pseudomonadicity characterization (due to Le Creurer, Marmolejo and Vitale).

Furthermore, we study applications of our main theorems in the context of the $2$-monadic approach to coherence. As a direct consequence of our strict biadjoint triangle theorem, we give the construction (due to Lack) of the left $2$-adjoint to the inclusion of the strict algebras into the pseudoalgebras.

In the last section, we give two brief applications on lifting biadjunctions and pseudo-Kan extensions.
\end{abstract}

\section*{Introduction}
Assume that  $E:\AA\to\CC$, $J: \AA\to\BB $, $L:\BB\to \CC$ are functors such that there is a natural isomorphism
\[\xymatrix{  \AA\ar[rr]^-{J}\ar[dr]_-{E}&&\BB\ar[dl]^-{L}\\
&\CC\ar@{}[u]|-\cong & }\] 
Dubuc~\cite{Dubuc} proved that if $L: \BB\to\CC $ is precomonadic, $E: \AA\to \CC$ has a right adjoint and $\AA$ has some needed equalizers, then $J$ has a right adjoint. In this paper, we give a $2$-dimensional version of this theorem, called the biadjoint triangle theorem. More precisely, let $\aaa $, $\bbb $ and $\ccc $ be $2$-categories and assume that $$E:\aaa\to\ccc, J: \aaa\to\bbb, L:\bbb\to \ccc$$ are pseudofunctors such that $L$ is pseudoprecomonadic and $E$ has a right biadjoint. We prove that, if we have the pseudonatural equivalence below, then $J$ has a right biadjoint $G$, provided that $\aaa$ has some needed descent objects.
\[\xymatrix{  \aaa\ar[rr]^-{J}\ar[dr]_-{E}&&\bbb\ar[dl]^-{L}\\
&\ccc\ar@{}[u]|-\simeq & }\] 
We also give sufficient conditions under which the unit and the counit of the obtained biadjunction are pseudonatural equivalences, provided that $E$ and $L$ induce the same pseudocomonad. Moreover, we prove a strict version of our main theorem on biadjoint triangles. That is to say, we show that, under suitable conditions, it is possible to construct (strict) right 2-adjoints.

Similarly to the $1$-dimensional case~\cite{Dubuc}, the biadjoint triangle theorem can be applied to get the pseudo(co)monadicity theorem due to Le Creurer, Marmolejo and Vitale~\cite{CME}. Also, some of the constructions of biadjunctions related to two-dimensional monad theory given by Blackwell, Kelly and Power~\cite{Power} are particular cases of the biadjoint triangle theorem.

Furthermore, Lack~\cite{SLACK2002} proved what may be called a general coherence result: his theorem states that the inclusion of the strict algebras into the pseudoalgebras of a given $2$-monad $\TTTTT $ on a $2$-category $\ccc $ has a left $2$-adjoint and the unit of this $2$-adjunction is a pseudonatural equivalence, provided that $\ccc $ has and $\TTTTT $ preserves strict codescent objects. This coherence result is also a consequence of the biadjoint triangle theorems proved in Section \ref{Main}.

Actually, although the motivation and ideas of the biadjoint triangle theorems came from the original adjoint triangle theorem~\cite{Dubuc, RS2010} and its enriched version stated in Section \ref{Dubuc}, Theorem \ref{MAIN} may be seen as a generalization of the construction,  given in \cite{SLACK2002}, of the right biadjoint to the inclusion of the $2$-category of strict coalgebras into the $2$-category of pseudocoalgebras.

In Section \ref{Dubuc}, we give a slight generalization of Dubuc's theorem, in its enriched version (Proposition \ref{D1}). This version gives the $2$-adjoint triangle theorem for $2$-pre(co)monadicity, but it lacks applicability for biadjoint triangles and pseudopre(co)monadicity. Then, in Section \ref{Bilimits} we change our setting: we recall some definitions and results of the tricategory $2$-$\CAT $ of $2$-categories, pseudofunctors, pseudonatural transformations and modifications. Most of them can be found in Street's articles~\cite{RS80, RS87}.

Section \ref{Descent} gives definitions and results related to descent objects~\cite{RS80, RS87}, which is a very important type of $2$-categorical limit in $2$-dimensional universal algebra. Within our established setting, in Section \ref{Main} we prove our main theorems (Theorem \ref{MAIN} and Theorem \ref{MAINstrict}) on biadjoint triangles, while, in Section \ref{PRECOMONADIC}, we give consequences of such results in terms of pseudoprecomonadicity (Corollary \ref{PMAIN}), using the characterization of pseudoprecomonadic pseudofunctors given by Proposition \ref{prepseudocomonadicfundamental}, that is to say, Corollary \ref{preprepseudocomonadic}.

In Section \ref{counitunit}, we give results (Theorem \ref{unitunit} and Theorem \ref{counit}) on the counit and unit of the obtained biadjunction $J\dashv G$ in the context of biadjoint triangles, provided that $E$ and $L $ induce the same pseudocomonad. Moreover, we demonstrate the pseudoprecomonadicity characterization of \cite{CME} as a consequence of our Corollary \ref{preprepseudocomonadic}.

In Section \ref{CMEE}, we show how we can apply our main theorem to get the pseudocomonadicity characterization~\cite{HER, CME} and we give a corollary of Theorem \ref{counit} on the counit of the biadjunction $J\dashv G $ in this context. Furthermore, in Section \ref{coherence} we show that the theorem of \cite{SLACK2002} on the inclusion $\TTTTT\textrm{-}\CoAlg _{{}_{\mathsf{s}}}\to \mathsf{Ps}\textrm{-}\TTTTT\textrm{-}\CoAlg $ is a direct consequence of the theorems presented herein, giving a brief discussion on consequences of the biadjoint triangle theorems in the context of the $2$-(co)monadic approach to coherence. Finally, we discuss a straightforward application on lifting biadjunctions in Section \ref{App}.

Since our main application in Section \ref{App} is about construction of right biadjoints, we prove theorems for pseudoprecomonadic functors instead of proving theorems on pseudopremonadic functors. But, for instance, to apply the results of this work in the original setting of~\cite{Power}, or to get the construction of the left biadjoint given in~\cite{SLACK2002}, we should, of course, consider the dual version: the Biadjoint Triangle Theorem \ref{dualMAIN}.

I wish to thank my supervisor Maria Manuel Clementino for her support, attention and useful feedback during the preparation of this work, realized in the course of my PhD program at University of Coimbra.

\section{Enriched Adjoint Triangles}\label{Dubuc}
Consider a cocomplete, complete and symmetric monoidal closed category $V$. Assume that $L: \BBB\to \CCC $ is a $V$-functor and $(L\dashv U, \eta , \varepsilon )$ is a $V$-adjunction. We denote by $$\chi : \CCC(L-, -)\cong \BBB (-,U-)$$ its associated $V$-natural isomorphism, that is to say, for every object $X$ of $\BBB$ and every object $Z$ of $\CCC$, $\chi _ {{}_{(X,Z)}} = \BBB (\eta _ {{}_X}, UZ )\circ U_{{}_{LX,Z}} $.

\begin{prop}[Enriched Adjoint Triangle Theorem]\label{D1}
Let $(L\dashv U, \eta , \varepsilon) $, $(E\dashv R, \rho , \mu ) $ be $V$-adjunctions such that 
$$\xymatrix{  \AAA\ar[rr]^-{J}\ar[dr]_-{E}&&\BBB\ar[dl]^-{L}\\
&\CCC & }$$
is a commutative triangle of $V$-functors. Assume that, for each pair of objects $(A\in\AAA , Y\in \BBB ) $, the induced diagram 
$$\xymatrix{ \BBB (JA, Y) \ar[r]^-{L_{{}_{JA,Y}}} & \CCC (EA, LY)\ar@<-1ex>[rrr]_-{\CCC (EA, L(\eta _{{}_Y} ))}\ar@<1ex>[rrr]^-{L_{{}_{{}_{JA,ULY}}}\circ\hspace{0.2em} \chi_ {{}_{{}_{(JA,LY)}}}} &&& \CCC (EA, LULY ) }$$
is an equalizer in $V$. The $V$-functor $J$ has a right $V$-adjoint $G$ if and only if, for each object $Y$ of $\BBB$, the $V$-equalizer of
$$\xymatrix{ RLY \ar@<0.9 ex>[rrrr]^-{RL(U(\mu _{{}_{LY}})\eta _{{}_{JRLY}})\rho _ {{}_{RLY}}}\ar@<-0.9ex>[rrrr]_-{RL(\eta _ {{}_{Y}} )} &&&& RLULY}$$
exists in the $V$-category $\AAA $. In this case, this equalizer gives the value of $GY$.
\end{prop}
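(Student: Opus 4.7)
The plan is to transport the given $V$-equalizer in $V$ along the $V$-adjunction $E\dashv R$ so that it becomes a statement about $\AAA(A,-)$, and then apply enriched Yoneda to read off $GY$ as a $V$-equalizer in $\AAA$.

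First, using the $V$-natural isomorphism $\CCC(EA,Z)\cong \AAA(A,RZ)$ induced by $\rho,\mu$, I would rewrite the hypothesis as an equalizer
\[
\BBB(JA,Y)\;\longrightarrow\; \AAA(A, RLY)\;\rightrightarrows\; \AAA(A, RLULY)
\]
in $V$, $V$-natural in $A$. By naturality of the adjunction iso the first parallel arrow $\CCC(EA, L\eta_Y)$ transports to $\AAA(A, RL\eta_Y)$. The substantive step is to identify the second parallel arrow. Given $f:EA\to LY$ with mate $g = R(f)\circ\rho_A : A\to RLY$, one has $L\chi_{(JA,LY)}(f) = LU(f)\circ L\eta_{JA}$; transporting back through $E\dashv R$ yields $RLU(f)\circ RL\eta_{JA}\circ\rho_A$. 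Writing $f=\mu_{LY}\circ E(g)$, applying naturality of $\eta$ at $Jg$ (which uses $LJ=E$ to rewrite $ULJg$ as $UEg$) and then naturality of $\rho$ at $g$, this reduces to $\phi\circ g$, where
\[
\phi \;:=\; RL\bigl(U(\mu_{LY})\circ \eta_{JRLY}\bigr)\circ \rho_{RLY}\colon RLY\longrightarrow RLULY
\]
is precisely the second map displayed in the statement.

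With this identification the hypothesis reads
\[
\BBB(JA,Y)\;\cong\;\mathrm{eq}\bigl(\AAA(A, RL\eta_Y),\ \AAA(A,\phi)\bigr)
\]
$V$-naturally in $A$. If $J$ has a right $V$-adjoint $G$, the left-hand side is $\AAA(A, GY)$ $V$-naturally in $A$, so by enriched Yoneda $GY$ is the $V$-equalizer of $RL\eta_Y$ and $\phi$ in $\AAA$. Conversely, if that $V$-equalizer $GY$ exists in $\AAA$ for each $Y$, then $\AAA(A, GY)\cong \BBB(JA, Y)$ $V$-naturally in $A$ for each $Y$; the standard enriched representability argument then upgrades $Y\mapsto GY$ to a $V$-functor and makes the isomorphism $V$-natural also in $Y$, giving $J\dashv G$.

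The principal obstacle is the naturality chase identifying the second parallel arrow with post-composition by $\phi$; everything else is a formal consequence of the two adjunction isomorphisms and enriched Yoneda, and the precise shape of $\phi$ in the statement is forced by exactly this computation.
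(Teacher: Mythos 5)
Your proposal is correct and follows essentially the same route as the paper: the paper likewise transports the hypothesized equalizer along the isomorphism $\CCC(E-,-)\cong\AAA(-,R-)$ to obtain $\BBB(JA,Y)$ as the equalizer of $\bigl(\AAA(A,q_Y),\AAA(A,r_Y)\bigr)$ with $q_Y=RL(\eta_Y)$ and $r_Y=RL(U(\mu_{LY})\eta_{JRLY})\rho_{RLY}$, and then concludes by representability. The naturality chase you flag as the substantive step is exactly the content the paper compresses into its displayed natural isomorphism of diagrams, and your sketch of it (via the mate $g=R(f)\rho_A$, naturality of $\eta$ at $Jg$, and naturality of $\rho$ at $g$) is the right computation.
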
   
\begin{proof}
For each pair of objects $(A\in\AAA, Y\in\BBB ) $, the $V$-natural isomorphism  $\CCC (E-, -)\cong \AAA (-, R-)$ gives the components of the natural isomorphism
\small
$$\xymatrix{ \BBB(JA, Y)\ar@{=}[dd] \ar[r]^-{L_{{}_{JA,Y}}} & \CCC (EA, LY)\ar[dd]|-{\cong }\ar@<-1ex>[rrr]_-{\CCC (EA, L(\eta _{{}_Y} ))}\ar@<1ex>[rrr]^-{L_{{}_{{}_{JA,ULY}}}\circ\hspace{0.2em} \chi_ {{}_{{}_{(JA,LY)}}}} &&& \CCC (EA, LULY ) \ar[dd]|-{\cong }\\
&&&&\\
\BBB(JA, Y) \ar[r] & \AAA(A, RLY) \ar@<1ex>[rrr]^{\AAA(A, r_ Y)}\ar@<-1ex>[rrr]_{\AAA(A, q_Y) } &&& \AAA(A, RLULY)}$$
\normalsize
in which $q_Y = RL(\eta _ {{}_{Y}} ) $ and $ r_Y=RL(U(\mu _{{}_{LY}})\eta _{{}_{JRLY}})\rho _ {{}_{RLY}} $. Thereby, since, by hypothesis, the top row is an equalizer, $\BBB(JA, Y)$ is the equalizer of $(\AAA(A, q_Y), \AAA (A, r_Y)) $.

Assuming that the pair $(q_ Y,r_ Y)$ has a $V$-equalizer $GY$ in $\AAA $ for every $Y$ of $\BBB $, we have that $\AAA(A, GY) $ is also an equalizer of $(\AAA(A, q_Y), \AAA (A, r_Y) )$. Therefore we get a $V$-natural isomorphism $\AAA ( -, GY)\cong \BBB(J-, Y)$.

Reciprocally, if $G$ is right $V$-adjoint to $J$, since $\AAA(-, GY)\cong \BBB(J-, Y) $ is an equalizer of $\left(\AAA(-, q_Y), \AAA(-, r_Y)\right) $, $GY$ is the $V$-equalizer of $(q_Y, r_Y) $.
This completes the proof that the $V$-equalizers of $q_Y, r_Y $ are also necessary.             
\end{proof}

The results on (co)monadicity in $V$-$\CAT $ are similar to those of the classical context of $\CAT $ (see, for instance, \cite{Dubuc2, RS72}). Actually, some of those results of the enriched context can be seen as consequences of the classical theorems because of Street's work \cite{RS72}. 

Our main interest is in Beck's theorem for $V$-precomonadicity. More precisely, it is known that the $2$-category $V$-$\CAT $ admits construction of coalgebras~\cite{RS72}. Therefore every left $V$-adjoint $L: \BBB \to \CCC $ comes with the corresponding Eilenberg-Moore factorization.
\small
$$\xymatrix{\BBB \ar[r]^-{\phi }\ar[rd]_-{L}& \CoAlg \ar[d]\\
&\CCC } $$
\normalsize 
If $V= \Set$, Beck's theorem asserts that $ \phi $ is fully faithful if and only if the diagram below is an equalizer for every object $Y$ of $\BBB $. In this case, we say that $L$ is precomonadic.
$$\xymatrix{  Y \ar[r]^-{\eta _ {{}_Y}} & ULY \ar@<-0.9 ex>[rr]_-{UL(\eta_ {{}_{Y}})}\ar@<0.9ex>[rr]^-{\eta _ {{}_{ULY}}} && ULULY}$$
With due adaptations, this theorem also holds for enriched categories. That is to say, $ \phi $ is $V$-fully faithful if and only if the diagram above is a $V$-equalizer for every object $Y$ of $\BBB $. This result gives what we need to prove Corollary \ref{DD1}, which is the enriched version for Dubuc's theorem~\cite{Dubuc}.

\begin{coro}\label{DD1}
Let $(L\dashv U, \eta , \varepsilon) $, $(E\dashv R, \rho , \mu ) $ be $V$-adjunctions and $J$ be a $V$-functor such that 
$$\xymatrix{  \AAA\ar[rr]^-{J}\ar[dr]_-{E}&&\BBB\ar[dl]^-{L}\\
&\CCC & }$$
\normalsize
commutes and $L$ is $V$-precomonadic. The $V$-functor $J$ has a right $V$-adjoint $G$ if and only if, for each object $Y$ of $\BBB$, the $V$-equalizer of
\small
$$\xymatrix{ RLY \ar@<0.9 ex>[rrrr]^-{RL(U(\mu _{{}_{LY}})\eta _{{}_{JRLY}})\rho _ {{}_{RLY}}}\ar@<-0.9ex>[rrrr]_-{RL(\eta _ {{}_{Y}} )} &&&& RLULY}$$
\normalsize
exists in the $V$-category $\AAA $. In this case, these equalizers give the value of the right adjoint $G$.
\end{coro}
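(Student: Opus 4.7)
The plan is to reduce Corollary \ref{DD1} to Proposition \ref{D1}. What I need to show is that $V$-precomonadicity of $L$ forces the equalizer hypothesis of \ref{D1}, namely that for each pair of objects $(A, Y)$ the diagram
$$\xymatrix{ \BBB (JA, Y) \ar[r]^-{L_{{}_{JA,Y}}} & \CCC (EA, LY)\ar@<-1ex>[rrr]_-{\CCC (EA, L(\eta _{{}_Y} ))}\ar@<1ex>[rrr]^-{L_{{}_{{}_{JA,ULY}}}\circ\hspace{0.2em} \chi_ {{}_{{}_{(JA,LY)}}}} &&& \CCC (EA, LULY ) }$$
is a $V$-equalizer in $V$. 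Once this is established, \ref{D1} applies verbatim and yields both directions of the equivalence together with the stated formula for $GY$.

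The first step is to invoke the enriched Beck precomonadicity theorem recalled just before the corollary: $V$-precomonadicity of $L$ is equivalent to the assertion that, for every object $Y$ of $\BBB$, the diagram $Y\to ULY\rightrightarrows ULULY$ with structure maps $\eta_{{}_Y}$, $\eta_{{}_{ULY}}$, $UL(\eta_{{}_Y})$ is a $V$-equalizer in $\BBB$. The second step is to apply the $V$-representable $\BBB(JA,-)$; since $V$-hom-functors preserve $V$-equalizers, this yields a $V$-equalizer
$$\xymatrix{ \BBB(JA, Y) \ar[r]^-{\BBB(JA,\eta_{{}_Y})} & \BBB(JA, ULY) \ar@<-0.9ex>[rr]_-{\BBB(JA,UL\eta_{{}_Y})}\ar@<0.9ex>[rr]^-{\BBB(JA,\eta_{{}_{ULY}})} && \BBB(JA, ULULY)}$$
in $V$, for every object $A$ of $\AAA$.

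The third step is to transport this diagram along the $V$-natural isomorphism $\chi^{-1}_{{}_{(JA,-)}}\colon \BBB(JA, U-)\cong \CCC(EA, L-)$ coming from $L\dashv U$ and the identity $LJ=E$. Using $V$-naturality of $\chi$ together with the triangle identity $\varepsilon_{{}_{LY}}\circ L\eta_{{}_Y}=\id$, the equalizer map $\BBB(JA,\eta_{{}_Y})$ becomes $L_{{}_{JA,Y}}$; naturality of $\varepsilon$ applied to $L\eta_{{}_Y}\colon LY\to LULY$ shows that $\BBB(JA,UL\eta_{{}_Y})$ becomes $\CCC(EA, L\eta_{{}_Y})$; and the triangle identity $\varepsilon_{{}_{LULY}}\circ L\eta_{{}_{ULY}}=\id$ shows that $\BBB(JA,\eta_{{}_{ULY}})$ becomes $L_{{}_{JA,ULY}}\circ\chi_{{}_{(JA,LY)}}$. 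The transported diagram is then precisely the one appearing in the hypothesis of \ref{D1}, and is a $V$-equalizer because transport along a $V$-isomorphism preserves $V$-equalizers.

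The main obstacle is the routine but careful bookkeeping in the third step: matching the three maps under the adjunction isomorphism. The identification of the ``new'' right-hand map $L_{{}_{JA,ULY}}\circ\chi_{{}_{(JA,LY)}}$ with the image of $\BBB(JA,\eta_{{}_{ULY}})$ is the most delicate, since it hinges on a triangle identity rather than on plain naturality; the remaining identifications are immediate from naturality of $\chi$ and $\varepsilon$. No new ideas beyond \ref{D1} and the enriched Beck theorem are needed.
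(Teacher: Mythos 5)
Your proposal is correct and follows essentially the same route as the paper: invoke the enriched Beck characterization to get the equalizer $Y\to ULY\rightrightarrows ULULY$, apply the representable $\BBB(JA,-)$, and transport along the adjunction isomorphism $\chi$ (using $LJ=E$) to see that the diagram $D^{JA}_Y$ required by Proposition \ref{D1} is an equalizer. The paper presents the transport simply by exhibiting the ladder of isomorphisms between the two rows, whereas you spell out the individual square-commutativity checks (naturality of $\chi$ and the triangle identities), but the substance is identical.
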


\begin{proof}
The isomorphisms induced by the $V$-natural isomorphism 
$\chi :\CCC(L-,-)\cong\BBB(-,U-)$ are the components of the natural isomorphism
\small
$$\xymatrix{ \BBB (JA, Y)\ar@{=}[dd] \ar[rr]^-{L_{{}_{JA,Y}}} && \CCC (EA, LY)\ar[dd]|-{\chi_ {{}_{{}_{(JA,LY)}}}}\ar@<-1ex>[rrr]_-{\CCC (EA, L(\eta _{{}_Y} ))}\ar@<1ex>[rrr]^-{L_{{}_{{}_{JA,ULY}}}\circ\hspace{0.2em} \chi_ {{}_{{}_{(JA,LY)}}}} &&& \CCC (EA, LULY )\ar[dd]|-{\chi_ {{}_{{}_{(JA,LULY)}}}}\\
&& &&&\\
\BBB(JA, Y) \ar[rr]^-{\BBB (JA, \eta _ {{}_{Y}})} && \BBB(JA, ULY) \ar@<0.9 ex>[rrr]^-{\BBB(JA, \eta _ {{}_{ULY}})}\ar@<-0.9ex>[rrr]_-{\BBB(JA, UL(\eta_ {{}_{Y}}))} &&& \BBB(JA, ULULY)
}$$
\normalsize
Since $L$ is $V$-precomonadic, by the previous observations, the top row of the diagram above is an equalizer. Thereby, for every object $A$ of $\AAA $ and every object $Y$ of $\BBB $, the bottom row, which is the diagram $D^{JA}_Y $,
is an equalizer.  By Proposition \ref{D1}, this completes the proof. 
\end{proof}

Proposition \ref{D1} applies to the case of $\CAT$-enriched category theory. But it does not give results about pseudomonad theory. For instance, the construction above does not give the right biadjoint constructed in~\cite{Power, SLACK2002} 
\begin{center}
$\mathsf{Ps}\textrm{-}\TTTTT\textrm{-}\CoAlg\to \TTTTT\textrm{-}\CoAlg_{{}_{\mathsf{s}}} . $
\end{center}

Thereby, to study pseudomonad theory properly, we study biadjoint triangles, which cannot be dealt with only $\CAT$-enriched category theory. Yet, a $2$-dimensional version of the perspective given by Proposition \ref{D1} is what enables us to give the construction of (strict) right $2$-adjoint functors in Subsection \ref{StrictVersion}.

\section{Bilimits}\label{Bilimits}
We denote by $2\textrm{-}\CAT $ the tricategory of $2$-categories, pseudofunctors (homomorphisms), pseudonatural transformations (strong transformations) and modifications. Since this is our main setting, we recall some results and concepts related to $2\textrm{-}\CAT$. Most of them can be found in \cite{RS80}, and a few of them are direct consequences of results given there.

Firstly, to fix notation, we set the tricategory $2$-$\CAT$, defining pseudofunctors, pseudonatural transformations and modifications. Henceforth, in a given $2$-category, we always denote by $\cdot $ the vertical composition of $2$-cells and by $\ast $ their horizontal composition.

\begin{defi}[Pseudofunctor]
Let $\bbb, \ccc $ be $2$-categories. A \textit{pseudofunctor} $L:\bbb\to \ccc $ is a pair $(L , \llll ) $ with the following data:
\begin{itemize}
\item Function $L : \obj (\bbb )\to \obj (\ccc ) $;
\item For each pair $(X,Y)$ of objects in $\bbb $, functors $L _{{}_{X,Y}} : \bbb (X,Y)\to \ccc (LX, LY) $;
\item For each pair $g: X\to Y , h: Y\to Z $ of $1$-cells in $\bbb $, an invertible $2$-cell of $\ccc$: $$\llll _ {{}_{hg}}: L(h) L(g)\Rightarrow L(hg); $$
\item For each object $X$ of $\bbb $, an invertible $2$-cell in $\ccc$: $$\llll_ {{}_{X}}: \id _{{}_{LX}}\Rightarrow L(\id _ {{}_X} );$$
\end{itemize} 
such that, if $\hat{g}, g: X\to Y,  \hat{h}, h:Y\to Z,  f:W\to X  $ are $1$-cells of $\bbb $, and $\xxxx : g\Rightarrow \hat{g}, \yyyy : h\Rightarrow \hat{h} $ are $2$-cells of $\bbb $, the following equations hold:
\begin{enumerate}
\item Associativity:
\tiny
$$\xymatrix{ 
LW\ar[rr]^{L(f)}\ar[dd]_{L(hgf)}\ar[ddrr]|{L(gf)}
&&
LX\ar[dd]^{L(g)}\ar@{}[dl]|{\xLeftarrow{\llll _ {{}_{gf}}}} 
&&
LW\ar[rr] ^{L(f) }\ar[dd]_{L(hgf)}\ar@{}[dr]|{\xLeftarrow{\llll_ {{}_{(hg)f}}}}
&&
LX\ar[dd]^{L(g)}\ar[ddll]|{L(hg)}
\\
&&&=&&&
\\
LZ\ar@{}[ru]|{\xLeftarrow{\llll _ {{}_{h(gf)}}}}
&&
LY\ar[ll]^{L(h)}
&&
LZ 
&&
LY\ar[ll]^{L(h)} \ar@{}[ul]|{\xLeftarrow{\llll _ {{}_{hg}}}} 							
}$$
\normalsize
\item Identity:
\tiny
$$\xymatrix{  LW     \ar[rr]^{L(f)}\ar[dd]_{L(\id _ {{}_X}f)}&&
LX\ar@/_7ex/[dd]|{L(\id _ {{}_X}) }
                    \ar@{}[dd]|{\xLeftarrow{\llll _ {{}_X}} }
                    \ar@/^7ex/[dd]|{\id _ {{}_{LX}} }
&&
LW\ar[dd]_{L(f\id _ {{}_W})}
&& 
LW\ar@{=}[ll]\ar@/_7ex/[dd]|{L(\id _ {{}_W}) }
                    \ar@{}[dd]|{\xLeftarrow{\llll _ {{}_W}} }
                    \ar@/^7ex/[dd]|{\id _ {{}_{LW}} }
										&&
LW\ar@/_4ex/[dd]|{L(f) }
                    \ar@{}[dd]|{=}
                    \ar@/^4ex/[dd]|{L(f) }										
\\
&\ar@{}[l]|{\xLeftarrow{\llll _ {{}_{\id _ {{}_X} f }}}} &
&=&
&\ar@{}[l]|{\xLeftarrow{\llll _ {{}_{f\id _ {{}_W}  }}}} &
&=& \\										
LX\ar@{=}[rr]&&
LX
&&
LX &&LW\ar[ll]^{L(f)}																				
&& LX							}$$ 
\normalsize
\item Naturality:						
\tiny
$$\xymatrix{  LX\ar[dddd]_{L(\hat{h}\hat{g})}\ar@{=}[r] &LX     \ar@{=}[r] 
                    \ar[dd]_{L(\hat{g})} 
                       & 
              {LX}  \ar[dd]^{L(g) }      && LX\ar[dddd]_{L(\hat{h}\hat{g})}\ar@{=}[r] &LX     \ar[rr]^{L(g)} 
                    \ar[dddd]^{L(hg)} 
                       && 
              {LY}  \ar[dddd]_{L(h)}\\
							&&{}\ar@{}[l]|{\xLeftarrow{L(\xxxx )}} && &&&
															\\
              &{LY}\ar@{}[l]|{\xLeftarrow{\llll _ {{}_{\hat{h}\hat{g}}}}} \ar[dd]_{L(\hat{h})} \ar@{=}[r] &
              {LY}\ar[dd]^{L(h)}   &=&  &{}\ar@{}[l]|{\xLeftarrow{L(\yyyy\ast\xxxx ) }} &\xLeftarrow{\hskip .5em \llll _ {{}_{hg}} \hskip .5em  }&
              \\
							&&{}\ar@{}[l]|{\xLeftarrow{L(\yyyy )}} && &&&
															\\
								LZ\ar@{=}[r] &LZ\ar@{=}[r]& LZ && LZ\ar@{=}[r] &LZ\ar@{=}[rr]&& LZ }$$								
\normalsize								
\end{enumerate}								
								
\end{defi}

The composition of pseudofunctors is easily defined. Namely, if $(J, \jjjj): \aaa\to\bbb, (L , \llll ): \bbb\to\ccc $ are pseudofunctors, we define the composition by $ L\circ J := (LJ, (\llll\jjjj ) )$, in which $(\llll\jjjj ) _ {{}_{hg}}:= L(\jjjj _ {{}_{hg}})\cdot \llll _ {{}_{J(h)J(g)}} $ and $(\llll\jjjj )  _ {{}_{X}}:= L(\jjjj _ {{}_{X}})\cdot \llll _ {{}_{JX}} $. This composition is associative and it has trivial identities.

Furthermore, recall that a $2$-functor $L:\bbb\to\ccc $ is just a pseudofunctor $(L, \llll ) $ such that its invertible $2$-cells $\llll _ {{}_f}$ (for every morphism $f$) and $\llll _{{}_X} $ (for every object $X$) are identities.

\begin{defi}[Pseudonatural transformation]\label{pseudonaturaltransformation}
If $ L, E:\bbb\to \ccc $ are pseudofunctors, a \textit{pseudonatural transformation} $\alpha : L\longrightarrow E $ is defined by:
\begin{itemize}
\item For each object $X$ of $\bbb $, a $1$-cell $\alpha _{{}_X}: L X\to E X $ of $\ccc $;
\item For each $1$-cell $g:X\to Y $ of $\bbb $, an invertible $2$-cell $\alpha _{{}_g}: E(g) \alpha _{{}_X}\Rightarrow \alpha _{{}_Y} L(g)  $ of $\ccc $;
\end{itemize} 
such that, if $g, \hat{g}: X\to Y,  f:W\to X  $ are $1$-cells of $\aaa $, and $\xxxx : g\Rightarrow \hat{g} $ is a $2$-cell of $\aaa $, the following equations hold:

\begin{enumerate}
\item Associativity:
\tiny
$$\xymatrix{  L W\ar[dddd]_{L (gf)}\ar@{=}[r] &L W     \ar[r]^{\alpha_{{}_W}} 
                    \ar[dd]_{L(f)} 
                       & 
              {E W }  \ar[dd]^{E (f) }      && L W\ar[dddd]_{L(gf)}\ar[r]^{\alpha _{{}_W}} &E W     \ar[rr]^{E(f)} 
                    \ar[dddd]^{E(gf)} 
                       && 
              {E X}  \ar[dddd]^{E (g)}
							\\
							&&{}\ar@{}[l]|{\xLeftarrow{\alpha _{{}_f}}} && &&&
															\\
              &{L X}\ar@{}[l]|{\xLeftarrow{\llll _ {{}_{gf}}}} \ar[dd]_{L(g)} \ar[r]_{\alpha _{{}_X}} &
              {E X}\ar[dd]^{E(g)}   &=&  &{}\ar@{}[l]|{\xLeftarrow{\alpha _{{}_{gf}} }} &\xLeftarrow{\hskip .5em \eeee _ {{}_{gf}} \hskip .5em  }&
              \\
							&&{}\ar@{}[l]|{\xLeftarrow{\alpha _{{}_{g}}}} && &&&
															\\
								L Y\ar@{=}[r] &L Y\ar[r]_{\alpha _{{}_Y}}& E Y && L Y\ar[r]_{\alpha _{{}_Y}} &E Y\ar@{=}[rr]&& E Y }$$ 							
\normalsize	

\item Identity: 
\tiny
$$\xymatrix{  L W\ar@/_7ex/[dd]|-{L (\id _ {{}_W}) }
                    \ar@{}[dd]|{\xLeftarrow{\llll _ {{}_W}} }
                    \ar@/^7ex/[dd]|-{\id _ {{}_{L W }} }
										\ar[rr]^{\alpha _{{}_W}} 
										&& E W \ar[dd]^{\id _ {{}_ {E W}}}    
&&
L W\ar[rr]^{ \alpha _ {{}_{W}} }\ar[dd]_{L (\id _ {{}_W})}  
&&E W\ar@/_7ex/[dd]|-{E (\id _ {{}_W}) }
                    \ar@{}[dd]|{\xLeftarrow{\eeee _ {{}_W}} }
                    \ar@/^7ex/[dd]|-{\id _ {{}_{E W}} }
\\
&\ar@{}[r]|{\hskip .3em =  \hskip .4em  }   &
 &=& 
&\ar@{}[l]|{\xLeftarrow{ \alpha _{{}_{\id _ {{}_W}}}  } }  & 
\\
 L W\ar[rr]_ {\alpha _ {{}_W}} && E W
 &&
L W\ar[rr]_ {\alpha _{{}_W}} &&E W	 }$$
\normalsize
\item Naturality:						
\tiny
$$\xymatrix{  L X\ar@/_6ex/[dd]_{L (\hat{g}) }
                    \ar@{}[dd]|{\xLeftarrow{L (\xxxx )} }
                    \ar@/^6ex/[dd]^{L (g) }
										\ar[rr]^{\alpha _{{}_X}} 
										&& E X\ar[dd]^{E (g)}    
&&
L X\ar[rr]^{ \alpha _ {{}_{X}} }\ar[dd]_{L (\hat{g})}  
&&E X\ar@/_6ex/[dd]_{E (\hat{g}) }
                    \ar@{}[dd]|{\xLeftarrow{E (\xxxx )} }
                    \ar@/^6ex/[dd]^{E (g) }
\\
&\ar@{}[r]|{\xLeftarrow{\hskip .2em \alpha _{{}_{g}}  \hskip .2em } }   &
 &=& 
&\ar@{}[l]|{\xLeftarrow{\hskip .2em \alpha _{{}_{\hat{g}}}  \hskip .2em } }  & 
\\
 L Y\ar[rr]_ {\alpha _ {{}_Y}} && E Y
 &&
L Y\ar[rr]_ {\alpha _{{}_Y}} &&E Y	 }$$ 
\normalsize								
\end{enumerate}								
								
\end{defi}

Firstly, we define the vertical composition, denoted by $\beta\alpha $, of two pseudonatural transformations $\alpha : L\longrightarrow E, \beta : E\longrightarrow U $ by
$$(\beta\alpha) _ {{}_W} :=\beta _{{}_W}\alpha _ {{}_W} $$
\tiny
$$\xymatrix{  
LW\ar[r]^{\beta _{{}_W}\alpha _ {{}_W}}\ar[d]_{L(f) }\ar@{}[dr]|{\xLeftarrow{(\beta\alpha ) _{{}_f}} } 
&U W\ar@{}[drr]|{:=}
\ar[d]^{U(f)}
&& 
L W\ar[rr]^{\alpha _{{}_W}}\ar[d]^{L (f)}
\ar@{}[drr]|{\xLeftarrow{\alpha  _{{}_f}} } 
&& E W\ar[d]_{E(f) } \ar[r]^{\beta _ {{}_W}} 
\ar@{}[dr]|{\xLeftarrow{\beta  _{{}_f}} }
& U W \ar[d]^{U (f)} 
\\
L X\ar[r]_{\beta _ {{}_X}\alpha _ {{}_X}} & U X
&& L X\ar[rr]_ {\alpha _ {{}_X}} && E X\ar[r]_{\beta _ {{}_X}} &U X
}$$
\normalsize

Secondly, assume that $L, E : \bbb\to \ccc$ and $ G, J : \aaa\to\bbb $ are pseudofunctors. We define the horizontal composition of two pseudonatural transformations  $\alpha : L\longrightarrow E, \lambda : G\longrightarrow J $ by
$\left( \alpha\ast\lambda\right) := (\alpha J)(L\lambda )$, 
in which $\alpha J $ is trivially defined and $(L\lambda ) $ is defined below
\begin{eqnarray*}
\begin{aligned}
(L\lambda ) _{{}_W} &:=& L(\lambda _{{}_W})
\end{aligned}
\qquad\qquad\qquad
\begin{aligned}
(L\lambda )_ {{}_f} &:=& \left(\llll _{{}_{\lambda _{{}_X}G (f)}}\right) ^{-1} \cdot L(\lambda _{{}_f})\cdot \llll _{{}_{J (f)\lambda _{{}_W}}}
\end{aligned}
\end{eqnarray*}

Also, recall that a $2$-natural transformation is just a pseudonatural transformation $\alpha :L\longrightarrow E $ such that its components $\alpha _{{}_g}: E(g) \alpha _{{}_X}\Rightarrow \alpha _{{}_Y} L(g)  $
are identities (for all morphisms $g$).

\begin{defi}[Modification]\label{Modfication}
Let $L, E:\bbb\to \ccc $ be pseudofunctors. If $\alpha , \beta : L\longrightarrow E $ are pseudonatural transformations, a \textit{modification} 
$ \Gamma : \alpha\Longrightarrow \beta $
is defined by the following data:
\begin{itemize}
\item For each object $X$ of $\bbb $, a $2$-cell $\Gamma _{{}_X}: \alpha _ {{}_X}\Rightarrow\beta _{{}_X} $ of $\ccc $;
\end{itemize} 
such that: if $f: W\to X $ is a $1$-cell of $\bbb $, the equation below holds.				
\tiny
$$\xymatrix{  LW\ar@/_6ex/[dd]_{\alpha_{{}_W} }
                    \ar@{}[dd]|{\xRightarrow{\Gamma _{{}_W}} }
                    \ar@/^6ex/[dd]^{\beta _{{}_W} }
										\ar[rr]^{L (f) } && L X\ar[dd]^{\beta _ {{}_X} }    
&&
LW\ar[rr]^{ L (f) }\ar[dd]_{\alpha _ {{}_W}}  &&LX\ar@/_6ex/[dd]_{\alpha _ {{}_X} }
                    \ar@{}[dd]|{\xRightarrow{\Gamma _ {{}_X}} }
                    \ar@/^6ex/[dd]^{\beta _ {{}_X} }
\\
&\ar@{}[r]|{\xRightarrow{\hskip .2em \beta _{{}_{f}}  \hskip .2em } }   &
 &=& 
&\ar@{}[l]|{\xRightarrow{\hskip .2em \alpha _{{}_{f}}  \hskip .2em } }  & 
\\
 E W\ar[rr]_ {E (f)} && E X
 &&
E W\ar[rr]_ {E (f) } &&E X	 }$$ 	
\normalsize

\end{defi}

The three types of compositions of modifications are defined in the obvious way.
Thereby, it is straightforward to verify that, indeed, $2\textrm{-}\CAT $ is a tricategory, lacking strictness/$2$-functoriality of the whiskering. In particular, we denote by $[\aaa , \bbb ]_{PS} $ the $2$-category of pseudofunctors $\aaa\to\bbb $, pseudonatural transformations and modifications.

The bicategorical Yoneda Lemma~\cite{RS80} says that  there is a pseudonatural equivalence $$[\sss , \CAT ]_ {PS} (\sss (a, - ), \mathcal{D})\simeq \DDDD a $$ given by the evaluation at the identity. 

\begin{lem}[Yoneda Embedding~\cite{RS80}]\label{Yoneda}
The Yoneda $2$-functor $\YYYY:\aaa\to [\aaa ^{\op }, \CAT ]_ {PS}$  is locally an equivalence (\textit{i.e.} it induces equivalences between the hom-categories). 
\end{lem}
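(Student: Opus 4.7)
The plan is to deduce the local equivalence directly from the bicategorical Yoneda Lemma stated immediately above, by exhibiting evaluation at the identity as a left inverse of $\YYYY _{a,b}$ on the nose and as an equivalence on the other composite.

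Fix objects $a,b$ of $\aaa$. Applying the bicategorical Yoneda Lemma with $\sss := \aaa ^{\op}$ and $\DDDD := \YYYY b = \aaa (-,b)$, and noting that the representable $\sss (a,-) = \aaa ^{\op}(a,-)=\aaa (-,a)$ is precisely $\YYYY a$, I obtain an equivalence of categories
$$\mathrm{ev}_a : [\aaa ^{\op}, \CAT ]_{PS}(\YYYY a,\YYYY b)\longrightarrow \YYYY b(a)=\aaa (a,b), \qquad \alpha \longmapsto \alpha _{{}_a}(\id _{{}_a}),$$
given by evaluation at $\id _{{}_a}\in \aaa (a,a)$, together with an analogous assignment on modifications.

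Next I unfold the Yoneda $2$-functor on hom-categories. By definition, $\YYYY _{a,b}$ sends a $1$-cell $f: a\to b$ to the pseudonatural transformation $\YYYY f = \aaa (-,f)$ whose component at an object $c$ is the functor $\aaa (c,f):\aaa (c,a)\to \aaa (c,b)$ given by post-composition with $f$. In particular, the $a$-component evaluated at $\id _{{}_a}$ returns $f\circ \id _{{}_a}=f$. The same unfolding for $2$-cells $\xxxx : f\Rightarrow g$ in $\aaa (a,b)$ shows that the modification $\YYYY \xxxx$ has $a$-component (at $\id _{{}_a}$) equal to $\xxxx $ itself. Hence
$$\mathrm{ev}_a\circ \YYYY _{a,b}=\id _{\aaa (a,b)}.$$

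Finally, since $\mathrm{ev}_a$ is an equivalence and $\YYYY _{a,b}$ is a strict section of it, $\YYYY _{a,b}$ is automatically an equivalence as well: picking any quasi-inverse $\mathcal{G}$ of $\mathrm{ev}_a$ gives $\YYYY _{a,b}\cong \mathcal{G}\circ \mathrm{ev}_a\circ \YYYY _{a,b}=\mathcal{G}$, so $\YYYY _{a,b}$ is isomorphic to an equivalence and is therefore itself an equivalence. This holds for every pair $(a,b)$, which is exactly the claim that $\YYYY$ is locally an equivalence. I do not anticipate any real obstacle; the only care needed is the bookkeeping identification of the representable $\aaa ^{\op}(a,-)$ with $\YYYY a$ and the explicit description of $\YYYY f$ as post-composition, so that the composite $\mathrm{ev}_a\circ \YYYY _{a,b}$ really is the identity rather than merely naturally isomorphic to it.
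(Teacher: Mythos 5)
Your argument is correct and is exactly the standard deduction the paper implicitly relies on: the paper itself offers no proof of this lemma, merely citing Street, but it states the bicategorical Yoneda Lemma immediately beforehand, and your proof shows precisely how the local equivalence follows from it. The two points that need care — that $\mathrm{ev}_a\circ\YYYY_{a,b}$ is the identity on the nose (which holds because composition with $\id_a$ and whiskering by $\id_{\id_a}$ are strict in a $2$-category) and that a strict section of an equivalence is itself an equivalence — are both handled correctly, so there is nothing to add.
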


Considering pseudofunctors $L: \bbb\to \ccc $ and $U:\ccc\to\bbb $, we say that $U$ is right biadjoint to $L$ if we have a pseudonatural equivalence $\ccc (L - , -)\simeq \bbb (- , U-) $. This concept can be also defined in terms of unit and counit as it is done at Definition \ref{ad}. 

\begin{defi}\label{ad}
Let 
$U:\ccc\to\bbb , L:\bbb\to\ccc $
be pseudofunctors. $L$ is
\textit{left biadjoint} to $U$ if there exist
\begin{enumerate}
\item
pseudonatural transformations $\eta :\Id _ {\bbb } \longrightarrow UL$ and
$\varepsilon :LU \longrightarrow \Id _ { \ccc }$
\item
invertible modifications
$s : \id _{L} \Longrightarrow (\varepsilon L)  (L\eta)$
and
$t : (U\varepsilon)  (\eta U) \Longrightarrow \id _{U}$
\end{enumerate}
such that the following $2$-cells are identities~\cite{JGRAY74}:
\small
$$\xymatrix{  \Id _ {\bbb }     \ar[rr]^{\eta} 
                    \ar[dd]_{\eta} 
                       && 
              {UL}  \ar[dd]_{\eta UL}
                    \ar@{=}@/^6ex/[dddr]
                    \ar@{}[dddr]|{\xRightarrow{  tL }} &&
										{LU}  \ar@{=}@/^6ex/[drrr]
                    \ar@{}[drrr]|{\xRightarrow{  \llll _{{}_{U}}^{-1} (Lt)          \llll _ {{}_{(U\varepsilon ) (\eta U)}}  }} 
                    \ar[dr]|{L\eta U}
                    \ar@{}[dddr]|{\xRightarrow{sU}}
                    \ar@{=}@/_6ex/[dddr]   &&& 
                                       \\ 
          & \xRightarrow{ \hskip .5em \eta_{(\eta)}}  &&&   & {LULU}\ar[rr]^{LU\varepsilon}                         
                    \ar[dd]^{\varepsilon LU} &&
              {LU}  \ar[dd]^{\varepsilon}                            
                                        \\
              {UL}  \ar[rr]_{UL\eta }
                    \ar@{=}@/_6ex/[drrr]
                    \ar@{}[drrr]|{\xRightarrow{\uuuu ^{-1}_ {{}_{(L\eta) (\varepsilon L)}} (Us) \uuuu _{{}_{L}}          }} &&
              {ULUL}\ar[dr]|{U\varepsilon L} && &  &\xRightarrow{\hskip .2em \varepsilon_{(\varepsilon)}} &
							\\                                            
          &&& {UL} & & {LU}  \ar[rr]_{\varepsilon}   && \Id _ {\ccc }
					}$$ 							
\normalsize
\end{defi}

\begin{rem}\label{addd}
By definition, if a pseudofunctor $L$ is left biadjoint to $U$, there is at least one associated data $(L\dashv U, \eta , \varepsilon , s, t) $ as described above. Such associated data is called a \textit{biadjunction}. 

Also, every biadjunction $(L\dashv U, \eta , \varepsilon , s, t) $ has an associated pseudonatural equivalence 
$\chi : \ccc(L-, -)\simeq \bbb (-,U-)$,
in which  
\small
\begin{eqnarray*}
\chi _ {{}_{(X,Z)}}: &\ccc(LX, Z)&\to \bbb (X,UZ)\\
                &f        &\mapsto U(f)\eta _{{}_X}\\
								&\mathfrak{m} &\mapsto U(\mathfrak{m})\ast\id _ {{}_{\eta _{{}_X}}}
\end{eqnarray*}
\begin{eqnarray*}
\left(\chi _ {{}_{(g,h)}}\right) _ {{}_f} &:=& \left(\uuuu _{{}_{(hf)Lg}}\ast\id _ {{}_{\eta _ {{}_{X}} }}\right) \cdot \left(\uuuu _ {{}_{hf}}\ast \eta _ {{}_{g}}^{-1}\right) 
\end{eqnarray*}		
\normalsize		
Reciprocally, such a pseudonatural equivalence induces a biadjunction $(L \dashv U, \eta , \varepsilon , s, t)$.
\end{rem}

\begin{rem}\label{unitbiadjunction}
Similarly to the $1$-dimensional case, if $(L\dashv U, \eta , \varepsilon , s, t) $ is a biadjunction, the counit $\varepsilon : LU\longrightarrow \id _ {\ccc }$ is a pseudonatural equivalence if and only if, for every pair $(X,Y) $ of objects  of $\ccc $,
$U_{{}_{X,Y}}: \ccc (X,Y)\to \bbb (UX, UY) $
is an equivalence (that is to say, $U$ is \textit{locally an equivalence}).

The proof is also analogous to the $1$-dimensional case. Indeed, given a pair $(X,Y) $ of objects in $\bbb $, the composition of functors 
$$\xymatrix{ \bbb (X, Y)\ar[r]^-{\bbb (\varepsilon _ {{}_{X}}, Y) } & \bbb (LUX, Y)\ar[r]^-{\chi _{{}_{(UX, Y) }} }  & \bbb (LX, LY) 
}$$ 
is obviously isomorphic to $U_{{}_{X,Y}}:\ccc (X,Y)\to \bbb (UX, UY)$. Since $\chi _{{}_{(UX, Y) }}$ is an equivalence, $\varepsilon _ {{}_{X}}$ is an equivalence for every object $X$ (that is to say, it is a pseudonatural equivalence) if and only if $U $ is locally an equivalence. Dually, the unit of this biadjunction is a pseudonatural equivalence if and only if $L$ is locally an equivalence.
\end{rem}

\begin{rem}
Recall that, if the modifications $s,t$ of a biadjunction $(L\dashv  U, \eta , \varepsilon , s, t) $ are identities, $L, U $ are $2$-functors and $\eta , \varepsilon $ are $2$-natural transformations, then $L$ is left $2$-adjoint to $U$ and 
$(L\dashv U, \eta , \varepsilon ) $ is a $2$-adjunction.
\end{rem}

If it exists, a birepresentation of a pseudofunctor $\mathcal{U}:\ccc\to\CAT $ is an object $X$ of $\ccc$ endowed with a pseudonatural equivalence $\ccc (X, -)\simeq \mathcal{U} $.  When $\mathcal{U}$ has a birepresentation, we say that $\mathcal{U}$ is birepresentable. Moreover, in this case, by Lemma \ref{Yoneda}, its birepresentation is unique up to equivalence. 

\begin{lem}[\cite{RS80}]\label{rep}
Assume that $\mathcal{U}: \ccc \to [\bbb ^{\op } , \CAT]_ {PS} $ is a pseudofunctor such that, for each object $X$ of $\ccc $,  $\mathcal{U}X$ has a birepresentation $e_X: \mathcal{U}X\simeq \bbb (- , UX ) $. Then there is a pseudofunctor $ U: \ccc \to\bbb $ such that the pseudonatural equivalences $e_X $ are the components of a pseudonatural equivalence $\mathcal{U}\simeq \bbb (-, U-) $, in which $\bbb (-,U-) $ denotes the pseudofunctor
$$
\ccc  \to  [\bbb ^{\op }, \CAT]_ {PS}:\mbox{  } X \mapsto  \bbb (-, UX)  
$$
\end{lem}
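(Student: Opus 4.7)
The plan is to use that the Yoneda embedding $\YYYY : \bbb \to [\bbb^{\op}, \CAT]_{PS}$ is locally an equivalence (Lemma \ref{Yoneda}) in order to transport $\mathcal{U}$ through the given equivalences $e_X$ to a pseudofunctor with target $\bbb$. First, for each object $X$ of $\ccc$ I would fix a pseudonatural pseudo-inverse $e_X^{*} : \bbb(-, UX) \simeq \mathcal{U}X$ together with invertible modifications witnessing $e_X\, e_X^{*} \cong \id$ and $e_X^{*}\, e_X \cong \id$. The object assignment $X \mapsto UX$ is then already determined; the task is to extend it functorially.

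To define $U$ on a morphism $f : X \to Y$ of $\ccc$, I would form the composite pseudonatural transformation $e_Y \circ \mathcal{U}(f) \circ e_X^{*} : \bbb(-, UX) \to \bbb(-, UY)$. Since $\YYYY$ is locally an equivalence, the hom-functor $\YYYY_{UX, UY}$ is an equivalence of categories, so this pseudonatural transformation is isomorphic to $\YYYY(U(f))$ for a morphism $U(f) : UX \to UY$ in $\bbb$, which I fix together with a witnessing invertible modification $\Gamma_f$. For a $2$-cell $\alpha : f \Rightarrow g$ of $\ccc$, whiskering the modification $\mathcal{U}(\alpha)$ by $e_X^{*}$ and $e_Y$ and then conjugating by $\Gamma_f, \Gamma_g$ yields a modification $\YYYY(U(f)) \Longrightarrow \YYYY(U(g))$, which by local fullness and faithfulness of $\YYYY$ descends to a unique $2$-cell $U(\alpha) : U(f) \Rightarrow U(g)$.

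Next, I would construct the pseudofunctoriality data $\uuuu_{hg} : U(h) U(g) \Rightarrow U(hg)$ and $\uuuu_{X} : \id_{UX} \Rightarrow U(\id_X)$. Pasting $\Gamma_g, \Gamma_h, \Gamma_{hg}$, the counit $e_Y^{*} e_Y \cong \id$, and the pseudofunctoriality constraint of $\mathcal{U}$ produces an invertible modification $\YYYY(U(h))\,\YYYY(U(g)) \cong \YYYY(U(hg))$ in the functor $2$-category, which the local equivalence $\YYYY_{UX, UZ}$ lifts to the required $2$-cell $\uuuu_{hg}$ in $\bbb$; the $\uuuu_X$ are obtained analogously from the unit constraint of $\mathcal{U}$. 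The three pseudofunctor axioms (associativity, identity, naturality) then hold: after applying $\YYYY$, each side unfolds into a pasting composed entirely of known-commuting pieces coming from the pseudofunctor axioms for $\mathcal{U}$ and the coherence of the equivalences $e_X$, and local faithfulness of $\YYYY$ reflects the resulting equations back to $\bbb$.

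Finally, the sought pseudonatural equivalence $\mathcal{U} \simeq \bbb(-, U-)$ has components $e_X$, and its structural invertible modification at a morphism $f : X \to Y$ is extracted from $\Gamma_f$ by cancelling $e_X^{*} e_X$; its pseudonaturality axioms are verified in the same style, by passing through $\YYYY$ and exploiting the pseudofunctor axioms for $U$ already established. I expect the main obstacle to be bookkeeping rather than genuine difficulty: one must organise the adjoint-equivalence data for each $e_X$, the constraints $\Gamma_f$, and the pseudofunctoriality data of $\mathcal{U}$ so that the various pastings close up. The local-equivalence character of $\YYYY$ is exactly what makes this essentially mechanical once the data are correctly assembled.
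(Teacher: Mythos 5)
The paper gives no proof of Lemma \ref{rep}; it is quoted from Street's \emph{Fibrations in bicategories} \cite{RS80}. Your argument is the standard transport-of-structure proof via the bicategorical Yoneda embedding being locally an equivalence (Lemma \ref{Yoneda}), and it is correct: promoting each $e_X$ to an adjoint equivalence, lifting $e_Y\circ\mathcal{U}(f)\circ e_X^{*}$ through the essentially surjective, fully faithful hom-functors of $\YYYY$, and reflecting all coherence equations back along local faithfulness is exactly how this result is established.
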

As a consequence, a pseudofunctor $L: \bbb \to \ccc $ has a right biadjoint if and only if, for each object $X$ of $\ccc $, the pseudofunctor $\ccc ( L-, X) $ is birepresentable. \textit{Id est}, for each object $X$, there is an object $UX$ of $\bbb $ endowed with a pseudonatural equivalence $\ccc ( L-, X) \simeq \bbb (-,UX)$.

The natural notion of limit in our context is that of (weighted) bilimit~\cite{RS80, RS87}. Namely, assuming  that $\sss $ is a small $2$-category, if $ \WWWW : \sss \to \CAT, \DDDD :\sss\to\aaa $ are pseudofunctors, the (weighted) bilimit, denoted herein by $\left\{ \WWWW, \DDDD\right\} _ {\bi} $, when it exists, is a birepresentation of the $2$-functor
$$ \aaa ^{\op }\to  \CAT :\mbox{ } X \mapsto   [\sss , \CAT ]_{PS}(\WWWW, \aaa (X, \DDDD -) ).$$
Since, by the (bicategorical) Yoneda Lemma,  $\left\{ \WWWW, \DDDD\right\}_ {\bi} $ is unique up to equivalence, we sometimes refer to it as \textit{the} (weighted) bilimit.

Finally, if $\WWWW$ and $\DDDD $ are $2$-functors, recall that the (strict) weighted limit $\left\{ \WWWW, \DDDD\right\} $ is, when it exists, a $2$-representation of the $2$-functor $X \mapsto   [\sss , \CAT ](\WWWW , \aaa (X, \DDDD -) )$, in which $[\sss , \CAT ]$ is the $2$-category of $2$-functors $\sss\to\CAT $, $2$-natural transformations and modifications~\cite{RS76}.

It is easy to see that $\CAT $ is bicategorically complete. More precisely, if   $\WWWW : \sss \to \CAT $ and $ \DDDD :\sss\to \CAT $ are pseudofunctors, then $$\left\{ \WWWW, \DDDD\right\}_ {\bi}  \simeq  [\sss , \CAT ]_{PS}(\WWWW, \DDDD ) .$$ Moreover, from the bicategorical Yoneda Lemma of \cite{RS80}, we get the  (strong) bicategorical Yoneda Lemma. 

\begin{lem}[(Strong) Yoneda Lemma]\label{Yoneda2}
Let $\DDDD : \sss\to\aaa $ be a pseudofunctor between $2$-categories. There is a pseudonatural equivalence $\left\{ \sss (a, -), \DDDD\right\} _ {\bi}\simeq \DDDD a $. 
\end{lem}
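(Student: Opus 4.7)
The plan is to unwind the definition of the bilimit and reduce the statement to the bicategorical Yoneda Lemma already recalled before Lemma \ref{Yoneda}. By definition, $\left\{ \sss(a,-),\DDDD\right\}_{\bi}$ is, when it exists, a birepresentation of the pseudofunctor
\[ \mathcal{U}: \aaa^{\op}\to \CAT,\qquad X\mapsto [\sss,\CAT]_{PS}(\sss(a,-),\,\aaa(X,\DDDD-)). \]
So it suffices to show that $\DDDD a$ carries a pseudonatural equivalence $\aaa(-,\DDDD a)\simeq \mathcal{U}$ and then invoke the uniqueness-up-to-equivalence of birepresentations.

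First I would fix $X$ and apply the bicategorical Yoneda Lemma to the pseudofunctor $\aaa(X,\DDDD-):\sss\to\CAT$: evaluation at the identity $1_a$ yields an equivalence of categories
\[ \mathrm{ev}_{a,X}\colon [\sss,\CAT]_{PS}(\sss(a,-),\,\aaa(X,\DDDD-))\;\simeq\; \aaa(X,\DDDD a). \]
This produces the prospective component at each object $X$ of $\aaa$. Next I would verify that these equivalences assemble into a pseudonatural equivalence in $X$: given a $1$-cell $f:X'\to X$, post-composition with $\aaa(f,\DDDD-)$ on the left-hand side corresponds (up to a canonical invertible $2$-cell coming from the pseudofunctoriality of $\aaa(-,\DDDD a)$) to post-composition with $f$ on the right-hand side, because $\mathrm{ev}_{a,-}$ is defined by evaluation at the common object $1_a\in\sss(a,a)$. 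The compatibility $2$-cells are forced by how horizontal composition of pseudonatural transformations with $\aaa(f,-)$ interacts with evaluation at $1_a$, and the modification axioms reduce to the coherence data already used in the Yoneda Lemma. Then Lemma \ref{rep} packages these component equivalences into a single pseudonatural equivalence $\aaa(-,\DDDD a)\simeq \mathcal{U}$, exhibiting $\DDDD a$ as the required birepresentation.

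The main obstacle, as is typical in $2$\nobreakdash-categorical Yoneda-style arguments, is bookkeeping: one must check that the $2$\nobreakdash-cells produced by evaluating the pseudonatural transformations at $1_a$ and by the coherence isomorphisms $\llll_{{}_a}$, together with those of $\aaa(X,\DDDD-)$ on the identity, satisfy the pseudonaturality squares and modification axioms. All of this, however, is formal consequence of the coherence data displayed in Definitions of pseudofunctor, pseudonatural transformation and modification, and is essentially the content of the non-strong Yoneda Lemma \ref{Yoneda} applied pointwise together with the $2$-functoriality of the assignment $X\mapsto [\sss,\CAT]_{PS}(\sss(a,-),\,\aaa(X,\DDDD-))$. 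Hence no further genuine computation is required beyond that already carried out in \cite{RS80}.
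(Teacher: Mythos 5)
Your proposal is correct and follows essentially the same route as the paper: both reduce the claim to the bicategorical Yoneda Lemma applied to $\aaa(X,\DDDD-)$ via evaluation at $\id_{{}_a}$, observing that the resulting equivalences are pseudonatural in $X$ so that $\DDDD a$ birepresents the defining pseudofunctor of the bilimit. The paper simply compresses the pseudonaturality-in-$X$ verification that you spell out into the phrase ``pseudonatural equivalence (in $X$ and $a$)''.
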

\begin{proof}
By the bicategorical Yoneda Lemma, we have a pseudonatural equivalence (in $X$ and $a$)  
$$[\sss , \CAT ] _ {PS} (\sss (a, -), \aaa (X, \DDDD -))\simeq \aaa (X, \DDDD a) .$$
Therefore $\DDDD a$ is \textit{the} bilimit $\left\{ \sss (a, -), \DDDD\right\} _{\bi } $. 
\end{proof} 

Recall that the usual (enriched) Yoneda embedding $\aaa\to\left[ \aaa ^{\op }, \CAT\right] $ preserves and reflects weighted limits. In the $2$-dimensional case, we get a similar result. 

\begin{lem}\label{Yonedaargument}
The Yoneda embedding $\YYYY : \aaa\to\left[ \aaa ^{\op }, \CAT\right] _ {PS} $ preserves and reflects weighted bilimits.
\end{lem}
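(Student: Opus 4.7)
The plan is to exhibit, for each object $X$ of $\aaa$, a chain of pseudonatural equivalences identifying $\aaa(X, \{\WWWW, \DDDD\}_{\bi})$ with $[\aaa^{\op}, \CAT]_{PS}(\YYYY(X), \{\WWWW, \YYYY\DDDD\}_{\bi})$ whenever either side makes sense, and then to observe that this is exactly what preservation and reflection of bilimits require.

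First I would set up the pointwise computation of bilimits in $[\aaa^{\op}, \CAT]_{PS}$. Since $\CAT$ is bicategorically complete, evaluation $\mathrm{ev}_X : [\aaa^{\op}, \CAT]_{PS} \to \CAT$ preserves bilimits, so for any pseudofunctor $\DDDD : \sss \to \aaa$ and weight $\WWWW : \sss \to \CAT$ one has a pseudonatural equivalence
$$\{\WWWW, \YYYY\DDDD\}_{\bi}(X) \;\simeq\; \{\WWWW, \YYYY\DDDD(-)(X)\}_{\bi} \;=\; \{\WWWW, \aaa(X, \DDDD-)\}_{\bi} \;\simeq\; [\sss, \CAT]_{PS}(\WWWW, \aaa(X, \DDDD-)),$$
where the last equivalence is the bicategorical completeness of $\CAT$ recalled just before the statement. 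Moreover this string is pseudonatural in $X \in \aaa^{\op}$, which I would verify by tracking the universal cones.

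For preservation, suppose $\{\WWWW, \DDDD\}_{\bi}$ exists in $\aaa$. The defining pseudonatural equivalence of the bilimit gives
$$\aaa(X, \{\WWWW, \DDDD\}_{\bi}) \;\simeq\; [\sss, \CAT]_{PS}(\WWWW, \aaa(X, \DDDD-))$$
pseudonaturally in $X$. Composing with the string above produces a pseudonatural equivalence $\YYYY(\{\WWWW, \DDDD\}_{\bi})(X) \simeq \{\WWWW, \YYYY\DDDD\}_{\bi}(X)$, and by Lemma \ref{rep} this assembles into a pseudonatural equivalence in $[\aaa^{\op}, \CAT]_{PS}$; the fact that it comes from the bilimit cone under $\YYYY$ shows that this cone is itself universal, so $\YYYY$ preserves the bilimit. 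For reflection, suppose an object $L$ of $\aaa$ together with a cone $\YYYY(L) \to \YYYY\DDDD$ in $[\aaa^{\op}, \CAT]_{PS}$ exhibits $\YYYY(L)$ as $\{\WWWW, \YYYY\DDDD\}_{\bi}$. Evaluating at $X$ and using the string above yields a pseudonatural equivalence
$$\aaa(X, L) \;=\; \YYYY(L)(X) \;\simeq\; [\sss, \CAT]_{PS}(\WWWW, \aaa(X, \DDDD-)),$$
which is precisely the definition of $L$ being the bilimit $\{\WWWW, \DDDD\}_{\bi}$ in $\aaa$.

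The only real subtlety — and the step I would pay most attention to — is checking that the equivalences in the displayed string, and in particular the pointwise-bilimit identification $\{\WWWW, \YYYY\DDDD\}_{\bi}(X) \simeq \{\WWWW, \aaa(X,\DDDD-)\}_{\bi}$, are genuinely pseudonatural in $X$, so that the resulting equivalence is one of pseudofunctors $\aaa^{\op} \to \CAT$ rather than just a family of equivalences of categories. Once that is in place, the whole argument is a transcription of the enriched Yoneda preservation argument into the language of bilimits, using Lemma \ref{Yoneda} and Lemma \ref{rep} to turn pointwise equivalences into equivalences of pseudofunctors.
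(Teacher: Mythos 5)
Your argument is correct and is essentially the paper's own proof, only written out in more detail: both reduce the claim to the defining equivalence $\aaa(X,\{\WWWW,\DDDD\}_{\bi})\simeq[\sss,\CAT]_{PS}(\WWWW,\aaa(X,\DDDD-))\simeq\{\WWWW,\aaa(X,\DDDD-)\}_{\bi}$ together with the pointwise computation of weighted bilimits in $[\aaa^{\op},\CAT]_{PS}$. The pseudonaturality-in-$X$ concern you flag is exactly what the paper's phrase ``pointwise construction of weighted bilimits'' silently absorbs, so your sketch fills in rather than deviates from the paper's route.
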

\begin{proof}
By definition, a weighted bilimit $\left\{ \WWWW, \DDDD \right\} _ {\bi } $ exists if and only if, for each object $X$ of $\aaa $, 
$$\aaa (X, \left\{ \WWWW, \DDDD \right\} _ {\bi })\simeq  \left[ \aaa, \CAT\right] _ {PS}(\WWWW, \aaa (X, \DDDD - ))\simeq \left\{ \WWWW, \aaa (X, \DDDD - ) \right\} _ {\bi }.$$
By the pointwise construction of weighted bilimits, this means that $\left\{ \WWWW, \DDDD \right\} _ {\bi } $ exists if and only if
$\YYYY\left\{ \WWWW, \DDDD \right\} _{\bi }\simeq \left\{ \WWWW, \YYYY\circ\DDDD \right\} _{\bi }$.
This proves that $\YYYY $ reflects and preserves weighted bilimits.
\end{proof}

\begin{rem}
Let $\sss $ be a small $2$-category and $\DDDD : \sss\to\aaa $ be a pseudofunctor. Consider the pseudofunctor   
$$
[\sss, \ccc ]_ {PS} \to  [ \aaa  ^{\op },  \CAT ]_{PS} :\mbox{ }\WWWW  \mapsto   \mathbbmss{D}_\WWWW  $$ 
in which the $2$-functor $\mathbbmss{D}_\WWWW$ is given by  $X \mapsto   [\sss , \CAT ]_{PS}(\WWWW, \aaa (X, \mathcal{D}-) ) $. 
By Lemma \ref{rep}, we conclude that it is possible to get a pseudofunctor $\left\{ -, \mathcal{D} \right\} _ {\bi} $ defined in a full sub-$2$-category of $[\sss , \CAT ]_{PS}$ of weights $\WWWW:\sss\to\CAT $ such that $\aaa$ has the bilimit $\left\{ \WWWW,\mathcal{D}\right\} _ {\bi }$. 
\end{rem}

\section{Descent Objects}\label{Descent}

In this section, we describe the $2$-categorical limits called \textit{descent objects}. We need both constructions, strict descent objects and descent objects \cite{RS87}. Our domain $2$-category, denoted by $\Delta $, is the dual of that defined at Definition 2.1 in \cite{CME}.

\begin{defi}\label{Presentation}
We denote by $\dot{\Delta } $ the $2$-category generated by the diagram
\[\xymatrix{  \mathsf{0} \ar[rr]^-d && \mathsf{1}\ar@<1.7 ex>[rrr]^-{d^0 }\ar@<-1.7ex>[rrr]_-{d^1 } &&& \mathsf{2}\ar[lll]|-{s^0}
\ar@<1.7 ex>[rrr]^{\partial ^0 }\ar[rrr]|-{\partial ^1}\ar@<-1.7ex>[rrr]_{\partial ^2} &&& \mathsf{3} }\] 
with the invertible $2$-cells:
\small
\begin{eqnarray*}
\sigma_{ik} &:&  \partial ^k d ^i\cong \partial^{i}d^{k-1} ,
                    \hspace{1mm}\mbox{if  }\hspace{1mm} i<k \\
 n_0        &:&  s^0d^0\cong \id _{{}_\mathsf{1}}  \\
 n_1        &:&   \id _{{}_\mathsf{1}}\cong s^{0}d^{1}\\
 \vartheta        &:&  d^1d\cong d^0d
\end{eqnarray*}
\normalsize
satisfying the equations below: 
\begin{itemize}
\item Associativity:
\small
$$\xymatrix{  \mathsf{0}\ar[r]^-{d}\ar[d]_-{d}\ar@{}[rd]|-{\xRightarrow{\hskip .1em \vartheta \hskip .1em } }&
\mathsf{1}\ar[d]|-{d^0}\ar[r]^-{d^0}\ar@{}[rd]|-{\xRightarrow{\hskip .1em \sigma_{01} \hskip .1em }}&
\mathsf{2}\ar[d]^-{\partial ^0}\ar@{}[rrdd]|-{=}&
&
\mathsf{3}\ar@{}[rd]|{\xRightarrow{\hskip .1em \sigma_{02} \hskip .1em }}&
\mathsf{2}\ar[l]_{\partial ^0}\ar@{}[rdd]|{\xRightarrow{\hskip .1em \vartheta \hskip .1em }}\ar@{=}[r]&
\mathsf{2}
\\
\mathsf{1}\ar[r]|-{d^1}\ar[d]_-{d^1}\ar@{}[rrd]|-{\xRightarrow{\hskip .1em \sigma _ {12} \hskip .1em }}&
\mathsf{2}\ar[r]|-{\partial ^1}&
\mathsf{3}\ar[d]^-{\id _ {{}_\mathsf{3}}}&
&
\mathsf{2}\ar@{}[rd]|-{\xRightarrow{\hskip .1em \vartheta \hskip .1em }}\ar[u]^-{\partial ^2}&
\mathsf{1}\ar[l]|-{d^0}\ar[u]|-{d^1}&
\\
\mathsf{2}\ar[rr]_{\partial ^2}&&
\mathsf{3}&
&
\mathsf{1}\ar[u]^-{d^1}&
\mathsf{0}\ar[l]^{d}\ar[u]|-{d}\ar[r]_ {d}&
\mathsf{1}\ar[uu]_{d^0}   
}$$
\normalsize
\item Identity:
\small
$$\xymatrix{  \mathsf{0}     \ar[rr]^{d} 
                    \ar[dd]_{d} 
                      && 
              {\mathsf{1}}  \ar[dd]_{d^1}
                    \ar@{=}@/^4ex/[dddr]
                    \ar@{}[dddr]|{\xLeftarrow{n_1}} &&&
        \mathsf{0}    \ar@/_3ex/[ddd]_{d}
                    \ar@{}[ddd]|=
                    \ar@/^3ex/[ddd]^{d}
                                        \\ 
          & \xLeftarrow{\hskip .1em \vartheta \hskip .1em }  && &=                            
                                        \\
              {\mathsf{1}}  \ar[rr]_{d^0}
                    \ar@{=}@/_4ex/[drrr]
                    \ar@{}[drrr]|{\xLeftarrow{n_0}} &&
              {\mathsf{2}}\ar[dr]|{s^0}  \\                                            
          &&& {\mathsf{1}} && {\mathsf{1}} }$$ 
\normalsize					
\end{itemize}
The $2$-category $\Delta $ is, herein, the full sub-$2$-category of $\dot{\Delta } $ with objects $\mathsf{1}, \mathsf{2}, \mathsf{3}$. We denote the inclusion by $\j: \Delta\to\dot{\Delta } $.
\end{defi}

\begin{rem}
In fact, the $2$-category $\dot{\Delta } $ is the locally preordered $2$-category freely generated by the diagram and $2$-cells described above. Moreover, $\Delta $ is the $2$-category freely generated by the corresponding diagram and the $2$-cells $\sigma_{01}, \sigma_{02}, \sigma_{12}, n_0, n_1 $.
\end{rem}

Let $\aaa $ be a $2$-category and $\AAAA : \Delta\to \aaa $ be a $2$-functor. If the weighted bilimit $\left\{ \dot{\Delta } (\mathsf{0}, \j -), \AAAA\right\} _ {\bi } $ exists, we say that  $\left\{ \dot{\Delta } (\mathsf{0}, \j -), \AAAA\right\} _ {\bi } $ is the \textit{descent object} of $\AAAA $. Analogously, when it exists, we call the (strict) weighted $2$-limit $\left\{ \dot{\Delta } (\mathsf{0}, \j -), \AAAA\right\} $ the \textit{strict descent object} of $\AAAA $.

Assuming that $\DDDD : \dot{\Delta }\to\aaa $ is a pseudofunctor, we have a pseudonatural transformation $\dot{\Delta } (\mathsf{0}, \j - )\longrightarrow \aaa(\DDDD\mathsf{0}, \DDDD\circ\j - )$ given by the evaluation of $\DDDD $. By the definition of weighted bilimit, if $\DDDD\circ\j $ has a descent object,  this pseudonatural transformation induces a comparison $1$-cell $$\DDDD\mathsf{0}\to\left\{ \dot{\Delta } (\mathsf{0}, \j -), \DDDD\circ\j\right\} _ {\bi }. $$
Analogously, if $\DDDD $ is a $2$-functor, we get a comparison $\DDDD\mathsf{0}\to\left\{ \dot{\Delta } (\mathsf{0}, \j -), \DDDD\circ\j\right\} $, provided that the strict descent object of $\DDDD\circ\j $ exists.

\begin{defi}[Effective Descent Diagrams]
We say that a $2$-functor $\DDDD : \dot{\Delta }\to \aaa $ is of \textit{effective descent} if $\aaa $ has the descent object of $\DDDD\circ\j $ and the comparison $\displaystyle\DDDD\mathsf{0}\to \left\{ \dot{\Delta } (\mathsf{0}, \j -), \DDDD\circ\j\right\} _ {\bi } $
is an equivalence. 

We say that $\DDDD $ is of \textit{strict descent} if $\aaa$ has the strict descent object of $\DDDD\circ\j $ and the comparison
$\DDDD\mathsf{0}\to \left\{ \dot{\Delta } (\mathsf{0}, \j -), \DDDD\circ\j\right\} $
is an isomorphism.
\end{defi}
 
\begin{lem}\label{strictequivalence}
Strict descent objects are descent objects. Thereby, strict descent diagrams are of effective descent as well.

Also, if $\aaa $ has strict descent objects, a $2$-functor $\DDDD : \dot{\Delta }\to \aaa $ is of effective descent if and only if the comparison
$\DDDD\mathsf{0}\to \left\{ \dot{\Delta } (\mathsf{0}, \j -), \DDDD\circ\j\right\} $
is an equivalence.
\end{lem}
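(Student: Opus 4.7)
The plan is to reduce both claims to a single observation: for any $2$-functor $\mathcal{B}:\Delta\to\CAT$, the canonical inclusion
$$\iota_{\mathcal{B}}:[\Delta,\CAT](\dot{\Delta}(\mathsf{0},\j-),\mathcal{B})\hookrightarrow[\Delta,\CAT]_{PS}(\dot{\Delta}(\mathsf{0},\j-),\mathcal{B})$$
is an equivalence of categories, pseudonatural in $\mathcal{B}$. Granting this, if $L$ is the strict descent object of $\DDDD\circ\j$, I would post-compose the defining $2$-natural isomorphism $\aaa(-,L)\cong[\Delta,\CAT](\dot{\Delta}(\mathsf{0},\j-),\aaa(-,\DDDD\circ\j-))$ with $\iota$ to obtain a pseudonatural equivalence exhibiting $L$ as the descent object. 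The strict comparison $1$-cell $\DDDD\mathsf{0}\to L$ is induced by the same evaluation of $\DDDD$ as the bicategorical comparison, so a strict descent diagram is in particular of effective descent, which proves part (i).

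To establish the key observation, I would unpack transformations out of the descent weight. Modifications in both the strict and pseudo settings amount to the same data -- a family of $2$-cells in $\mathcal{B}$ satisfying a single compatibility condition -- so $\iota_{\mathcal{B}}$ is fully faithful. For essential surjectivity, a pseudonatural $\alpha:\dot{\Delta}(\mathsf{0},\j-)\to\mathcal{B}$ is determined by $b:=\alpha_{\mathsf{1}}(d)\in\mathcal{B}(\mathsf{1})$ together with invertible pseudonaturality $2$-cells at the generating morphisms $d^{0},d^{1},s^{0},\partial^{0},\partial^{1},\partial^{2}$ of Definition \ref{Presentation}. I would construct a strict $\alpha'$ isomorphic to $\alpha$ by replacing each $\alpha_{k}(\xi)$ with $\mathcal{B}(\xi)b$ via these isomorphisms, and then check that the coherence constraints built from $\sigma_{ik},n_{0},n_{1},\vartheta$ transfer across; the resulting isomorphism $\alpha\cong\alpha'$ is the modification assembled from the strictification $2$-cells. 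This is the standard flexibility argument for the descent weight.

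Part (ii) then follows at once. Under the hypothesis that $\aaa$ has strict descent objects, the strict descent object $L$ of $\DDDD\circ\j$ exists and by part (i) also serves as the descent object; the two comparisons $\DDDD\mathsf{0}\to L$ match under the canonical equivalence between the presentations, so one is an equivalence exactly when the other is. The principal obstacle is the strictification step in the middle paragraph: although it reflects the well-known flexibility of descent as a $2$-categorical limit, carrying it out by hand amounts to verifying the associativity and identity equations of Definition \ref{Presentation} against the pseudonaturality coherence. Everything else is bookkeeping against the universal properties already recorded in Section \ref{Bilimits}.
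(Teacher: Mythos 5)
The paper states this lemma without proof, treating it as standard, so there is nothing to compare against line by line; judged on its own, your argument is correct and is the standard one. The reduction of both parts to the single claim that
$\iota_{\mathcal{B}}:[\Delta,\CAT](\dot{\Delta}(\mathsf{0},\j-),\mathcal{B})\to[\Delta,\CAT]_{PS}(\dot{\Delta}(\mathsf{0},\j-),\mathcal{B})$
is an equivalence, $2$-naturally in $\mathcal{B}$, is exactly right: it makes the strict descent object a birepresentation of the pseudofunctor defining the bilimit, and since both comparison $1$-cells are induced by the same evaluation pseudonatural transformation $\dot{\Delta}(\mathsf{0},\j-)\longrightarrow\aaa(\DDDD\mathsf{0},\DDDD\circ\j-)$, an isomorphism comparison is in particular an equivalence comparison, which gives part (i), and part (ii) follows since the two comparisons are identified under the canonical equivalence of the two universal objects. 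Your full-faithfulness observation is also correct, because the modification condition against $2$-natural source and target is literally the same in both hom-categories. The one place where you defer work is essential surjectivity of $\iota_{\mathcal{B}}$; the strictification you sketch (transporting $\alpha$ along its own pseudonaturality isomorphisms to a $2$-natural $\alpha'$ and checking the equations of Definition \ref{Presentation}) does go through, but you could discharge it without any by-hand coherence check by invoking that the descent weight presents a PIE limit: the strict descent object is an iterated inserter-and-equifier over $\mathcal{B}\mathsf{1}$, $\mathcal{B}\mathsf{2}$, $\mathcal{B}\mathsf{3}$ (compare the explicit description in Remark \ref{strictdescentstrict}), PIE weights are flexible, and for flexible weights the inclusion of strict into pseudo transformations is an equivalence. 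Either route is acceptable; the citation is shorter and is presumably why the paper omits the proof altogether.
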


\begin{lem}\label{triiviall}
Assume that $\AAAA , \BBBB , \DDDD : \dot{\Delta } \to \aaa $ are $2$-functors. If there are a $2$-natural isomorphism $\AAAA\longrightarrow\BBBB $ and a pseudonatural equivalence $\BBBB\longrightarrow \DDDD $, then
\begin{itemize}
  \item $\AAAA$ is of strict descent if and only if $\BBBB $ is of strict descent;
	\item $\BBBB $ is of effective descent if and only if $\DDDD $ is of effective descent.
\end{itemize}	
\end{lem}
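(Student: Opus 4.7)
The plan is to handle both equivalences by passing through the comparison $1$-cells that define effective/strict descent. Let $\WWWW := \dot{\Delta}(\mathsf{0}, \j-)$. First I would restrict the given transformations to $\Delta$ via $\j$: the $2$-natural isomorphism $\alpha:\AAAA\to\BBBB$ restricts to a $2$-natural isomorphism $\alpha\j:\AAAA\circ\j\to\BBBB\circ\j$, and the pseudonatural equivalence $\beta:\BBBB\to\DDDD$ restricts to a pseudonatural equivalence $\beta\j:\BBBB\circ\j\to\DDDD\circ\j$. The components at $\mathsf{0}$, namely $\alpha_{\mathsf{0}}:\AAAA\mathsf{0}\to\BBBB\mathsf{0}$ and $\beta_{\mathsf{0}}:\BBBB\mathsf{0}\to\DDDD\mathsf{0}$, are respectively an isomorphism and an equivalence.

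Next I would use the pointwise/representable construction of weighted (bi)limits. Because $\aaa(X, \alpha\j-):\aaa(X, \AAAA\circ\j-)\to \aaa(X, \BBBB\circ\j-)$ is a $2$-natural isomorphism of $2$-functors $\Delta\to\CAT$ natural in $X$, post-composition induces a $2$-natural isomorphism of the represented $2$-functors $[\Delta,\CAT]_{PS}(\WWWW, \aaa(-,\AAAA\circ\j-))\cong [\Delta,\CAT]_{PS}(\WWWW, \aaa(-,\BBBB\circ\j-))$. Thus one of these is $2$-representable (i.e., the strict descent object exists) iff the other is, and a strict representation of the first is sent to one of the second; in other words $\{\WWWW, \AAAA\circ\j\}\cong \{\WWWW, \BBBB\circ\j\}$ when either side exists. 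The analogous argument with birepresentations, in combination with Lemma~\ref{rep}, shows that $\aaa$ has $\{\WWWW,\BBBB\circ\j\}_{\bi}$ iff it has $\{\WWWW,\DDDD\circ\j\}_{\bi}$, and that the two are equivalent via the map induced by $\beta\j$.

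Now I would put this together with the comparison $1$-cells. By construction of the comparisons as the images, under (bi)representability, of the evaluation pseudonatural transformations $\dot\Delta(\mathsf{0},\j-)\to \aaa(\AAAA\mathsf{0},\AAAA\circ\j-)$ etc., the squares
\[\xymatrix{\AAAA\mathsf{0}\ar[r]\ar[d]_-{\alpha_{\mathsf{0}}}^-{\cong} & \{\WWWW,\AAAA\circ\j\}\ar[d]^-{\cong}\\ \BBBB\mathsfs{0}\ar[r] & \{\WWWW,\BBBB\circ\j\}} \qquad \xymatrix{\BBBB\mathsf{0}\ar[r]\ar[d]_-{\beta_{\mathsf{0}}}^-{\simeq} & \{\WWWW,\BBBB\circ\j\}_{\bi}\ar[d]^-{\simeq}\\ \DDDD\mathsf{0}\ar[r] & \{\WWWW,\DDDD\circ\j\}_{\bi}}\]
commute strictly in the first case and commute up to an invertible $2$-cell in the second (this is just the naturality of the comparison in the diagram-variable, read off from the definition of the induced map on (bi)limits from a (pseudo)natural transformation). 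A two-out-of-three argument then finishes both equivalences: in the left square, three of the four arrows are isomorphisms iff the fourth is; in the right square, three of the four arrows are equivalences iff the fourth is.

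The main obstacle is really just the bookkeeping of paragraph two, namely showing that applying $\{\WWWW,-\}$ (respectively $\{\WWWW,-\}_{\bi}$) to $\alpha\j$ and $\beta\j$ indeed yields an isomorphism (resp.\ an equivalence) compatible with the comparison maps. For the strict case this is immediate from the strict $2$-universal property. For the bicategorical case, one has to invoke Lemma~\ref{rep} (to turn the pointwise birepresentations into an actual pseudofunctor in the diagram variable) and then appeal to the bicategorical Yoneda Lemma~\ref{Yoneda} to see that an equivalence of pseudofunctors on $\aaa^{\op}$ comes from an equivalence in $\aaa$; once this machinery is in place the two-out-of-three step is formal.
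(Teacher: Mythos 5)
The paper offers no proof of this lemma at all (it is stated as evident), so there is no argument of the author's to compare against; your proof is correct and is exactly the justification one would expect — restrict along $\j$, transport the (bi)representability of the weighted (bi)limit along the induced isomorphism/equivalence of represented $2$-functors, check compatibility with the evaluation-induced comparison $1$-cells, and finish by two-out-of-three. One small correction: in the strict case the represented $2$-functor is $X\mapsto[\Delta,\CAT](\WWWW,\aaa(X,\AAAA\circ\j-))$ with \emph{strict} $2$-natural transformations, as in the paper's definition of $\left\{\WWWW,-\right\}$, not $[\Delta,\CAT]_{PS}$; this does not affect your argument, since whiskering with the $2$-natural isomorphism $\alpha\j$ is invertible on either hom-$2$-category.
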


We say that an effective descent diagram $\DDDD :\dot{\Delta }\to\bbb $ is \textit{preserved} by a pseudofunctor $L:\bbb\to\ccc $ if $L\circ \DDDD $ is of effective descent. Also, $\DDDD :\dot{\Delta }\to\bbb $ is said to be  an \textit{absolute effective descent} diagram if $ L\circ\DDDD $ is of effective descent  for any pseudofunctor $L$. 

In this setting, a pseudofunctor $L:\bbb\to\ccc $ is said to \textit{reflect absolute effective descent diagrams} if, whenever a $2$-functor $\DDDD:\dot{\Delta }\to\bbb $ is such that $L\circ \DDDD $ is an absolute effective descent diagram, $\DDDD $ is of effective descent.  
Moreover, we say herein that a pseudofunctor $L:\bbb\to\ccc $ \textit{creates absolute effective descent diagrams} if $L$ reflects absolute effective descent diagrams and, whenever a diagram $\AAAA : \Delta \to \bbb $ is such that $L\circ \AAAA \simeq \DDDD\circ\j $ for some absolute effective descent diagram $\DDDD : \dot{\Delta } \to \ccc $, there is a diagram $\BBBB : \dot{\Delta }\to\bbb $ such that $L\circ\BBBB \simeq \DDDD $ and $\BBBB\circ\j = \AAAA $.

Recall that right $2$-adjoints preserve strict descent diagrams and right biadjoints preserve effective descent diagrams. Also, the usual (enriched) Yoneda embedding $\aaa\to\left[ \aaa ^{\op }, \CAT\right]$ preserves and reflects strict descent diagrams, and, from Lemma \ref{Yonedaargument},  we get:

\begin{lem}\label{Yonchange}
The Yoneda embedding 
$\YYYY : \aaa\to\left[ \aaa ^{\op }, \CAT\right] _ {PS} $
preserves and reflects effective descent diagrams. 
\end{lem}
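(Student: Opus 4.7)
The proof hinges on two facts already established: Lemma \ref{Yonedaargument}, which says $\YYYY$ preserves and reflects weighted bilimits, and Lemma \ref{Yoneda}, which says $\YYYY$ is locally an equivalence. The plan is to reduce the statement about effective descent to these two properties, using that effective descent is defined by the requirement that a specific canonical $1$-cell into a specific weighted bilimit be an equivalence.

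First I would unpack the definitions. Given a $2$-functor $\DDDD : \dot{\Delta } \to \aaa$, effective descent means that $\left\{ \dot{\Delta }(\mathsf{0}, \j - ),\, \DDDD\circ\j\right\} _ {\bi }$ exists in $\aaa$ and the comparison $c_\DDDD : \DDDD\mathsf{0}\to \left\{ \dot{\Delta }(\mathsf{0}, \j - ),\, \DDDD\circ\j\right\} _ {\bi }$ is an equivalence. For $\YYYY\circ\DDDD :\dot{\Delta }\to [\aaa ^{\op },\CAT ]_{PS}$, the analogous comparison $c_{\YYYY\circ\DDDD}$ always has a target (since $[\aaa ^{\op },\CAT ]_{PS}$ is bicategorically complete), and by Lemma \ref{Yonedaargument} we have a canonical equivalence $\YYYY\left\{ \dot{\Delta }(\mathsf{0}, \j - ),\, \DDDD\circ\j\right\} _ {\bi }\simeq \left\{ \dot{\Delta }(\mathsf{0}, \j - ),\, \YYYY\circ\DDDD\circ\j\right\} _ {\bi }$ whenever the left-hand bilimit exists. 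A universality argument (the comparison is defined by evaluating the weight at $\mathsf{0}$) shows that under this equivalence, $c_{\YYYY\circ\DDDD}$ is identified with $\YYYY(c_\DDDD)$.

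For preservation, suppose $\DDDD$ is of effective descent. Then the descent object exists in $\aaa$, so by Lemma \ref{Yonedaargument} it is also computed in $[\aaa ^{\op },\CAT ]_{PS}$ via $\YYYY$. Since $c_\DDDD$ is an equivalence and $\YYYY$, being a pseudofunctor, preserves equivalences, $\YYYY(c_\DDDD)$ is an equivalence, hence so is $c_{\YYYY\circ\DDDD}$; thus $\YYYY\circ\DDDD$ is of effective descent.

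For reflection, suppose $\YYYY\circ\DDDD$ is of effective descent. Then the bilimit $\left\{ \dot{\Delta }(\mathsf{0}, \j - ),\, \YYYY\circ\DDDD\circ\j\right\} _ {\bi }$ is equivalent to $\YYYY\DDDD\mathsf{0}$ via $c_{\YYYY\circ\DDDD}$, so the reflection clause of Lemma \ref{Yonedaargument} guarantees that $\left\{ \dot{\Delta }(\mathsf{0}, \j - ),\, \DDDD\circ\j\right\} _ {\bi }$ exists in $\aaa$ with value $\DDDD\mathsf{0}$. It remains to check that $c_\DDDD$ itself is an equivalence. This is where the main subtlety lies, and it is resolved by Lemma \ref{Yoneda}: since $\YYYY$ is locally an equivalence (hence locally fully faithful and locally essentially surjective), it reflects equivalences; explicitly, a pseudo-inverse of $\YYYY(c_\DDDD)$ can be lifted through $\YYYY_{\DDDD\mathsf{0},\,-}$ to a pseudo-inverse of $c_\DDDD$, so $c_\DDDD$ is an equivalence and $\DDDD$ is of effective descent. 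I expect the only mildly technical step to be the identification of $c_{\YYYY\circ\DDDD}$ with $\YYYY(c_\DDDD)$ up to the canonical comparison equivalence, which is a direct application of the universal property defining the comparison $1$-cell.
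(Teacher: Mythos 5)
Your proof is correct and follows essentially the same route as the paper, which derives the lemma directly from Lemma \ref{Yonedaargument} (preservation and reflection of weighted bilimits) and leaves the identification of the comparison $1$-cells and the reflection of equivalences via Lemma \ref{Yoneda} implicit. Your write-up simply fills in those details in the intended way.
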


\begin{rem}\label{codescent}
The dual notion of descent object is that of codescent object, described by Lack~\cite{SLACK2002} and Le Creurer, Marmolejo, Vitale~\cite{CME}. It is, of course,  the descent object in the opposite $2$-category.
\end{rem}

\begin{rem}\label{strictdescentstrict}
The $2$-category $\CAT $ is $\CAT$-complete. In particular, $\CAT $ has strict descent objects. More precisely, if 
$\AAAA : \Delta\to\CAT $
is a $2$-functor, then
$$\left\{ \dot{\Delta }(\mathsf{0}, \j -), \AAAA \right\} \cong \left[ \Delta , \CAT\right] \left( \dot{\Delta }(\mathsf{0}, \j -), \AAAA \right) .$$
Thereby, we can describe  the strict descent object of $\AAAA :\Delta\to\CAT $ explicitly as follows:

\begin{enumerate}
\item Objects are $2$-natural transformations $\mathsf{f}: \dot{\Delta }(\mathsf{0}, \j -)\longrightarrow \AAAA $.
 We have a bijective correspondence between such $2$-natural transformations and pairs $(f, \varrho _ {{}_{\mathsf{f}}})$ in which $f$ is an object of $ \AAAA\mathsf{1} $ and $\varrho _ {{}_{\mathsf{f}}}: \AAAA (d^1)f\to\AAAA (d^0)f $ is an isomorphism in $ \AAAA\mathsf{2} $ satisfying the following equations:	
\begin{itemize}
\item Associativity:
\small
$$\left(\AAAA (\partial ^0)(\varrho _ {{}_{\mathsf{f} }} )\right) \left( \AAAA (\sigma _ {{}_{02}}) _ {{}_{f}}\right)\left(\AAAA (\partial ^2)(\varrho _ {{}_{\mathsf{f}}} )\right)\left(\AAAA (\sigma _ {{}_{12}} ) ^{-1}_ {{}_f}\right) = \left(\AAAA (\sigma _ {{}_{01}}) _ {{}_{f}}\right)\left(\AAAA(\partial ^1)(\varrho _ {{}_{\mathsf{f}}})\right)   $$
\normalsize
\item Identity:
\small
$$\left(\AAAA(n_0) _ {{}_f}\right)\left(\AAAA(s^0) (\varrho _ {{}_{\mathsf{f}}}) \right)\left(\AAAA(n_1) _ {{}_f}\right) = \id _ {{}_{f}} $$
\normalsize
\end{itemize}
If $\mathsf{f}: \dot{\Delta }(\mathsf{0}, \j -)\longrightarrow \AAAA $
 is a $2$-natural transformation, we get such pair by the correspondence 
$\mathsf{f}\mapsto (\mathsf{f} _ {{}_{\mathsf{1} }}(d), \mathsf{f} _ {{}_{\mathsf{2} }}(\vartheta )) $.

\item The morphisms are modifications. In other words, a morphism $\mathsf{m} : \mathsf{f}\to\mathsf{h} $ is determined by a morphism $\mathfrak{m}: f\to h $ such that $\AAAA (d^0)(\mathfrak{m} )\varrho _ {{}_\mathsf{f}} =  \varrho _ {{}_h}\AAAA (d^1)(\mathfrak{m} ) $.
\end{enumerate}
\end{rem}

\section{Biadjoint Triangles}\label{Main}
In this section, we give our main theorem on biadjoint triangles, Theorem \ref{MAIN}, and its strict version, Theorem \ref{MAINstrict}. Let $L: \bbb\to \ccc $ and $U: \ccc\to\bbb $ be pseudofunctors, and
$(L\dashv U, \eta , \varepsilon , s, t)$ be a biadjunction. We denote by $\chi : \ccc(L-, -)\simeq \bbb (-,U-)$ its associated pseudonatural equivalence as described in Remark \ref{addd}. 

\begin{defi}\label{maindiagram}							
In this setting, for every pair $(X,Y)$ of objects  of $\bbb $, we have an induced diagram $\DDDD_Y^X :\dot{\Delta }\to \CAT $ 
\small
\begin{equation*}\tag{$\DDDD_Y^X$}
\xymatrixcolsep{1.7pc}\xymatrix{ \bbb (X, Y) \ar[d]|-{L_{{}_{X,Y}}}\\  \ccc (LX, LY)\ar@<-2.5ex>[rrr]_-{\ccc(LX, L(\eta _{{}_Y} ))}\ar@<2.5ex>[rrr]^-{L_{{}_{{}_{X,ULY}}}\circ\hspace{0.2em} \chi_ {{}_{{}_{(X,LY)}}}} &&& \ccc (LX, LULY )\ar[lll]|-{\ccc (LX,\varepsilon _ {{}_{LY}})}
\ar@<-2.5 ex>[rrrr]_-{\ccc (LX, LUL(\eta _{{}_Y} ))}\ar[rrrr]|-{\ccc (LX, L(\eta _ {{}_{ULY}}))}\ar@<2.5ex>[rrrr]^-{L_{{}_{{}_{X,(UL)^2Y}}}\circ \hspace{0.2em}\chi_ {{}_{{}_{(X,LULY)}}}} &&&& \ccc (LX, L(UL)^2Y) }
\end{equation*}
\normalsize
in which the images of the $2$-cells of $\dot{\Delta } $ by $\DDDD_Y^X: \dot{\Delta }\to\CAT $ are defined as:   
\begin{eqnarray*}
\begin{aligned}
&\DDDD_Y^X(\vartheta ) _{{}_g}      & : = &  L\left(\eta _ {{}_g}^{-1}\right)\cdot \llll _{{}_{\eta _ {{}_Y} g}} \\
&\DDDD _Y^X (\sigma _{12}) _{{}_f}     &    : =& \left(L\eta\right) _ {{}_{\eta _ {{}_Y} }}\ast \id _ {{}_f}\\
&\DDDD _Y ^X(n_1)_{{}_f} &: = &s_{{}_Y}\ast \id _ {{}_f} 
\end{aligned}
\qquad
\begin{aligned} 
&\DDDD_Y^X (\sigma _{01})_{{}_f}              &  : = &  \llll _ {{}_{UL(U(f)\eta _ {{}_{X}} )\eta _ {{}_{X}}}}\cdot \left( L\eta \right) _ {{}_{U(f)\eta _ {{}_{X}} }}^{-1} 
 \\  
&\DDDD_Y^X(\sigma _{02}) _{{}_f}               &: = &   L\left(\uuuu _ {{}_{L(\eta _ {{}_Y}) f }}\ast \id _ {{}_{\eta _ {{}_X}}}\right)\cdot \llll _ {{}_{UL(\eta _ {{}_Y})L(U(f)\eta _ {{}_X})}}\\
&\DDDD_Y^X(n_0)_{{}_f} &: = &\left( \id _ {{}_f}\ast s_ {{}_X} ^{-1}  \right)\cdot \left( \varepsilon _ {{}_f} ^{-1}\ast \id _ {{}_{\eta _ {{}_X}}}\right)\cdot \left( \id _ {{}_{\varepsilon _ {{}_{LY}}}}\ast \llll _ {{}_ {U(f)\eta _ {{}_X}}}^{-1}\right)   
\end{aligned}
\end{eqnarray*}  
\end{defi}
We claim that \textit{$\DDDD^X_Y $ is well defined.} In fact, by the axioms of naturality and associativity of Definition~\ref{pseudonaturaltransformation} (of pseudonatural transformation), for every morphism $g\in \bbb(X,Y) $, we have the equality
\small
$$\xymatrix{  LX\ar[d]_{L(\eta _ {{}_{X}})}\ar[r]^{L(g)}\ar@{}[rd]|{\xLeftarrow{\gamma }} &
LY \ar[d]^{L(\eta _ {{}_{Y}})}\ar[rd]^{L(\eta _ {{}_{Y}})} &
&
LX\ar[d]_{L(\eta _ {{}_{X}}) }\ar[r]^{L(g)}\ar[rd]|{L(\eta _ {{}_{X}} )}\ar@{}[rrd]|{\xLeftarrow{\gamma } }\ar@{}[rdd]|{\xLeftarrow{\left( L\eta  \right)_ {{}_{\eta _ {{}_X}}}^{-1}}}&
LY\ar[rd]^{L(\eta _ {{}_Y})}&  
\\
LULX\ar[r]_{LUL(g)}\ar[rd]_{LUL(\eta _ {{}_{X}} )}\ar@{}[rrd]|{\xLeftarrow{\widehat{LU(\gamma )} }}&
LULY\ar[rd]|{LUL(\eta _ {{}_{Y}}) }\ar@{}[r]|{\xLeftarrow{\left( L\eta  \right)_ {{}_{\eta _ {{}_Y}}}^{-1} }}&
LULY\ar[d]^{L(\eta _{{}_{ULY}})}\ar@{}[r]|{=}&
LULX\ar[rd]_{LUL(\eta _ {{}_X}) } &
LULX\ar[d]|{L(\eta _ {{} _ {ULX}})}\ar[r]^{LUL(g)}\ar@{}[rd]|{\xLeftarrow{ (L\eta )_ {{}_ {UL(g)}}^{-1} } }&
LULY\ar[d] ^{L(\eta _ {{}_ {ULY}})} 
\\
&
LULULX\ar[r]_{LULUL(g)}&
LULULY &
&
LULULX \ar[r]_{LULUL(g)}&
LULULY  }$$ 
\normalsize
in which 
\small
\begin{eqnarray*}
\begin{aligned}
\gamma  &: = \llll _{{}_{UL(g)\eta _ {{}_X}}}^{-1}\cdot\DDDD_Y^X(\vartheta ) _{{}_g} = (L\eta ) _ {{}_g} ^{-1} 
\end{aligned}
\qquad\qquad
\begin{aligned}
\widehat{LU(\gamma )} &: = (\llll\uuuu) _ {{}_{LUL(g)L(\eta _{{}_{X}})}}^{-1}\cdot LU(\gamma )\cdot (\llll\uuuu) _ {{}_{L(\eta _{{}_{X}})L(g)}}\\
\end{aligned}
\end{eqnarray*}
\normalsize
By the definition of $\DDDD_Y^X $ given above, this is the same as saying that the equation 
\tiny
$$\xymatrix{ 
\DDDD_Y^X \mathsf{3}\ar@{=}[rr]\ar@{}[rrdd]|{\xRightarrow{\DDDD_Y^X(\sigma _ {12})^{-1}}}&&
\DDDD_Y^X \mathsf{3}\ar@{}[rd]|{\xRightarrow{\DDDD_Y^X(\sigma _ {02})}}&
\DDDD_Y^X \mathsf{2}\ar@{=}[r]\ar@{}[rdd]|{\xRightarrow{\DDDD_Y^X(\vartheta )}}\ar[l]_{\DDDD_Y^X(\partial ^0)}&
\DDDD_Y^X \mathsf{2}\ar@{}[rdd]|{=}
&
\DDDD_Y^X \mathsf{0}\ar[r]^{\DDDD ^X_Y (d) }\ar[dd]|{\DDDD _Y^X (d) }\ar@{}[rdd]|{\xRightarrow{\DDDD^X_Y(\vartheta) }}&
\DDDD_Y^X \mathsf{1}\ar[dd]|{\DDDD_Y^X(d^0)}\ar[rr]^{\DDDD_Y^X(d^0)}\ar@{}[rrdd]|{\xRightarrow{\DDDD_Y^X(\sigma _ {01})}}&&
\DDDD_Y^X \mathsf{2}\ar[dd]|{\DDDD_Y^X(\partial ^0 )}
\\
&&
\DDDD_Y^X \mathsf{2}\ar@{}[rd]|{\xRightarrow{\DDDD_Y^X(\vartheta)}}\ar[u]^{\DDDD_Y^X(\partial ^2)}&
\DDDD_Y^X \mathsf{1}\ar[l]^{\DDDD_Y^X(d^0)}\ar[u]_{\DDDD_Y^X(d^1)}&
&&&&
\\
\DDDD_Y^X \mathsf{2}\ar[uu]|{\DDDD_Y^X(\partial ^1) }&&
\DDDD_Y^X \mathsf{1} \ar[ll]^{\DDDD_Y^X(d^1) }\ar[u]^{\DDDD_Y^X(d^1)}&
\DDDD_Y^X \mathsf{0} \ar[l]^{\DDDD_Y^X(d)}\ar[u]_{\DDDD_Y^X(d)}\ar[r]_{\DDDD_Y^X(d)}&
\DDDD_Y^X \mathsf{1}\ar[uu]|{\DDDD_Y^X (d^0) } 
&
\DDDD_Y^X \mathsf{1} \ar[r]_{\DDDD_Y^X(d^1)}&
\DDDD_Y^X \mathsf{2} \ar[rr]_{\DDDD_Y^X(\partial ^1) }&&
\DDDD_Y^X \mathsf{3} 
 }$$ 
\normalsize
holds, which is equivalent to the usual equation of associativity given in Definition \ref{Presentation}. Also, by the naturality of the modification $s : \id _{L} \Longrightarrow (\varepsilon L)  (L\eta)$ (see Definition~\ref{Modfication}), for every morphism $g\in\bbb (X,Y)$, the pasting of $2$-cells 
\small
$$\xymatrix{ 
LX\ar[rrrr]^{L(g)}\ar[rd]|{L(\eta _ {{}_X})}\ar@{=}[dd]
&&&&
LY\ar[ld]|{L(\eta _ {{}_Y}) }\ar@{=}[dd]\\
&
LULX\ar@{}[l]|{\xLeftarrow{s_{{}_X} ^{-1}}}\ar[rr]|{LUL(g)}\ar[ld]|{\varepsilon _ {{}_{LX}} }
&
\ar@{}[u]|{\xLeftarrow{(L\eta)_{{}_g}^{-1}}}\ar@{}[d]|{\xLeftarrow{(\varepsilon L) _ {{}_g}^{-1}}}
&
LULY\ar@{}[r]|{\xLeftarrow{s_{{}_Y}} }\ar[rd]|{\varepsilon _ {{}_{LY}}}
&
\\
LX\ar[rrrr]_{L(g)}
&&&&
LY  
}$$
\normalsize
is equal to the identity $L(g)\Rightarrow L(g) $ in $\ccc $. This is equivalent to say that 
\tiny
$$\xymatrix{  \DDDD_Y^X\mathsf{0}     \ar[rr]^-{\DDDD_Y^X(d)} 
                    \ar[dd]|-{\DDDD_Y^X(d)} 
                      && 
              {\DDDD_Y^X\mathsf{1}}  \ar[dd]|-{\DDDD_Y^X(d^1)}
                    \ar@{=}@/^7ex/[dddr]
                    \ar@{}[dddr]|-{\xLeftarrow{\DDDD_Y^X(n_1)}} &&&
       \DDDD_Y^X\mathsf{0}    \ar@/_3ex/[ddd]_-{\DDDD_Y^X(d)}
                    \ar@{}[ddd]|=
                    \ar@/^3ex/[ddd]^-{\DDDD_Y^X(d)}
                                        \\ 
          & \xLeftarrow{\hskip .1em \DDDD_Y^X(\vartheta )}  && &=                            
                                        \\
              {\DDDD_Y^X\mathsf{1}}  \ar[rr]|{\DDDD_Y^X(d^0)}
                    \ar@{=}@/_7ex/[drrr]
                    \ar@{}[drrr]|{\xLeftarrow{\DDDD_Y^X(n_0)}} &&
              {\DDDD_Y^X\mathsf{2}}\ar[dr]|-{\DDDD_Y^X(s^0)}  \\                                            
          &&& {\DDDD_Y^X\mathsf{1}} && {\DDDD_Y^X\mathsf{1}} }$$ 
\normalsize
holds,
which is the usual identity equation of Definition \ref{Presentation}. Thereby it completes the proof that indeed $\DDDD _ Y^X $ is well defined.

As in the enriched case, we also need to consider another special $2$-functor induced by a biadjoint triangle.

\begin{defi}\label{trianglediagram}
Let $(E\dashv R, \rho , \mu , v,w)$ and $(L\dashv U, \eta , \varepsilon , s,t)$ be biadjunctions such that the triangle
$$\xymatrix{  \aaa\ar[rr]^-{J}\ar[dr]_-{E}&&\bbb\ar[dl]^-{L}\\
&\ccc  & }$$
is commutative.
In this setting, for each object $Y$ of $\bbb $, we define the $2$-functor
$\AAAA _ Y : \Delta\to \aaa  $
\small
\begin{equation*}\tag{$\AAAA_Y$}
\xymatrix{  RLY\ar@<-2.5ex>[rrrr]_-{RL(\eta _{{}_Y})}\ar@<2.5ex>[rrrr]^-{RL(U(\mu _{{}_{LY}})\eta _{{}_{JRLY}})\rho _ {{}_{RLY}}} &&&&  RLULY \ar[llll]|-{R(\varepsilon _{{}_{LY}})}
\ar@<-2.5 ex>[rrrrr]_-{RLUL(\eta_{{}_{Y}})}\ar[rrrrr]|-{RL(\eta_{{}_{ULY}})}\ar@<2.5ex>[rrrrr]^-{ RL(U(\mu _{{}_{LULY}})\eta _ {{}_{JRLULY}})\rho _{{}_{RLULY}}} &&&&& RLULULY }
\end{equation*}
\normalsize
in which
\small
\begin{eqnarray*}
\AAAA _Y (\sigma _{12})        &: =& (RL\eta)_{{}_{\eta _ {{}_{Y}} }}\\
\AAAA _Y (n_1) &: =& \rrrr _ {{}_{\varepsilon _ {{}_{LY}}\cdot L(\eta _ {{}_{Y}}) }}^{-1} R(s_ {{}_{Y}})\cdot \rrrr _ {{}_{LY}}\\
\AAAA _Y (n_0)        &: =& 
\left(w_{{}_{LY}}\right)\\
&&\cdot\left(\id _ {{}_{R(\mu _ {{}_{LY}} )}}\ast \left( \rrrr _ {{}_{ERLY}}^{-1}\cdot R(s _ {{}_{JRLY}}^{-1})\cdot \rrrr _ {{}_{\varepsilon _ {{}_{ERLY}}L(\eta _ {{}_{JRLY}}) }}\right)\cdot \id _ {{}_{\rho _ {{}_{RLY}} }}\right)\\
&&\cdot
\left( (R\varepsilon )^{-1}_ {{}_{\mu _ {{}_{LY}} }}\ast \id _ {{}_{RL(\eta _ {{}_{JRLY}})\rho _ {{}_{RLY}} }}\right)\\
&&\cdot 
\left( \id _ {{}_{R(\varepsilon _ {{}_{LY}}) }}\ast (\rrrr\llll ) ^{-1}_ {{}_{U(\mu _ {{}_{LY}})\eta _ {{}_{JRLY}}  }}\ast \id _ {{}_{\rho _ {{}_{RLY}} }}\right)
\end{eqnarray*}
\begin{eqnarray*}
\AAAA _Y (\sigma _{02})        &: =& 
\left( (\rrrr\llll)_{{}_{ U(\mu _ {{}_{RLULY}})\eta _ {{}_{JRLULY}} }}\ast \id _ {{}_{\rho _ {{}_{RLULY}} RL(\eta _ {{}_{Y}}) }}\right)\cdot 
\left( \left(RLU\mu L\right)\left(RL\eta J RL\right)\left( \rho RL\right)\right) _ {{}_{\eta _ {{}_{Y}} }}\\
&&\cdot 
\left( \id _ {{}_{RLUL(\eta _ {{}_{Y}}) }}\ast (\rrrr\llll ) ^{-1} _ {{}_{U(\mu _ {{}_{LY}})\eta _ {{}_{JRLY}} }}\ast \id _ {{}_{\rho _ {{}_{RLY}} }}\right)   
\\
\AAAA _Y (\sigma _{01})        &: =& 
\left( (\rrrr\llll )_{{}_{U(\mu _{{}_{LULY}})\eta _ {{}_{JRLULY}} }}\ast\rho _{{}_{RL(U(\mu _{{}_{LY}})\eta _ {{}_{JRLY}}) }}\ast \id _ {{}_{\rho _{{}_{RLY}} }}\right)\\
&&
\cdot   
\left(\left( (RLU\mu L)(RL\eta JRL)\right) _ {{}_{U(\mu _{{}_{LY}})\eta _ {{}_{JRLY}} }}\ast \rho _ {{}_{\rho _{{}_{RLY}} }}\right)\\
&&
\cdot 
\left( \id _ {{}_{RLUL(U(\mu _{{}_{LY}})\eta _ {{}_{JRLY}})RLU(\mu _{{}_{ERLY}}) }}\ast \left( RL\eta J\right) _{{}_{\rho _{{}_{RLY}}}}\ast \id _ {{}_{\rho _{{}_{RLY}}}}\right)\\
&&\cdot \left(  \id _ {{}_{RLUL(U(\mu _{{}_{LY}})\eta _ {{}_{JRLY}}) }}\ast  \left((\rrrr\llll\uuuu )^{-1}_{{}_{\mu _ {{}_{ERLY}}E(\rho _ {{}_{RLY}})}}\cdot RLU(v_{{}_{RLY}})\cdot (\rrrr\llll\uuuu )_ {{}_{ERLY}}\right)\ast \id _ {{}_{RL(\eta _ {{}_{JRLY}})\rho _ {{}_{RLY}} }}\right)\\
&&\cdot \left(\left( RL\eta\right) ^{-1}_{{}_{U(\mu _ {{}_{LY}})\eta _ {{}_{JRLY}}  }}\ast \id _ {{}_{\rho _ {{}_{RLY}} }}\right)
\end{eqnarray*} 
\normalsize
\end{defi}

\begin{theo}[Biadjoint Triangle]\label{MAIN}
Let $(E\dashv R, \rho , \mu , v,w)$ and $(L\dashv U, \eta , \varepsilon , s,t)$ be biadjunctions such that 
$$\xymatrix{  \aaa\ar[rr]^-{J}\ar[dr]_-{E}&&\bbb\ar[dl]^-{L}\\
&\ccc  & }$$
is a commutative triangle of pseudofunctors.
Assume that, for each pair of objects $(Y\in\bbb , A\in\aaa )$, the $2$-functor 
\small
\begin{equation*}\tag{$\DDDD_Y^{JA}$}
\xymatrix{ \bbb (JA, Y) \ar[d]|-{L_{{}_{JA,Y}}}\\  \ccc (LJA, LY)\ar@<-2.5ex>[rrr]_-{\ccc(LJA, L(\eta _{{}_Y} ))}\ar@<2.5ex>[rrr]^-{L_{{}_{{}_{JA,ULY}}}\circ\hspace{0.2em} \chi_ {{}_{{}_{(JA,LY)}}}} &&& \ccc (LJA, LULY )\ar[lll]|-{\ccc (LJA,\varepsilon _ {{}_{LY}})}
\ar@<-2.5 ex>[rrr]_-{\ccc (LJA, LUL(\eta _{{}_Y} ))}\ar[rrr]|-{\ccc (LJA, L(\eta _ {{}_{ULY}}))}\ar@<2.5ex>[rrr]^-{L_{{}_{{}_{JA,(UL)^2Y}}}\circ \hspace{0.2em}\chi_ {{}_{{}_{(JA,LULY)}}}} &&& \ccc (LJA, L(UL)^2Y) }
\end{equation*}
\normalsize
is of effective descent. The pseudofunctor $J$ has a right biadjoint if and only if, for every object $Y$ of $\bbb $, the descent object of the diagram 
$\AAAA _Y: \Delta\to \aaa$
\normalsize
exists in $\aaa$. In this case, $J$ is left biadjoint to $G$, defined by
$GY:= \left\{ \dot{\Delta } (\mathsf{0}, \j - ), \AAAA _Y\right\} _{\bi }.$
\end{theo}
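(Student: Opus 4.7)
The plan is to mimic the strategy of Proposition \ref{D1}, replacing $V$-equalizers with descent objects and $V$-natural isomorphisms with pseudonatural equivalences. By Lemma \ref{rep}, $J$ admits a right biadjoint precisely when, for each object $Y$ of $\bbb$, the pseudofunctor $\bbb(J-,Y) \colon \aaa^{\op} \to \CAT$ is birepresentable; moreover, such birepresenting objects then assemble automatically into a pseudofunctor $G$. Hence it suffices to describe $\bbb(JA,Y)$, pseudonaturally in $A$, as a descent object computed inside $\aaa$.

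For a fixed pair $(A,Y)$, the effective descent hypothesis on $\DDDD_Y^{JA}$, read inside $\CAT$, exhibits $\bbb(JA,Y)$ as the descent object $\{\dot{\Delta}(\mathsf{0},\j -),\, \DDDD_Y^{JA}\circ\j\}_{\bi}$. The substantive step is to produce a pseudonatural equivalence of $2$-functors $\Delta \to \CAT$,
\[
\DDDD_Y^{JA}\circ\j \;\simeq\; \aaa(A,\AAAA_Y -),
\]
pseudonatural also in $A$. On objects this is immediate from the hom-equivalences $\ccc(EA,-)\simeq \aaa(A,R-)$ induced by $E\dashv R$, combined with the identity $LJ = E$: these yield $\ccc(LJA,LY)\simeq \aaa(A, RLY)$, $\ccc(LJA, LULY)\simeq \aaa(A, RLULY)$, and $\ccc(LJA, LULULY)\simeq \aaa(A, RLULULY)$.

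The heart of the argument, and the main obstacle, is checking that on $1$-cells and $2$-cells the mates of the structural data of $\DDDD_Y^{JA}\circ\j$ under these hom-equivalences are \emph{exactly} the $1$-cells of $\AAAA_Y$ together with the $2$-cells $\AAAA_Y(\sigma_{ij})$, $\AAAA_Y(n_i)$ of Definition \ref{trianglediagram}. This explains the apparently baroque formulas: the unit $\rho$, counit $\mu$, and the invertible triangulators $v,w$ of $E\dashv R$ appear precisely as the obstructions to naively transporting pastings involving $\eta$, $\varepsilon$, $s$, $t$ across the hom-equivalence. Verifying the pseudonaturality axioms is then a lengthy but routine mate calculation, dual in spirit to the ``$\DDDD_Y^X$ is well defined'' check already carried out.

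Granted this equivalence, the conclusion is a Yoneda-style manipulation. Assume first that $GY := \{\dot{\Delta}(\mathsf{0},\j -), \AAAA_Y\}_{\bi}$ exists in $\aaa$ for each $Y$. By definition of weighted bilimit and the pointwise computation of $\CAT$-valued bilimits,
\[
\aaa(A,GY) \;\simeq\; \{\dot{\Delta}(\mathsf{0},\j -),\, \aaa(A,\AAAA_Y -)\}_{\bi} \;\simeq\; \{\dot{\Delta}(\mathsf{0},\j -),\, \DDDD_Y^{JA}\circ\j\}_{\bi} \;\simeq\; \bbb(JA,Y),
\]
pseudonatural in $A$, and Lemma \ref{rep} delivers the right biadjoint $G$. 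Conversely, if $J \dashv G$, then $\aaa(-,GY) \simeq \bbb(J-,Y)$ is the descent object of $\YYYY \circ \AAAA_Y$ inside $[\aaa^{\op},\CAT]_{PS}$; since the Yoneda embedding reflects weighted bilimits (Lemma \ref{Yonedaargument}), it follows that $GY \simeq \{\dot{\Delta}(\mathsf{0},\j -), \AAAA_Y\}_{\bi}$ in $\aaa$, as claimed.
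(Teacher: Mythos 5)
Your proposal is correct and follows essentially the same route as the paper: both transport $\DDDD_Y^{JA}\circ\j$ across the hom-equivalence $\xi:\ccc(E-,-)\simeq\aaa(-,R-)$ to obtain a pseudonatural equivalence with $\aaa(A,\AAAA_Y-)$, then combine the effective descent hypothesis with the fact that the Yoneda embedding preserves and reflects weighted bilimits (Lemma \ref{Yonedaargument}) and Lemma \ref{rep} to conclude. The only difference is one of detail: the paper writes out the explicit components of the pseudonatural equivalence $\psi:\DDDD_Y^{JA}\circ\j\longrightarrow\aaa(A,\AAAA_Y-)$, which you defer to a ``routine mate calculation.''
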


\begin{proof}
We denote by $\xi : \ccc(E-, -)\simeq \aaa (-,R-)$ the pseudonatural equivalence associated to the biadjunction $(E\dashv R, \rho , \mu , v,w)$ (see Remark \ref{addd}). 		
For each object $A$ of $\aaa $ and each object $Y$ of $\bbb $, the components of $\xi $ induce a pseudonatural equivalence
$$\psi : \DDDD_Y^{JA}\circ\j \longrightarrow \aaa(A, \AAAA _Y -) $$
in which 
\begin{eqnarray*}
\psi _ {{}_{\mathsf{1}}} &:=& \xi _ {{}_{(A, LY)}}:  \ccc(EA, LY)\to \aaa (A, RLY) \\
\psi _ {{}_{\mathsf{2}}} &:=&  \xi _ {{}_{(A, LULY)}}: \ccc(EA, LULY)\to \aaa (A, RLULY) \\
\psi _ {{}_{\mathsf{3}}} &:=&  \xi _ {{}_{(A, LULULY)}}: \ccc(EA, LULULY)\to \aaa (A, RLULULY)
\end{eqnarray*}
\small
\begin{equation*}
\begin{aligned}
\left(\psi _ {{}_{s^0}}\right)_{{}_f} &:=& \rrrr_{{}_{\varepsilon _ {{}_{LY}}f}}\ast\id_{{}_{\rho _ {{}_A}}}\\
\left(\psi _ {{}_{d^1}}\right)_{{}_f} &:=& \rrrr_{{}_{L(\eta _ {{}_{Y}})f}}\ast\id_{{}_{\rho _ {{}_A}}}\\
\end{aligned}
\qquad\qquad\qquad
\begin{aligned}
\left(\psi _ {{}_{\partial ^1}}\right) _ {{}_f} &:=& \rrrr_{{}_{L(\eta _ {{}_{ULY}})f}}\ast\id_{{}_{\rho _ {{}_A}}} \\
\left( \psi _ {{}_{\partial ^2}}\right) _ {{}_f} &:=& \rrrr_{{}_{LUL(\eta _ {{}_{Y}})f}}\ast\id_{{}_{\rho _ {{}_A}}} 
\end{aligned}
\end{equation*}
\begin{eqnarray*}
\left(\psi _ {{}_{d^0}}\right)_{{}_f} &:=& \left( (\rrrr\llll )_ {{}_{U(f)\eta _ {{}_{JA}}}}\ast \id _ {{}_{\rho _ {{}_{A}}}}\right)\\
&&\cdot 
\left( \id _ {{}_{RLU(f)}}\ast\left((\rrrr\llll\uuuu )_ {{}_{EA}}^{-1}\cdot RLU(v_{{}_A}^{-1})\cdot (\rrrr\llll\uuuu )_{{}_{\mu _ {{}_{EA}}E(\rho _ {{}_{A}})}}\right)\ast \id _ {{}_{RLU(\eta _ {{}_{JA}})\rho _ {{}_{A}}}}\right)
\\
&&\cdot 
\left( \id _ {{}_{RLU(f)RL(\mu _ {{}_{EA})}}}\ast \left((RL\eta J) \rho \right)_ {{}_{\rho _ {{}_{A}}}}^{-1}\right)
\cdot \left(\left( (RLU\mu )(RL\eta JR)(\rho R)\right) ^{-1}_{{}_f}\ast \id _ {{}_{\rho _ {{}_{A}}}}\right)\\
&&
\cdot\left((\rrrr\llll )^{-1}_{{}_{U(\mu _ {{}_{LY})\eta _ {{}_{JRLY}} }}}\ast \id _ {{}_{\rho _ {{}_{RLY}}R(f)\rho _ {{}_{A}}}}\right) 
\\
\left(\psi _ {{}_{\partial ^0}}\right)_{{}_f} &:=&  
\left( (\rrrr\llll )_ {{}_{U(f)\eta _ {{}_{JA}}}}\ast \id _ {{}_{\rho _ {{}_{A}}}}\right)
\\
&&\cdot 
\left( \id _ {{}_{RLU(f)}}\ast \left((\rrrr\llll\uuuu )_ {{}_{EA}}^{-1}\cdot RLU(v_{{}_A}^{-1})\cdot (\rrrr\llll\uuuu )_{{}_{\mu _ {{}_{EA}}E(\rho _ {{}_{A}})}}\right)\ast \id _ {{}_{RLU(\eta _ {{}_{JA}})\rho _ {{}_{A}}}}\right)
\\
&&\cdot 
\left( \id _ {{}_{RLU(f)RL(\mu _ {{}_{EA})}}}\ast \left((RL\eta J) \rho \right)_ {{}_{\rho _ {{}_{A}}}}^{-1}\right)
\cdot \left(\left( (RLU\mu )(RL\eta JR)(\rho R)\right) ^{-1}_{{}_f}\ast \id _ {{}_{\rho _ {{}_{A}}}}\right)\\
&&
\cdot\left((\rrrr\llll )^{-1}_{{}_{U(\mu _ {{}_{LULY})\eta _ {{}_{JRLULY}} }}}\ast \id _ {{}_{\rho _ {{}_{RLULY}}R(f)\rho _ {{}_{A}}}}\right)
\end{eqnarray*}
\normalsize
First of all, we assume that $\DDDD_Y^{JA}$ is of effective descent for every object $A$ of $\aaa $ and every object $Y$ of $\bbb $. Then the descent object of $\DDDD_Y^{JA}\circ\j\simeq\aaa(A, \AAAA _Y -) $ is  $\DDDD_Y^{JA}\mathsf{0}$. Moreover, since this is true for all objects $A$ of $\aaa $, we conclude that the descent object of $ \YYYY\circ\AAAA _Y $ is
$\ccc(J-,Y): \aaa ^{\op}\to  \CAT $.

If, furthermore, $\aaa $ has the descent object of $\AAAA _ Y $, we get that $\YYYY \left\{ \dot{\Delta } (\mathsf{0}, \j - ), \AAAA _Y\right\} _{\bi } $ is also a descent object of $\YYYY\circ\AAAA _Y $. Therefore we get a pseudonatural equivalence
$$\ccc \left(J-, Y\right)\simeq \aaa \left( -, \left\{ \dot{\Delta } (\mathsf{0}, \j - ), \AAAA _Y\right\} _{\bi }\right) .$$
This proves that $J$ is left biadjoint to $G$, provided that  the descent object of $\AAAA _ Y $ exists for every object $Y$ of $\bbb $. 

Reciprocally, if $J$ is left biadjoint to a pseudofunctor $G$, since $\ccc (-,GY)\simeq \ccc (J-, Y) $ is the descent object of $\aaa(-, \AAAA _Y -) $, we conclude that $GY$ is the descent object of $\AAAA _ Y $.			
\end{proof}

We establish below the obvious dual version of Theorem \ref{MAIN}, which is the relevant theorem to the usual context of pseudopremonadicity~\cite{CME}. 
For being able to give such dual version, we have to employ the observations given in Remark \ref{codescent} on codescent objects. Also, if $(L\dashv U, \eta , \varepsilon , s,t)$ is a biadjunction, we need to consider its associated pseudonatural equivalence $\tau : \ccc(-, U-)\to \bbb (L-,-)$. In particular,
$$\tau _ {{}_{(X,Z)}}: \ccc(X, UZ)\to \bbb (LX,Z):\qquad\qquad f      \mapsto \varepsilon _{{}_Z}L(f);\qquad \mathfrak{m}\mapsto \id _ {{}_{\varepsilon _{{}_Z} }}\ast L(\mathfrak{m}) $$

\begin{theo}[Biadjoint Triangle]\label{dualMAIN}
Let $(E\dashv R, \rho , \mu , v,w)$ and $(L\dashv U, \eta , \varepsilon , s,t)$ be biadjunctions such that
$$\xymatrix{  \aaa\ar[rr]^-{J}\ar[dr]_-{R}&&\bbb\ar[dl]^-{U}\\
&\ccc  & }$$
is a commutative triangle of pseudofunctors.
Assume that, for each pair of objects $(Y\in\bbb , A\in\aaa )$, the $2$-functor 
$$\dot{\Delta }\to \CAT $$
\small
$$\xymatrix{ \bbb (Y, JA) \ar[d]|-{U_{{}_{Y,JA}}}\\  \ccc (UY, UJA)\ar@<2.5ex>[rrr]^-{\ccc(U(\varepsilon _{{}_A} ), UJA)}\ar@<-2.5ex>[rrr]_-{U_{{}_{{}_{LUY,JA}}}\circ\hspace{0.2em} \tau_ {{}_{{}_{(UY,JA)}}}} &&& \ccc (ULUY, UJA )\ar[lll]|-{\ccc (\eta _ {{}_{UY}}, UJA)}
\ar@<2.5 ex>[rrrr]^-{\ccc (ULU(\varepsilon _{{}_Y}),UJA)}\ar[rrrr]|-{\ccc (U(\varepsilon _{{}_{LUY}}),UJA)}\ar@<-2.5ex>[rrrr]_-{U_{{}_{{}_{(LU)^2Y, JA}}}\circ \hspace{0.2em}\tau_ {{}_{{}_{(ULUY, JA)}}}} &&&& \ccc (U(LU)^2Y, UJA) }$$
\normalsize
(with omitted $2$-cells) is of effective descent.
We have that $J$ has a left biadjoint if and only if, for every object $Y$ of $\bbb $, $\aaa$ has the codescent object of the diagram (with the obvious $2$-cells)
$$\Delta ^{\op }\to\aaa $$
\small
$$\xymatrix{  EUY\ar[rrrrr]|-{E(\eta _{{}_{UY}})} &&&&&  EULUY \ar@<-2.5ex>[lllll]_-{EU(\varepsilon _{{}_Y})}\ar@<2.5ex>[lllll]^-{\mu _ {{}_{EUY}} EU(\varepsilon _{{}_{JEUY}}L(\rho _{{}_{UY}}))}
&&&&& EULULUY\ar@<-2.5 ex>[lllll]_-{EULU(\varepsilon_{{}_{Y}})}\ar[lllll]|-{EU(\varepsilon_{{}_{LUY}})}\ar@<2.5ex>[lllll]^-{ \mu _{{}_{EULUY}} EU(\varepsilon _ {{}_{JEULUY}} L(\rho _{{}_{ULUY}}))}  }$$
\normalsize
\end{theo}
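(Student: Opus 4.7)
The plan is to derive Theorem~\ref{dualMAIN} from Theorem~\ref{MAIN} by passing to opposite $2$-categories. Replacing $\aaa, \bbb, \ccc$ by their opposites converts the biadjunctions $E\dashv R$ and $L\dashv U$ into biadjunctions $R^{\op}\dashv E^{\op}$ and $U^{\op}\dashv L^{\op}$, in which $R^{\op}$ and $U^{\op}$ now play the role of left biadjoints, and the commutative triangle $UJ\simeq R$ becomes $U^{\op}J^{\op}\simeq R^{\op}$. Thus in $(\aaa^{\op}, \bbb^{\op}, \ccc^{\op})$ we are in precisely the situation of Theorem~\ref{MAIN}, with the triple $(J, E, L)$ of pseudofunctors replaced by $(J^{\op}, R^{\op}, U^{\op})$ and the right biadjoints $(R,U)$ replaced by $(E^{\op}, L^{\op})$.

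Next I would verify that the effective descent hypotheses match. Under the canonical identification $\bbb^{\op}(JA,Y)=\bbb(Y,JA)$, the diagram $\DDDD_{Y}^{JA}$ of Definition~\ref{maindiagram}, built from the biadjunction $U^{\op}\dashv L^{\op}$ in the opposite setting, becomes exactly the $\CAT$-valued diagram displayed in the hypothesis of Theorem~\ref{dualMAIN}: the pseudonatural equivalence $\chi$ of the dualized biadjunction is the $\tau$ of the original, the unit $\eta$ of $U^{\op}\dashv L^{\op}$ is the counit $\varepsilon$ of $L\dashv U$, and so on. With this translation in place, Theorem~\ref{MAIN} applied in the opposite yields that $J^{\op}$ has a right biadjoint if and only if, for each object $Y$ of $\bbb$, the descent object in $\aaa^{\op}$ of the suitably dualized $\AAAA_{Y}:\Delta\to\aaa^{\op}$ of Definition~\ref{trianglediagram} exists.

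Finally, by Remark~\ref{codescent} a descent object in $\aaa^{\op}$ is a codescent object in $\aaa$, and a right biadjoint of $J^{\op}$ is the same data as a left biadjoint of $J$. Tracing Definition~\ref{trianglediagram} through the same dualization --- each composite $RL$ becomes $EU$, each unit $\eta$ becomes a counit $\varepsilon$, and the unit/counit $(\rho,\mu)$ of $E\dashv R$ plays the role previously played by $(\eta,\varepsilon)$ of $L\dashv U$ --- produces precisely the $\Delta^{\op}\to\aaa$ diagram written in the statement. The main obstacle is bookkeeping rather than conceptual: one has to be careful about which invertible $2$-cells flip direction under $(-)^{\op}$ and to check that the dual explicit formulas for the structural $2$-cells of the codescent diagram are well defined. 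That verification is parallel to the one carried out after Definition~\ref{maindiagram} in the proof of Theorem~\ref{MAIN}, so no new techniques are needed; alternatively, one could simply write the dual formulas explicitly and repeat the associativity/identity diagram chase verbatim.
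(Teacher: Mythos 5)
Your proposal is correct and is exactly the argument the paper intends: Theorem \ref{dualMAIN} is stated as the ``obvious dual version'' of Theorem \ref{MAIN}, obtained by passing to opposite $2$-categories so that $E\dashv R$ and $L\dashv U$ become $R^{\op}\dashv E^{\op}$ and $U^{\op}\dashv L^{\op}$, with $\tau$ playing the role of $\chi$ and codescent objects (Remark \ref{codescent}) replacing descent objects. The paper omits the bookkeeping you describe, so your write-up simply makes explicit the same dualization.
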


\subsection{Strict Version}\label{StrictVersion}

The techniques employed  to prove strict versions of Theorem \ref{MAIN} are virtually the same. We just need to repeat the same constructions, but, now, by means of strict descent objects and $2$-adjoints. For instance, we have:

\begin{theo}[Strict Biadjoint Triangle]\label{MAINstrict}
Let $(L\dashv U, \eta , \varepsilon , s, t)$ be a biadjunction between $2$-functors and $(E\dashv R, \rho , \mu ) $ be a $2$-adjunction such that
the triangle of $2$-functors
$$\xymatrix{  \aaa\ar[rr]^-{J}\ar[dr]_-{E}&&\bbb\ar[dl]^-{L}\\
&\ccc  & }$$
commutes and $\left(\eta J\right): J\longrightarrow UE $ is a $2$-natural transformation.
We assume that, for every pair of objects $(A\in\aaa , Y\in\bbb )$, the diagram 
$\DDDD_Y^{JA}:\dot{\Delta }\to\CAT $ 
induced by $(L\dashv U, \eta , \varepsilon , s, t)$ is of strict descent.
The $2$-functor $J$ has a right $2$-adjoint if and only if, for every object $Y$ of $\bbb $, the strict descent object of $\AAAA _Y: \Delta \to \aaa $  
exists in $\aaa$.
\end{theo}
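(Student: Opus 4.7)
The plan is to mimic the proof of Theorem \ref{MAIN}, but replacing pseudonatural equivalences with $2$-natural isomorphisms and descent objects with strict descent objects throughout. The strict hypotheses---namely, that $L,U,E,J$ are $2$-functors, that $(E\dashv R,\rho,\mu)$ is a $2$-adjunction, and that $\left(\eta J\right)$ is $2$-natural---are exactly what is needed to make the components $\psi_{s^0},\psi_{d^0},\psi_{d^1},\psi_{\partial^0},\psi_{\partial^1},\psi_{\partial^2}$ from the proof of Theorem \ref{MAIN} collapse to identities.

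First, I would construct a $2$-natural isomorphism $\psi : \DDDD_Y^{JA}\circ \j \longrightarrow \aaa(A,\AAAA_Y-)$ whose components at $\mathsf{1},\mathsf{2},\mathsf{3}$ are the natural isomorphisms $\xi_{(A,LY)},\xi_{(A,LULY)},\xi_{(A,LULULY)}$ coming from the $2$-adjunction $E\dashv R$. Since $\xi$ is a $2$-natural isomorphism (and not merely a pseudonatural equivalence) and since $\rrrr,\llll,\uuuu$ and $(\eta J)$ have trivial structure $2$-cells under the strict hypotheses, a direct inspection of the formulas in the proof of Theorem \ref{MAIN} for $\psi_{s^0},\psi_{d^0},\psi_{d^1}$ and $\psi_{\partial^0},\psi_{\partial^1},\psi_{\partial^2}$ shows that each becomes an identity $2$-cell. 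Thus $\psi$ is a (strictly) $2$-natural isomorphism.

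Next, since $\DDDD_Y^{JA}$ is of strict descent by hypothesis, $\bbb(JA,Y)=\DDDD_Y^{JA}\mathsf{0}$ is the strict descent object of $\DDDD_Y^{JA}\circ \j$. By the first item of Lemma \ref{triiviall}, the $2$-natural isomorphism $\psi$ transports this to the statement that $\bbb(JA,Y)$ is the strict descent object of $\aaa(A,\AAAA_Y-)$. This being true for every object $A$ of $\aaa$, the representable $\ccc(J-,Y):\aaa^{\op}\to\CAT$ is the strict descent object of $\YYYY\circ\AAAA_Y$ in $\left[\aaa^{\op},\CAT\right]$, computed pointwise.

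Finally, if $\aaa$ has the strict descent object $GY:=\left\{\dot{\Delta}(\mathsf{0},\j-),\AAAA_Y\right\}$, the Yoneda embedding $\YYYY:\aaa\to\left[\aaa^{\op},\CAT\right]$ (which preserves and reflects strict descent objects, as noted before Lemma \ref{Yonchange}) gives a $2$-natural isomorphism $\aaa(-,GY)\cong \ccc(J-,Y)$, and this is the unit data of a $2$-adjunction $J\dashv G$. Conversely, if $J\dashv G$ is a $2$-adjunction, then $\aaa(-,GY)\cong \ccc(J-,Y)$ is the strict descent object of $\aaa(-,\AAAA_Y-)$, so by reflection $GY$ is the strict descent object of $\AAAA_Y$. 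The principal obstacle is the bookkeeping in the first step: one must verify carefully that each of the six $2$-cell components of $\psi$ truly degenerates to an identity once the strict hypotheses on $L,U,E\dashv R$ and $(\eta J)$ are imposed---this is the place where the pseudo-level coherence data of Theorem \ref{MAIN} collapses, and it is what makes the strict conclusion possible.
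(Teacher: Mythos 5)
Your proposal is correct and follows essentially the same route as the paper's own proof: both reuse the transformation $\psi$ from Theorem \ref{MAIN}, observe that the strict hypotheses force its six structure $2$-cells to be identities so that $\psi$ becomes a $2$-natural isomorphism, and then conclude via the strict descent hypothesis on $\DDDD_Y^{JA}$ and the (strict) Yoneda/representability argument in both directions.
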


\begin{proof}
In particular, we have the setting of Theorem \ref{MAIN}. Therefore, again, we can define 
$\psi : \DDDD_Y^{JA}\circ\j \longrightarrow \aaa(A, \AAAA _Y -) $
as it was done in the proof of Theorem \ref{MAIN}. However, since $(E\dashv R, \rho , \mu ) $ is a $2$-adjunction, $J, E, R, L, U $ are $2$-functors and $(\eta J) $ is a $2$-natural transformation, the components 
$\psi _ {{}_{d^0}}$, $\psi _ {{}_{d^1}}$, $\psi _ {{}_{s^0}}$, $\psi _ {{}_{\partial ^0}}$, $\psi _ {{}_{\partial ^1}}$, $\psi _ {{}_{\partial ^2}} $  
are identities. Thereby $\psi $ is a $2$-natural transformation. 
Moreover, since $(E\dashv R, \rho , \mu ) $ is a $2$-adjunction, $\psi $ is a pointwise isomorphism. Thus it is a $2$-natural isomorphism.

Firstly, we assume that $\DDDD_Y^{JA}$ is of strict descent for every object $A$ of $\aaa $ and every object $Y$ of $\bbb $. Then the strict descent object of $\aaa(A, \AAAA _Y -) $ is  $\DDDD_Y^{JA}\mathsf{0}$. 

If, furthermore, $\aaa $ has the strict descent object of $\AAAA _ Y$, we get a $2$-natural isomorphism
$$\ccc \left(J-, Y\right)\cong \aaa \left( -, \left\{ \dot{\Delta } (\mathsf{0}, \j - ), \AAAA _Y\right\}\right) .$$
This proves that $J$ is left $2$-adjoint, provided that  the strict descent object of $\AAAA _ Y $ exists for every object $Y$ of $\bbb $. 

Reciprocally, if $J$ is left $2$-adjoint to a $2$-functor $G$, since $\ccc (-,GY)\cong \ccc (J-, Y) $ is the strict descent object of $\aaa(-, \AAAA _Y -) $, we conclude that $GY$ is the strict descent object of $\AAAA _ Y $.
\end{proof}

\section{Pseudoprecomonadicity}\label{PRECOMONADIC}
A pseudomonad~\cite{SLACK2000, MARMOLEJO} is the same as a doctrine, whose definition can be found in page 123 of \cite{RS80}, while a pseudocomonad is the dual notion. Similarly to the $1$-dimensional case, for each pseudocomonad $\TTTTT$ on a $2$-category $\ccc $, there is an associated right biadjoint to the forgetful $2$-functor 
$\mathsf{L}: \mathsf{Ps}\textrm{-}\TTTTT\textrm{-}\CoAlg\to \ccc $, 
in which $\mathsf{Ps}$-$\TTTTT\textrm{-}\CoAlg $ is the $2$-category of pseudocoalgebras~\cite{SLACK2000} of Definition \ref{PSEUDOCOALGBRAS}. Also, every biadjunction $(L\dashv U, \eta , \varepsilon , s, t) $ induces a comparison pseudofunctor and an Eilenberg-Moore factorization~\cite{CME}    
\[\xymatrix{\bbb \ar[r]^-{\KKKK}\ar[rd]_-{L}& \mathsf{Ps} \textrm{-} \TTTTT\textrm{-}\CoAlg\ar[d]\\
&\ccc }\] 
in which $\TTTTT $ denotes the induced pseudocomonad.
Before proving Corollary \ref{PMAIN} which is a consequence of Theorem \ref{MAIN} in the context of pseudocomonads, we sketch some basic definitions and known results needed to fix notation and show Lemma \ref{DescentAlgebra}. Some of them are related to the formal theory of pseudo(co)monads developed by Lack~\cite{SLACK2000}. There, it is employed the coherence result of tricategories~\cite{PowerStreet} (and, hence, with due adaptations, the formal theory developed therein works for any tricategory).

\begin{defi}[Pseudocomonad]
A \textit{pseudocomonad}  $\TTTTT = (\TTTTT, \varpi, \varepsilon, \Lambda, \delta , \mathsf{s}  ) $ on a $2$-category $\ccc $ is a pseudofunctor $(\TTTTT , \tttt ) : \ccc\to \ccc $ with
\begin{enumerate}
\item Pseudonatural transformations:
\begin{eqnarray*}
\begin{aligned}
\varpi &:& \TTTTT \longrightarrow \TTTTT ^2
\end{aligned}
\qquad\qquad
\begin{aligned}
\varepsilon &:&\TTTTT\longrightarrow \id _ {{}_{\ccc}}
\end{aligned}
\end{eqnarray*}

\item Invertible modifications:
\begin{eqnarray*}
\Lambda &:&  (\varpi\TTTTT)(\varpi )\Longrightarrow (\TTTTT\varpi )(\varpi )\\
\mathsf{s} &:&(\varepsilon\TTTTT)(\varpi )\Longrightarrow \id _ {{}_{\TTTTT }}\\
\delta &:&\id _ {{}_{\TTTTT }}\Longrightarrow (\TTTTT\varepsilon)(\varpi )
\end{eqnarray*}
\end{enumerate}
such that the following equations hold:

\begin{itemize}
\item Associativity:
\small
$$\xymatrix{  \TTTTT\ar[d]_{\varpi }\ar[r]^{\varpi }\ar@{}[rd]|{\xLeftarrow{\Lambda }} &
\TTTTT ^2 \ar[d]^{\varpi \TTTTT }\ar[rd]^{\varpi \TTTTT } &
&
\TTTTT\ar[d]_{\varpi }\ar[r]^{\varpi }\ar[rd]|{\varpi }\ar@{}[rrd]|{\xLeftarrow{\Lambda } }\ar@{}[rdd]|{\xLeftarrow{\Lambda }}&
\TTTTT ^2\ar[rd]^{\varpi \TTTTT}&  
\\
\TTTTT ^2\ar[r]_{\TTTTT\varpi }\ar[rd]_{\TTTTT\varpi }\ar@{}[rrd]|{\xLeftarrow{\widehat{\TTTTT \Lambda }} }&
\TTTTT ^3\ar[rd]|{\TTTTT\varpi \TTTTT }\ar@{}[r]|{\xLeftarrow{\Lambda\TTTTT }}&
\TTTTT ^3\ar[d]^{\varpi \TTTTT ^2}\ar@{}[r]|{=}&
\TTTTT ^2\ar[rd]_{\TTTTT \varpi } &
\TTTTT ^2\ar[d]|{\varpi \TTTTT}\ar[r]^{\TTTTT\varpi }\ar@{}[rd]|{\xLeftarrow{\varpi _ {{}_{\varpi}}^{-1} } }&
\TTTTT ^3\ar[d] ^{\varpi \TTTTT ^2} 
\\
&
\TTTTT ^3\ar[r]_{\TTTTT ^2\varpi }&
\TTTTT ^4 &
&
\TTTTT ^3 \ar[r]_{\TTTTT ^2\varpi}&
\TTTTT ^4
}$$ 
\normalsize
\item Identity:
\small
$$\xymatrix{ &
\TTTTT \ar[dl]_-{\varpi }\ar[dr]^-{\varpi }\ar@{}[dd]|{\xLeftarrow{\Lambda }}
&
&&
&
\TTTTT \ar[d]|-{\varpi }
&
\\
\TTTTT ^2\ar[dr]_-{\TTTTT\varpi }
&
&
\TTTTT ^2\ar[dl]^-{\varpi\TTTTT }
&&
&\TTTTT ^2\ar[dl]|-{\TTTTT\varpi }\ar[dr]|-{\varpi\TTTTT  }\ar@{=}[dd]
&
\\
&
\TTTTT ^3\ar[d]|-{\TTTTT\varepsilon\TTTTT }                 
&
&=&
\TTTTT ^3\ar@{}[r]|-{\xLeftarrow{\delta\TTTTT }}\ar[dr]|-{\TTTTT\varepsilon\TTTTT }
&&
\TTTTT^3\ar[dl]|-{\TTTTT\varepsilon\TTTTT }\ar@{}[l]|-{\xLeftarrow{{\widehat{\TTTTT\mathsf{s}}} } }
\\
&
\TTTTT ^2
&
&&
&
\TTTTT ^2
&
}$$ 
\end{itemize}
in which $\widehat{\TTTTT\mathsf{s}}, \widehat{\TTTTT \Lambda }  $ denote ``corrections'' of domain and codomain given by the isomorphisms induced by the pseudofunctor $\TTTTT $. That is to say,
\begin{equation*}
\begin{aligned}
\widehat{\TTTTT\mathsf{s}} &: =& \tttt ^{-1} _ {{}_{(\varepsilon\TTTTT ) (\varpi )}}(\TTTTT\mathsf{s} )\tttt _ {{}_{\TTTTT ^2 }}
\end{aligned}
\qquad\qquad
\begin{aligned}
\widehat{\TTTTT \Lambda } &: =& \tttt ^{-1}_{{}_{(\TTTTT \varpi ) (\varpi )}}(\TTTTT \Lambda ) \tttt _ {{}_{(\varpi\TTTTT)(\varpi )}}
\end{aligned}
\end{equation*}  
\end{defi}

\begin{defi}[Pseudocoalgebras]\label{PSEUDOCOALGBRAS}
Let $\TTTTT = (\TTTTT, \varpi, \varepsilon, \Lambda, \delta , \mathsf{s}  ) $ be a pseudocomonad in $\ccc $. We define the objects, $1$-cells and $2$-cells of the $2$-category $\mathsf{Ps}\textrm{-}\TTTTT\textrm{-}\CoAlg $
as follows:
\begin{enumerate}
\item Objects: pseudocoalgebras are defined by $\mathsf{z}= (Z, \varrho _{{}_{\mathsf{z}}}, \varsigma _ {{}_{\mathsf{z}}}, \Omega_{{}_\mathsf{z}} )$ in which $\varrho _{{}_{\mathsf{z}}}: Z\to\TTTTT Z $
is a morphism in $\ccc $ and
\begin{equation*}
\begin{aligned}
\varsigma _ {{}_{\mathsf{z}}}&:& \id _ {{}_{Z}}   \Rightarrow     \varepsilon _ {{}_Z}\varrho _ {{}_{\mathsf{z}}}
\end{aligned}
\qquad\qquad
\begin{aligned}
\Omega _ {{}_{\mathsf{z}}} &:& \varpi _ {{}_{Z}} \varrho _{{}_{\mathsf{z}}}\Rightarrow \TTTTT (\varrho _{{}_{\mathsf{z}}}) \varrho _{{}_{\mathsf{z}}}
\end{aligned}
\end{equation*}
\normalsize
are invertible $2$-cells of $\ccc $
such that the equations
$$\xymatrix{  Z\ar[d]_{\varrho _ {{}_\mathsf{z}} }\ar[r]^{\varrho _ {{}_\mathsf{z}} }\ar@{}[rd]|{\xLeftarrow{\Omega _ {{}_{\mathsf{z}}} }} &
\TTTTT Z \ar[d]^{\varpi _{{}_{Z}} }\ar[rd]^{\varpi _{{}_Z} } &
&
Z\ar[d]_{\varrho_ {{}_{\mathsf{z}}} }\ar[r]^{\varrho _ {{}_{\mathsf{z}}} }\ar[rd]|{\varrho _ {{}_{\mathsf{z}}} }\ar@{}[rrd]|{\xLeftarrow{\Omega _ {{}_{\mathsf{z}}} } }\ar@{}[rdd]|{\xLeftarrow{\Omega _ {{}_{\mathsf{z}}}}}&
\TTTTT Z\ar[rd]^{\varpi _{{}_Z}}&  
\\
\TTTTT Z \ar[r]_{\TTTTT (\varrho _ {{}_{\mathsf{z}}} ) }\ar[rd]_{\TTTTT (\varrho _ {{}_{\mathsf{z}}}) }\ar@{}[rrd]|{\xLeftarrow{\widehat{\TTTTT (\Omega_ {{}_{\mathsf{z}}}) }} }&
\TTTTT ^2 Z\ar[rd]|{(\TTTTT\varpi) _ {{}_{Z}} }\ar@{}[r]|{\xLeftarrow{\Lambda _ {{}_Z} }}&
\TTTTT ^2 Z \ar[d]^{\varpi _ {{}_{ \TTTTT Z}} }\ar@{}[r]|{=}&
\TTTTT Z\ar[rd]_{\TTTTT (\varrho _ {{}_{\mathsf{z}}}) } &
\TTTTT Z \ar[d]|{\varpi _ {{}_{Z}}}\ar[r]^{\TTTTT (\varrho _ {{}_{\mathsf{z}}} ) }\ar@{}[rd]|{\xLeftarrow{\varpi _ {{}_{\varrho _ {{}_{\mathsf{z}}} }}^{-1} } }&
\TTTTT ^2 Z\ar[d] ^{\varpi _{{}_{\TTTTT Z}}} 
\\
&
\TTTTT ^2 Z \ar[r]_{\TTTTT ^2(\varrho _ {{}_{\mathsf{z}}} ) }&
\TTTTT ^3 Z &
&
\TTTTT ^2 Z \ar[r]_{\TTTTT ^2(\varrho_ {{}_{\mathsf{z}}})}&
\TTTTT ^3 Z
}$$ 
\normalsize
\small
$$\xymatrix{ &
Z \ar[dl]_-{\varrho _ {{}_{\mathsf{z}}} }\ar[dr]^-{\varrho _ {{}_{\mathsf{z}}} }\ar@{}[dd]|{\xLeftarrow{\Omega _ {{}_{\mathsf{z}}} }}
&
&&
&
Z \ar[d]|-{\varrho _ {{}_{\mathsf{z}}} }
&
\\
\TTTTT Z\ar[dr]_-{\TTTTT (\varrho _ {{}_{\mathsf{z}}}) }
&
&
\TTTTT Z \ar[dl]^-{\varpi _{{}_{Z}} }
&&
&\TTTTT Z\ar[dl]|-{\TTTTT (\varrho _ {{}_{\mathsf{z}}} ) }\ar[dr]|-{\varpi _ {{}_Z}  }\ar@{=}[dd]
&
\\
&
\TTTTT ^2 Z \ar[d]|-{(\TTTTT\varepsilon ) _ {{}_{Z}} }                 
&
&=&
\TTTTT ^2 Z \ar@{}[r]|-{\xLeftarrow{\widehat{\TTTTT (\varsigma _ {{}_{\mathsf{z}}})}} }\ar[dr]|-{(\TTTTT\varepsilon ) _ {{}_{Z}} }
&&
\TTTTT^2 Z \ar[dl]|-{(\TTTTT \varepsilon ) _{{}_{Z}} }\ar@{}[l]|-{\xLeftarrow{\delta _ {{}_{Z}} } }
\\
&
\TTTTT Z
&
&&
&
\TTTTT Z
&
}$$ 
\normalsize
are satisfied, in which
\small 
\begin{equation*}
\begin{aligned}
\widehat{\TTTTT (\varsigma _ {{}_{\mathsf{z}}})} &:= & \tttt ^{-1} _ {{}_{\varepsilon _ {{}_{Z}} \varrho _ {{}_{\mathsf{z} }}  }} \TTTTT (\varsigma _ {{}_{\mathsf{z}}})\tttt _ {{}_{Z }}
\end{aligned}
\qquad\qquad
\begin{aligned}
\widehat{\TTTTT (\Omega_ {{}_{\mathsf{z}}}) } &:= &\tttt ^{-1} _ {{}_{\varpi _ {{}_{Z}} \varrho _ {{}_{\mathsf{z} }}   }} \TTTTT (\Omega_ {{}_{\mathsf{z}}})\tttt _ {{}_{\TTTTT (\varrho _ {{}_{\mathsf{z}}} )\varrho _ {{}_{\mathsf{z} }} }} 
\end{aligned}
\end{equation*}
\normalsize

\item Morphisms: $\TTTTT$-pseudomorphisms $\mathsf{f}:\mathsf{x}\to\mathsf{z} $ are pairs $\mathsf{f} = (f, \varrho _ {{}_{f}} ^{-1} ) $ in which 
$f: X\to Z  $
is a morphism in $\ccc $ and
$\varrho _ {{}_{\mathsf{f}}}: \TTTTT (f) \varrho _{{}_{\mathsf{x} }} \Rightarrow \varrho _{{}_{\mathsf{z} }}f   $  
is an invertible $2$-cell of $\ccc $ such that, defining $\widehat{\TTTTT (\varrho _ {{}_{\mathsf{f}}} ^{-1})} : = \tttt ^{-1} _ {{}_{\TTTTT (f)  \varrho _ {{}_{\mathsf{x} }} }} \TTTTT (\varrho _ {{}_{\mathsf{f}}} ^{-1})\tttt _ {{}_{\varrho _ {{}_{\mathsf{z}}}f  }}$, 
\small
$$\xymatrix{  X\ar[d]_{\varrho _ {{}_{\mathsf{x}}}}\ar[r]^{f}\ar@{}[rd]|{\xLeftarrow{\varrho _ {{}_{\mathsf{f}}} ^{-1} }} &
Z \ar[d]^{\varrho _ {{}_{\mathsf{z}}}}\ar[rd]^{\varrho _ {{}_{\mathsf{z}}}} &
&
X\ar[d]_{\varrho _ {{}_{\mathsf{x}}} }\ar[r]^{f}\ar[rd]|{\varrho _ {{}_{\mathsf{x}}} }\ar@{}[rrd]|{\xLeftarrow{\varrho _ {{}_{\mathsf{f} }} ^{-1} } }\ar@{}[rdd]|{\xLeftarrow{\Omega _ {{}_{\mathsf{x}}} }}&
Z\ar[rd]^{\varrho_{{}_{\mathsf{z}}} }&  
\\
\TTTTT X\ar[r]_{\TTTTT (f)}\ar[rd]_{\TTTTT(\varrho _ {{}_{\mathsf{x}}} )}\ar@{}[rrd]|{\xLeftarrow{\widehat{\TTTTT (\varrho _ {{}_{\mathsf{f}}} ^{-1})}}}&
\TTTTT Z\ar[rd]|{\TTTTT (\varrho _ {{}_{\mathsf{z}}} ) }\ar@{}[r]|{\xLeftarrow{\Omega _ {{}_{\mathsf{z}}} } }&
\TTTTT Z \ar[d]^{\varpi _ {{}_{Z}} }\ar@{}[r]|{=}&
\TTTTT X \ar[rd]_{\TTTTT (\varrho _ {{}_{\mathsf{x} }} ) } &
\TTTTT X\ar[d]|{\varpi _{{}_{X}} }\ar[r]^{\TTTTT (f)}\ar@{}[rd]|{\xLeftarrow{ \varpi _ {{}_{f}}^{-1}  } }&
\TTTTT Z \ar[d] ^{\varpi _ {{}_{Z}} } 
\\
&
\TTTTT ^2 X\ar[r]_{\TTTTT ^2 (f)}&
\TTTTT ^2 Z &
&
\TTTTT ^2 X \ar[r]_{\TTTTT ^2 (f) }&
\TTTTT^2 Z  
}$$ 
\normalsize
holds and the $2$-cell below is the identity.
\small
$$\xymatrix{ 
X\ar[rrrr]|{f}\ar[rd]|{\varrho _ {{}_{\mathsf{x}}}}\ar@{=}[dd]
&&&&
Z\ar[ld]|{\varrho _ {{}_{\mathsf{z}}} }\ar@{=}[dd]\\
&
\TTTTT X\ar@{}[l]|{\xLeftarrow{\varsigma _ {{}_{\mathsf{x}}} }}\ar[rr]|{\TTTTT (f)}\ar[ld]|{\varepsilon _ {{}_{X}} }
&
\ar@{}[u]|{\xLeftarrow{\varrho _ {{}_{\mathsf{f}}} }}\ar@{}[d]|{\xLeftarrow{(\varepsilon ) _ {{}_f}^{-1}}}
&
\TTTTT Z \ar@{}[r]|{\xLeftarrow{\varsigma _ {{}_{\mathsf{z}}}^{-1}}}\ar[rd]|{\varepsilon _ {{}_{Z}}}
&
\\
X\ar[rrrr]|{f}
&&&&
Y  
}$$
\normalsize
\item $2$-cells: a $\TTTTT $-transformation between $\TTTTT $-pseudomorphisms $\mathsf{m} : \mathsf{f}\Rightarrow\mathsf{h} $ is a $2$-cell $\mathfrak{m} : f\Rightarrow h $ in $\ccc $ such that  the equation below holds.

\tiny
$$\xymatrix{  X\ar@/_6ex/[dd]_{f }
                    \ar@{}[dd]|{\xRightarrow{\mathfrak{m} } }
                    \ar@/^6ex/[dd]^{h }
										\ar[rr]^{\varrho _ {{}_{\mathsf{x}}} } && \TTTTT X\ar[dd]^{\TTTTT (h) }    
&&
X\ar[rr]^{ \varrho _ {{}_{\mathsf{x}}} }\ar[dd]_{f }  &&\TTTTT X\ar@/_6ex/[dd]_{\TTTTT (f) }
                    \ar@{}[dd]|{\xRightarrow{\TTTTT (\mathfrak{m} ) } }
                    \ar@/^6ex/[dd]^{\TTTTT (h) }
\\
&\ar@{}[r]|{\xRightarrow{\hskip .2em \varrho _{{}_{h}}  \hskip .2em } }   &
 &=& 
&\ar@{}[l]|{\xRightarrow{\hskip .2em \varrho _{{}_{f}}  \hskip .2em } }  & 
\\
 Z\ar[rr]_ {\varrho _ {{}_{\mathsf{z}}} } && \TTTTT Z
 &&
Z\ar[rr]_ {\varrho _ {{}_{\mathsf{z}}} } &&\TTTTT Z	 }$$ 
\normalsize

\end{enumerate}
\end{defi}

\begin{rem}
If $\TTTTT = (\TTTTT, \varpi, \varepsilon, \Lambda, \delta , \mathsf{s}  ) $  is a pseudocomonad on $\ccc $, 
then $\TTTTT $ induces a biadjunction $(\mathsf{L}\dashv \mathsf{U}, \varrho, \varepsilon , \underline{\mathsf{s}}, \underline{\mathsf{t} })$
in which $\mathsf{L}, \mathsf{U} $ are defined by
\small
\begin{eqnarray*}
\begin{aligned}
\mathsf{L}: \mathsf{Ps}\textrm{-}\TTTTT\textrm{-}\CoAlg & \to \ccc\\ 
\mathsf{z}= (Z, \varrho _{{}_{\mathsf{z}}}, \varsigma _ {{}_{\mathsf{z}}}, \Omega_{{}_\mathsf{z}} )&\mapsto Z\\
\mathsf{f} = (f, \varrho _ {{}_{f}} ^{-1} )&\mapsto f\\
\mathsf{m} &\mapsto \mathfrak{m}
\end{aligned}
\qquad\qquad
\begin{aligned}
\mathsf{U} : \ccc &\to \mathsf{Ps}\textrm{-}\TTTTT\textrm{-}\CoAlg\\
             Z &\mapsto \left( \TTTTT (Z), \varpi _ {{}_{Z}}, \mathsf{s} _ {{}_{Z}}, \Lambda _ {{}_{Z}}\right)\\
						f &\mapsto \left(\TTTTT(f), \varpi _ {{}_{f}}^{-1}\right)\\
						\mathfrak{m} &\mapsto \TTTTT(\mathfrak{m})						
\end{aligned}
\end{eqnarray*}
\normalsize
Reciprocally, we know that each biadjunction $(L\dashv U, \eta , \varepsilon , s, t) $  induces a pseudocomonad
$$\TTTTT = (LU, L\eta U, \varepsilon, (L\eta ) _ {{}_{\eta _ {{}_{U}} }}^{-1} , \widehat{\left(Lt\right)}, sU  ) $$
Lemma \ref{basicstuff} gives some further aspects of these constructions (which follows from calculations on the formal theory of pseudocomonads in $2$-$\CAT $).

\end{rem}

\begin{lem}\label{basicstuff}
Let $L: \bbb\to\ccc $ be a pseudofunctor. A biadjunction $(L\dashv U, \eta , \varepsilon , s, t)$ induces commutative triangles 
\small
\[\xymatrix{\bbb \ar[r]^-{\KKKK}\ar[rd]_-{L}& \mathsf{Ps} \textrm{-} \TTTTT\textrm{-}\CoAlg \ar[d]^{\mathsf{L} } &&\ccc \ar[r]^-{U}\ar[rd]_-{\mathsf{U}}& \bbb \ar[d]^{\KKKK }\\
&\ccc &&& \mathsf{Ps} \textrm{-} \TTTTT\textrm{-}\CoAlg  }\] 
\normalsize
in which $\TTTTT = (\TTTTT, \varpi, \varepsilon, \Lambda, \delta , \mathsf{s}  ) $ is the pseudocomonad induced by $(L\dashv U, \eta , \varepsilon , s, t)$, $(\mathsf{L}\dashv \mathsf{U}, \varrho, \varepsilon , \underline{\mathsf{s}}, \underline{\mathsf{t} })$ is the biadjunction induced by $\TTTTT $ and $\KKKK : \bbb\to \mathsf{Ps} \textrm{-} \TTTTT\textrm{-}\CoAlg $ is the unique (up to pseudonatural isomorphism) comparison pseudofunctor making the triangles above commutative. Namely, 
\small
\begin{eqnarray*}
\KKKK : &\bbb &\to \mathsf{Ps} \textrm{-} \TTTTT\textrm{-}\CoAlg\\
        &Y &\mapsto \left(LY, L(\eta _ {{}_{Y}}), s_ {{}_{Y}}^{-1}, \left(L\eta\right) _ {{}_{\eta _ {{}_{Y}}  } }^{-1}     \right)\\
				&g &\mapsto \left(L(g), \left( L\eta\right) _{{}_{g}}^{-1}\right)\\
				&\mathfrak{m}&\mapsto L(\mathfrak{m})
\end{eqnarray*}
\normalsize
Furthermore, we have the obvious equalities
\small
\begin{equation*}
\begin{aligned}
L(\eta _ {{}_{Y}}) &=&\varrho _ {{}_{\KKKK Y}}
\end{aligned}
\qquad\qquad
\begin{aligned}
\varpi _ {{}_{ LY }} &=& (L\eta U) _ {{}_{LY}}. 
\end{aligned}
\end{equation*}
\normalsize
\end{lem}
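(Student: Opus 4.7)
The strategy is to unfold the formal theory of pseudocomonads in $2\textrm{-}\CAT$ and check all claims directly from the definitions, treating this as a pseudo-version of the classical Eilenberg--Moore comparison construction. The main work is verifying that the prescribed image $(LY,\, L(\eta_{{}_Y}),\, s_{{}_Y}^{-1},\, (L\eta)_{{}_{\eta_{{}_Y}}}^{-1})$ is a genuine pseudocoalgebra for the induced pseudocomonad $\TTTTT = (LU, L\eta U, \varepsilon, (L\eta)_{{}_{\eta_{{}_U}}}^{-1}, \widehat{Lt}, sU)$. I would split this into: (a) checking associativity of the coalgebra structure, which unpacks to an identity among $(L\eta)_{{}_{\eta_{{}_Y}}}^{-1}$, $\llll$-data of $L$, and the pseudonatural-transformation associativity axioms of $\eta$; (b) checking the identity axiom, which reduces to the biadjunction triangle identity encoded in $s$; (c) checking that $\KKKK(g) = (L(g),(L\eta)_{{}_g}^{-1})$ satisfies the $\TTTTT$-pseudomorphism axioms, which reduces to the naturality and identity axioms for the pseudonatural transformation $\eta$ (together with the triangle equation for $s$); (d) checking that $\KKKK(\mathfrak{m}) = L(\mathfrak{m})$ defines a $\TTTTT$-transformation, which is immediate from the modification axiom for $L\eta$. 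Pseudofunctoriality data of $\KKKK$ are then inherited pointwise from those of $L$.

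Next I would verify the two triangle equalities. For $\mathsf{L}\KKKK = L$: the forgetful $\mathsf{L}$ sends $(LY, L(\eta_{{}_Y}), s_{{}_Y}^{-1}, (L\eta)_{{}_{\eta_{{}_Y}}}^{-1})$ to $LY$ on objects and $(L(g), (L\eta)_{{}_g}^{-1})$ to $L(g)$ on morphisms, and this agreement is strict on all levels of data. For $\KKKK U = \mathsf{U}$: by the very definition of $\mathsf{U}$ and of the induced pseudocomonad $\TTTTT$, one has $\mathsf{U}Z = (\TTTTT Z, \varpi_{{}_Z}, \mathsf{s}_{{}_Z}, \Lambda_{{}_Z}) = (LUZ, (L\eta U)_{{}_Z}, (sU)_{{}_Z}, ((L\eta)_{{}_{\eta_{{}_U}}}^{-1})_{{}_Z})$, while $\KKKK(UZ) = (LUZ, L(\eta_{{}_{UZ}}), s_{{}_{UZ}}^{-1}, (L\eta)_{{}_{\eta_{{}_{UZ}}}}^{-1})$, and these coincide componentwise. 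The two ``obvious equalities'' $L(\eta_{{}_Y}) = \varrho_{{}_{\KKKK Y}}$ and $\varpi_{{}_{LY}} = (L\eta U)_{{}_{LY}}$ then hold by definition of $\KKKK$ and of the induced $\TTTTT$, respectively.

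For the uniqueness clause, I would argue as follows: any comparison pseudofunctor $\KKKK'\colon \bbb \to \mathsf{Ps}\textrm{-}\TTTTT\textrm{-}\CoAlg$ making both triangles commute up to pseudonatural equivalence must send $Y$ to a pseudocoalgebra whose underlying object is $LY$ (by $\mathsf{L}\KKKK'\simeq L$) and whose coalgebra structure map $\varrho_{{}_{\KKKK'Y}}\colon LY\to LULY$ must, by the required compatibility with $\KKKK'U\simeq \mathsf{U}$ and by the universal property of the induced biadjunction $\mathsf{L}\dashv \mathsf{U}$, agree with $L(\eta_{{}_Y})$ up to a canonical invertible $2$-cell. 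The invertible modifications $\varsigma_{{}_{\KKKK'Y}}$ and $\Omega_{{}_{\KKKK'Y}}$ are similarly pinned down by the pseudonatural triangle comparisons, forcing $\KKKK'\simeq \KKKK$ up to a pseudonatural isomorphism whose components lie over identities in $\ccc$.

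The main obstacle, as the author already hints, is the sheer coherence bookkeeping: each axiom for ``pseudocoalgebra'' and ``$\TTTTT$-pseudomorphism'' is a pasting equation involving $\llll$, $\uuuu$, $\eta$-components of $L$ applied to $\eta$-components, together with the invertible modifications $s$ and $t$ of the biadjunction. Rather than carry out each of these pastings by hand, I would either (i) appeal to the coherence theorem for tricategories \cite{PowerStreet} as Lack does in \cite{SLACK2000}, reducing every instance to the strict $2$-categorical case where the identities become immediate, or (ii) systematically rewrite each axiom using the pseudonatural transformation axioms of $\eta$ (associativity, identity, naturality) and the triangle identities for $(s,t)$, matching the calculations already performed when verifying that the diagram $\DDDD^X_Y$ of Definition \ref{maindiagram} is well defined. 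Either route makes the proof essentially formal.
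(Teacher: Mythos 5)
Your proposal is correct and matches the paper's approach: the paper offers no detailed argument for this lemma, stating only that it ``follows from calculations on the formal theory of pseudocomonads in $2$-$\CAT$'' (via Lack's coherent approach and the coherence of tricategories), and your outline is precisely the direct verification that remark compresses. The only point worth flagging is that the uniqueness clause in the statement concerns comparison pseudofunctors making the triangles commute strictly, with uniqueness holding up to pseudonatural isomorphism, whereas you argue from commutativity up to equivalence; this does not affect correctness but is slightly weaker than the formulation being proved.
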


\begin{prop}\label{diagrampseudocomonad}
Let $\TTTTT = (\TTTTT, \varpi, \varepsilon, \Lambda, \delta , \mathsf{s}  ) $ be a pseudocomonad on $\ccc $. Given $\TTTTT$-pseudocoalgebras 
$$\mathsf{x}= (X, \varrho _{{}_{\mathsf{x}}}, \varsigma _ {{}_{\mathsf{x}}}, \Omega_{{}_\mathsf{x}} ), \mathsf{z}= (Z, \varrho _{{}_{\mathsf{z}}}, \varsigma _ {{}_{\mathsf{z}}}, \Omega_{{}_\mathsf{z}} ),$$
the category $\mathsf{Ps}\textrm{-}\TTTTT\textrm{-}\CoAlg (\mathsf{x}, \mathsf{z}) $ is the strict descent object  of the diagram
$\mathbb{T}_{\mathsf{z}}^{\mathsf{x}}: \Delta\to \CAT $
\begin{equation*}\tag{$\mathbb{T}_{\mathsf{z}}^{\mathsf{x}}$}
\xymatrix{  \ccc (\mathsf{L}\mathsf{x}, \mathsf{L}\mathsf{z})\ar@<-2.5ex>[rrr]_-{\ccc(\mathsf{L}\mathsf{x}, \varrho _ {{}_{\mathsf{z}}} )}\ar@<2.5ex>[rrr]^-{\ccc(\varrho _ {{}_\mathsf{x}}, \TTTTT\mathsf{L} \mathsf{z} )\circ \hspace{0.2em}\TTTTT _ {{}_{(\mathsf{L}\mathsf{x},\mathsf{L}\mathsf{z})}} } &&& \ccc (\mathsf{L}\mathsf{x}, \TTTTT \mathsf{L}\mathsf{z} )\ar[lll]|-{\ccc (\mathsf{L}\mathsf{x},\varepsilon _ {{}_{\mathsf{L}\mathsf{z} }})}
\ar@<-2.5 ex>[rrr]_-{\ccc (\mathsf{L}\mathsf{x}, \TTTTT (\varrho _{{}_\mathsf{z}}) )}\ar[rrr]|-{\ccc (\mathsf{L}\mathsf{x}, \varpi _ {{}_{\mathsf{L}\mathsf{z}}})}\ar@<2.5ex>[rrr]^-{\ccc(\varrho _ {{}_\mathsf{x}}, \TTTTT\mathsf{L} \mathsf{z} )\circ \hspace{0.2em}\TTTTT _ {{}_{(\mathsf{L}\mathsf{x},\TTTTT \mathsf{L}\mathsf{z})}}} &&& \ccc (\mathsf{L}\mathsf{x}, \TTTTT^2 \mathsf{L}\mathsf{z} ) }
\end{equation*}
such that
\small
\begin{eqnarray*}
\begin{aligned}
&\mathbb{T}_{\mathsf{z}}^{\mathsf{x}}(\sigma _ {02}) _{{}_{f}} &:=& \left( \tttt _ {{}_{\varrho _ {{}_{\mathsf{z}}}f }}\ast \id _ {{}_{ \varrho _ {{}_{ \mathsf{x} }}  }}\right)   \\
&\mathbb{T}_{\mathsf{z}}^{\mathsf{x}}(\sigma _ {12}) _{{}_{f}} &:=&  \left( \Omega _ {{}_{\mathsf{z}}}^{-1}\ast\id_ {{}_{f}}\right)\\
&\mathbb{T}_{\mathsf{z}}^{\mathsf{x}}(n _ {1}) _{{}_{f}} &:=& \left(\varsigma _ {{}_{\mathsf{z}}}^{-1}\ast \id _ {{}_{f}}\right)
\end{aligned}
\qquad\qquad
\begin{aligned}
&\mathbb{T}_{\mathsf{z}}^{\mathsf{x}}(\sigma _ {01}) _{{}_{f}} &:=& \left(\tttt _ {{}_{\TTTTT(f) \varrho _ {{}_{\mathsf{x} }} }}\ast \id _ {{}_{\varrho _ {{}_{\mathsf{x}}} }} \right)\cdot  \left( \id _ {{}_{\TTTTT ^2(f)  }}\ast \Omega _{{}_{\mathsf{x} }}\right)\cdot \left( \varpi _ {{}_{f}}^{-1}\ast \id _ {{}_{ \varrho _ {{}_{\mathsf{x} }} }}\right)  \\
&\mathbb{T}_{\mathsf{z}}^{\mathsf{x}}(n _ {0}) _{{}_{f}} &:=& \left(\id _ {{}_{f}}\ast \varsigma _ {{}_{\mathsf{x}}}\right)\cdot 
\left(\varepsilon ^{-1}_{{}_{f}}\ast\id _ {{}_{\varrho _ {{}_{\mathsf{x} }}  }}\right) 
\end{aligned}
\end{eqnarray*}
\normalsize
\end{prop}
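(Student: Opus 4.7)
The plan is to apply the explicit computation of strict descent objects in $\CAT$ given in Remark \ref{strictdescentstrict} to the $2$-functor $\mathbb{T}_\mathsf{z}^\mathsf{x}$ and to check that the resulting description coincides, on the nose, with the definition of the hom-category $\mathsf{Ps}\textrm{-}\TTTTT\textrm{-}\CoAlg(\mathsf{x},\mathsf{z})$ from Definition \ref{PSEUDOCOALGBRAS}. Since both sides have the same underlying set $\ccc(X,Z)$ in their first component, it will suffice to match (i) the invertible $2$-cell datum, (ii) the associativity equation, (iii) the identity equation, and (iv) the morphism condition.

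First I would observe that, by Remark \ref{strictdescentstrict}, an object of the strict descent object of $\mathbb{T}_\mathsf{z}^\mathsf{x}$ is a pair $(f,\varrho_\mathsf{f}^{-1})$ with $f\in\ccc(X,Z)=\mathbb{T}_\mathsf{z}^\mathsf{x}\mathsf{1}$ and an invertible $2$-cell
\[\varrho_\mathsf{f}^{-1}:\mathbb{T}_\mathsf{z}^\mathsf{x}(d^1)f=\varrho_\mathsf{z}f\;\Longrightarrow\;\TTTTT(f)\varrho_\mathsf{x}=\mathbb{T}_\mathsf{z}^\mathsf{x}(d^0)f,\]
which is exactly the $2$-cell datum of a $\TTTTT$-pseudomorphism. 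Next I would substitute the formulas for $\mathbb{T}_\mathsf{z}^\mathsf{x}(\sigma_{01})_f$, $\mathbb{T}_\mathsf{z}^\mathsf{x}(\sigma_{02})_f$, $\mathbb{T}_\mathsf{z}^\mathsf{x}(\sigma_{12})_f$ into the descent-object associativity equation and rearrange. The key point is that the factor $\tttt_{\TTTTT(f)\varrho_\mathsf{x}}$ appearing in $\mathbb{T}_\mathsf{z}^\mathsf{x}(\sigma_{01})$ is precisely the correction $\tttt$ used to form $\widehat{\TTTTT(\varrho_\mathsf{f}^{-1})}$, while the factor $\varpi_f^{-1}$ in $\mathbb{T}_\mathsf{z}^\mathsf{x}(\sigma_{01})$ is the same $\varpi_f^{-1}$ appearing on the right-hand side of the associativity pasting in Definition \ref{PSEUDOCOALGBRAS}, and the factor $\Omega_\mathsf{x}$ in $\mathbb{T}_\mathsf{z}^\mathsf{x}(\sigma_{01})$ together with $\Omega_\mathsf{z}$ from $\mathbb{T}_\mathsf{z}^\mathsf{x}(\sigma_{12})^{-1}$ assembles the remaining two corners of that pasting. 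Thus the descent associativity and the pseudocoalgebra morphism associativity are the same equation after cancelling a common $\tttt$-string.

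The identity equation is handled analogously: plugging $\mathbb{T}_\mathsf{z}^\mathsf{x}(n_1)_f=\varsigma_\mathsf{z}^{-1}\ast\id_f$ and $\mathbb{T}_\mathsf{z}^\mathsf{x}(n_0)_f=(\id_f\ast\varsigma_\mathsf{x})\cdot(\varepsilon_f^{-1}\ast\id_{\varrho_\mathsf{x}})$ into $\mathbb{T}_\mathsf{z}^\mathsf{x}(n_0)_f\cdot\mathbb{T}_\mathsf{z}^\mathsf{x}(s^0)(\varrho_\mathsf{f}^{-1})\cdot\mathbb{T}_\mathsf{z}^\mathsf{x}(n_1)_f=\id_f$ reproduces the triangle-shaped pasting equation in Definition \ref{PSEUDOCOALGBRAS} for $\TTTTT$-pseudomorphisms. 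Finally, for $1$-cells, the descent morphism condition $\mathbb{T}_\mathsf{z}^\mathsf{x}(d^0)(\mathfrak{m})\cdot\varrho_\mathsf{f}^{-1}=\varrho_\mathsf{h}^{-1}\cdot\mathbb{T}_\mathsf{z}^\mathsf{x}(d^1)(\mathfrak{m})$ reads $(\TTTTT(\mathfrak{m})\ast\id_{\varrho_\mathsf{x}})\cdot\varrho_\mathsf{f}^{-1}=\varrho_\mathsf{h}^{-1}\cdot(\id_{\varrho_\mathsf{z}}\ast\mathfrak{m})$, which upon inversion is the defining equation of a $\TTTTT$-transformation.

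The main obstacle is the bookkeeping in the associativity step: one must keep careful track of all the $\tttt$-corrections and of the $\varpi_f^{-1}$, $\varepsilon_f^{-1}$ $2$-cells coming from the pseudonaturality of $\varpi$ and $\varepsilon$, since both the descent $\sigma_{ij}$'s and the pseudocoalgebra morphism axioms are ``decorated'' by these coherence $2$-cells in slightly different but ultimately equivalent arrangements. Once one writes the two pastings in the same normal form and invokes the modification axioms for $\Lambda$, $\delta$, $\mathsf{s}$, they coincide, completing the identification of $\mathsf{Ps}\textrm{-}\TTTTT\textrm{-}\CoAlg(\mathsf{x},\mathsf{z})$ with $\{\dot{\Delta}(\mathsf{0},\j-),\mathbb{T}_\mathsf{z}^\mathsf{x}\}$.
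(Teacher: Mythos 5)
Your proposal is correct and follows exactly the route the paper takes: the paper's proof is the one-line statement that the result ``follows from Definition \ref{PSEUDOCOALGBRAS} and Remark \ref{strictdescentstrict}'', and your argument is precisely the spelled-out verification of that claim, matching the descent datum $\varrho_{\mathsf{z}}f\Rightarrow\TTTTT(f)\varrho_{\mathsf{x}}$ with the inverse of the pseudomorphism $2$-cell and the descent associativity, identity, and modification conditions with the corresponding pseudomorphism and $\TTTTT$-transformation axioms.
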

\begin{proof}
It follows from Definition \ref{PSEUDOCOALGBRAS} and Remark \ref{strictdescentstrict}.
\end{proof}

Recall that every biadjunction induces diagrams $\DDDD _ Y^X: \dot{\Delta } \to \CAT $  (Definition \ref{maindiagram}). Also,  for every pseudocomonad $\TTTTT $ and objects $\mathsf{x}, \mathsf{z} $ of $\mathsf{Ps}\textrm{-}\TTTTT\textrm{-}\CoAlg $, we defined in Proposition \ref{diagrampseudocomonad} a diagram $\mathbb{T}_{\mathsf{z}}^{\mathsf{x}}: \Delta\to\CAT $ whose strict descent object is $\mathsf{Ps}\textrm{-}\TTTTT\textrm{-}\CoAlg (\mathsf{x}, \mathsf{z}) $. Now,  we give the relation between these two diagrams.

\begin{lem}\label{DescentAlgebra}
Let $L: \bbb\to\ccc $ be a pseudofunctor, $(L\dashv U, \eta , \varepsilon , s, t) $  be a biadjunction and $\TTTTT = (\TTTTT, \varpi, \varepsilon, \Lambda, \delta , \mathsf{s}  ) $ be the induced pseudocomonad. For each pair $(X,Y)$ of objects in $\bbb $, $(L\dashv U, \eta , \varepsilon , s, t) $  induces the diagram
$\DDDD ^X_Y : \dot{\Delta } \to\CAT $
and $\TTTTT$ induces the diagram
$\mathbb{T}^{\KKKK X}_{\KKKK Y}: \Delta\to\CAT $
defined in Proposition \ref{diagrampseudocomonad}, in which $\KKKK : \bbb\to\mathsf{Ps}\textrm{-}\TTTTT\textrm{-}\CoAlg$ is the comparison pseudofunctor. 
In this setting, there is a pseudonatural isomorphism 
$\beta: \DDDD ^X_Y\circ\j\longrightarrow \mathbb{T}^{\KKKK X}_{\KKKK Y} $ 
for every such pair $(X,Y)$ of objects in $\bbb $. Moreover, if $L$ is a $2$-functor, $\beta $ is actually a $2$-natural isomorphism.
\end{lem}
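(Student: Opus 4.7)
The plan is to build $\beta$ as a \emph{pointwise identity} with coherence 2-cells coming from the compositors of the pseudofunctor $L$. First, using Lemma~\ref{basicstuff} I would make the substitutions
\[ \mathsf{L}\KKKK X = LX,\quad \mathsf{L}\KKKK Y = LY,\quad \TTTTT = LU,\quad \varpi_{LY} = L(\eta_{ULY}),\quad \varrho_{\KKKK Y} = L(\eta_Y),\quad \varsigma_{\KKKK Y} = s_Y^{-1},\quad \Omega_{\KKKK Y} = (L\eta)_{\eta_Y}^{-1}, \]
and analogously for $\KKKK X$, into the definition of $\mathbb{T}^{\KKKK X}_{\KKKK Y}$ given in Proposition~\ref{diagrampseudocomonad}. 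After this substitution, both $\DDDD^X_Y\circ\j$ and $\mathbb{T}^{\KKKK X}_{\KKKK Y}$ send $\mathsf{n}$ to the same hom-category $\ccc(LX, L(UL)^{\mathsf{n}-1}Y)$, so I set each $\beta_{\mathsf{n}}$ to be the identity functor.

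Second, I would compare the 1-cell components. The functors associated to $d^1$, $s^0$, $\partial^1$, $\partial^2$ agree on the nose in both diagrams (the only non-trivial identification being $\TTTTT(\varrho_{\KKKK Y}) = LU(L(\eta_Y)) = LUL(\eta_Y)$ at $\partial^2$, which is literal), and so I take their $\beta$-components to be identity 2-cells. The discrepancy occurs at $d^0$ and $\partial^0$:
\[ \DDDD^X_Y(d^0)(f) = L(U(f)\eta_X) \quad\text{while}\quad \mathbb{T}^{\KKKK X}_{\KKKK Y}(d^0)(f) = \TTTTT(f)\varrho_{\KKKK X} = L(U(f))L(\eta_X), \]
related by the invertible compositor $\llll_{U(f)\eta_X}$. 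So I set $(\beta_{d^0})_f := \llll_{U(f)\eta_X}$ and similarly $(\beta_{\partial^0})_g := \llll_{U(g)\eta_X}$.

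Third, I would verify the three axioms of Definition~\ref{pseudonaturaltransformation}. Since nearly all components of $\beta$ are identities, the naturality axiom at each generating 2-cell $\sigma_{01}, \sigma_{02}, \sigma_{12}, n_0, n_1$ reduces to an equation of 2-cells in $\ccc$ involving the compositors $\llll_{U(-)\eta_X}$. A representative check: the two formulas
\[ \DDDD^X_Y(n_0)_f = (\id_f \ast s_X^{-1}) \cdot (\varepsilon_f^{-1} \ast \id_{\eta_X}) \cdot (\id_{\varepsilon_{LY}} \ast \llll_{U(f)\eta_X}^{-1}) \]
and $\mathbb{T}^{\KKKK X}_{\KKKK Y}(n_0)_f = (\id_f \ast s_X^{-1}) \cdot (\varepsilon_f^{-1} \ast \id_{L(\eta_X)})$ differ precisely by the whiskering $\id_{\varepsilon_{LY}} \ast \llll_{U(f)\eta_X}^{-1}$, which is exactly the 2-cell needed to make $\beta_{s^0d^0}$ (determined from $\beta_{s^0}=\id$ and $\beta_{d^0}$ via the associativity axiom) turn one into the other. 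The cases of $\sigma_{01}$ and $\sigma_{02}$ are entirely analogous but longer, and both collapse to instances of the pseudofunctor coherence for $L$ together with the pseudonaturality of $\eta$. The associativity and identity axioms for $\beta$ reduce directly to the associativity and identity coherences for $L$.

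Finally, every $\beta_f$ is either an identity or a compositor $\llll_{-,-}$, hence invertible, so $\beta$ is a pseudonatural isomorphism. If $L$ is a $2$-functor, every compositor is an identity, so each $\beta_f$ is an identity $2$-cell and $\beta$ becomes a $2$-natural isomorphism. The main obstacle is not conceptual but purely bureaucratic: tracking the several-line pastings defining $\DDDD^X_Y(\sigma_{01})$, $\DDDD^X_Y(\sigma_{02})$, $\DDDD^X_Y(n_0)$ against their shorter counterparts in $\mathbb{T}^{\KKKK X}_{\KKKK Y}$ after substitution of the pseudocoalgebra data of $\KKKK X$ and $\KKKK Y$; the content is already built into the definition of the comparison $\KKKK$.
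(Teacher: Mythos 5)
Your proposal is correct and follows essentially the same route as the paper: the paper's proof defines $\beta$ with identity components everywhere except $(\beta_{d^0})_f := \llll_{U(f)\eta_X}$ and $(\beta_{\partial^0})_f := \llll_{U(f)\eta_X}$, exactly as you do, and likewise observes that these become identities when $L$ is a $2$-functor. Your sketch of the coherence verifications is additional detail the paper leaves implicit.
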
 

\begin{proof}
We can write $\mathbb{T}^{\KKKK X}_{\KKKK Y}: \Delta\to\CAT $ as follows
\small
\[\xymatrix{  \ccc (LX, LY)\ar@<-2.5ex>[rrr]_-{\ccc(LX, L(\eta _ {{}_{Y}}) )}\ar@<2.5ex>[rrr]^-{\ccc(L(\eta _ {{}_{X}}), LUL Y )\circ \hspace{0.2em}(LU) _ {{}_{L X, L Y}} } &&& \ccc (L X,  LUL Y )\ar[lll]|-{\ccc ( LX,\varepsilon _ {{}_{LY }})}
\ar@<-2.5 ex>[rrrr]_-{\ccc ( LX, LUL(\eta _ {{}_{Y}}) )}\ar[rrrr]|-{\ccc (LX, L(\eta _ {{}_{LY}}))}\ar@<2.5ex>[rrrr]^-{\ccc(L(\eta _ {{}_{X}}), LUL Y )\circ \hspace{0.2em}(LU) _ {{}_{LX, LUL Y}}} &&&& \ccc (L X, LULUL Y ) }\]
\normalsize
Furthermore, by Lemma \ref{basicstuff} and the observations given in this section, we can define a pseudonatural isomorphism
$$\beta: \DDDD ^X_Y\circ\j\longrightarrow \mathbb{T}^{\KKKK X}_{\KKKK Y} $$
such that
$\beta _ {{}_{\mathsf{1} }},\beta _ {{}_{\mathsf{2} }},\beta _ {{}_{\mathsf{3} }}$
are identity functors,
$\beta _ {{}_{d^1 }},\beta _ {{}_{\partial ^1 }},\beta _ {{}_{\partial ^2 }}, \beta _ {{}_{s^0}} $
are identity natural transformations,
$\left(\beta _ {{}_{d^0 }}\right) _ {{}_{f}}:= \llll _ {{}_{U(f)\eta _ {{}_{X}} }}$ and 
$\left( \beta _ {{}_{\partial ^0 }}\right) _ {{}_{f}} := \llll _ {{}_{U(f)\eta _ {{}_{X}} }}$.
This completes the proof.
\end{proof}

Let $(L\dashv U, \eta , \varepsilon , s, t) $ be a biadjunction and $\TTTTT $ be the induced pseudocomonad. By Lemma \ref{strictequivalence}, Proposition \ref{diagrampseudocomonad}  and Lemma \ref{DescentAlgebra}, $\mathsf{Ps}\textrm{-}\TTTTT\textrm{-}\CoAlg (\KKKK X, \KKKK Y) $ is a descent object of $\DDDD ^X_Y\circ\j $ for every pair of objects $(X,Y)$ of $\bbb $. Moreover, 
$\KKKK _{{}_{X,Y}}:\bbb (X,Y)\to  \mathsf{Ps}\textrm{-}\TTTTT\textrm{-}\CoAlg (\KKKK X, \KKKK Y) $
is the comparison $\DDDD ^X_Y\mathsf{0}\to \left\{ \dot{\Delta }(\mathsf{0}, \j - ), \DDDD ^X_Y\circ\j \right\} $. Thereby we get:

\begin{prop}\label{prepseudocomonadicfundamental}
Let $(L\dashv U, \eta , \varepsilon , s, t) $ be a biadjunction, $\TTTTT $ be the induced pseudocomonad and $\KKKK : \bbb\to  \mathsf{Ps}\textrm{-}\TTTTT\textrm{-}\CoAlg  $ be the comparison pseudofunctor. For each pair of objects $(X,Y) $ in $\bbb $,   $\DDDD ^X_Y : \dot{\Delta } \to\CAT$ is of effective descent if and only if 
$$\KKKK _{{}_{X,Y}}:\bbb (X,Y)\to  \mathsf{Ps}\textrm{-}\TTTTT\textrm{-}\CoAlg (\KKKK X, \KKKK Y) $$
is an equivalence. Furthermore, if $L$ is a $2$-functor, $\DDDD ^X_Y$ is of strict descent if and only if
$\KKKK _{{}_{X,Y}}$
is an isomorphism.
\end{prop}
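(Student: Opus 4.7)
The plan is to assemble the three results preceding the statement into a direct identification. First, I would invoke Proposition \ref{diagrampseudocomonad} to recognize the hom-category $\mathsf{Ps}\textrm{-}\TTTTT\textrm{-}\CoAlg(\KKKK X, \KKKK Y)$ as the strict descent object of $\mathbb{T}^{\KKKK X}_{\KKKK Y}$. Then, using Lemma \ref{DescentAlgebra}, I have a pseudonatural isomorphism $\beta : \DDDD^X_Y\circ\j \longrightarrow \mathbb{T}^{\KKKK X}_{\KKKK Y}$, which upgrades to a $2$-natural isomorphism when $L$ is a $2$-functor. Since (bi)limits are invariant under pseudonatural equivalences (respectively $2$-natural isomorphisms) of the diagram, and since by Lemma \ref{strictequivalence} every strict descent object is a descent object, it follows that $\mathsf{Ps}\textrm{-}\TTTTT\textrm{-}\CoAlg(\KKKK X, \KKKK Y)$ is a descent object of $\DDDD^X_Y\circ\j$, and a strict descent object in the $2$-functor case.

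Next, I would verify that $\KKKK_{{}_{X,Y}}$ coincides, modulo $\beta$, with the universal comparison $1$-cell $\DDDD^X_Y\mathsf{0}\to \left\{\dot{\Delta}(\mathsf{0},\j -),\DDDD^X_Y\circ\j\right\}_{\bi}$. The point is to unpack the evaluation pseudonatural transformation of $\DDDD^X_Y$ object-by-object in $\Delta$: using the formulas in Definition \ref{maindiagram} one reads off that a morphism $g\in\bbb(X,Y)$ is sent to $(L(g),(L\eta)_g^{-1})$, with identity and associativity data assembled from $s_{{}_Y}$ and $(L\eta)_{\eta_{{}_Y}}^{-1}$. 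By Lemma \ref{basicstuff} this is exactly the image $\KKKK(g)$ of $g$ under the comparison pseudofunctor.

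With these two identifications in hand, the statement becomes a direct application of the defining property of (strict) descent: $\DDDD^X_Y$ is of effective descent precisely when the canonical $1$-cell into its descent object is an equivalence, and of strict descent (when $L$ is a $2$-functor) precisely when the strict comparison is an isomorphism. In both cases this comparison is $\KKKK_{{}_{X,Y}}$, giving both biconditionals. The only step requiring more than formal manipulation is the identification in the second paragraph; this is mainly a matter of carefully matching the structure $2$-cells of $\DDDD^X_Y$ with the pseudocoalgebra data of $\KKKK(g)$, and the ingredients for this matching have already been laid out cleanly in Definition \ref{maindiagram} and Lemma \ref{basicstuff}.
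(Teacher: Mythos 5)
Your proposal is correct and follows essentially the same route as the paper: the text preceding the proposition combines Lemma \ref{strictequivalence}, Proposition \ref{diagrampseudocomonad} and Lemma \ref{DescentAlgebra} to exhibit $\mathsf{Ps}\textrm{-}\TTTTT\textrm{-}\CoAlg(\KKKK X,\KKKK Y)$ as a (strict) descent object of $\DDDD^X_Y\circ\j$ and then identifies $\KKKK_{{}_{X,Y}}$ with the comparison $1$-cell, exactly as you do. Your second paragraph merely spells out the identification of $\KKKK_{{}_{X,Y}}$ with the comparison, which the paper asserts without detail.
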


\subsection{Biadjoint Triangles}\label{refined}

In this subsection, we reexamine the results of Section \ref{Main} in the context of pseudocomonad theory. More precisely, we prove Corollary \ref{PMAIN} of our main theorems in Section \ref{Main}, Theorem \ref{MAIN} and Theorem \ref{MAINstrict}. 

Let $(L,U, \eta , \varepsilon , s, t) $ be a biadjunction and $\TTTTT $ be its induced pseudocomonad. We say that $L: \bbb\to\ccc $ is \textit{pseudoprecomonadic}, if its induced comparison pseudofunctor $\KKKK:\bbb\to \mathsf{Ps}\textrm{-}\TTTTT\textrm{-}\CoAlg $ is locally an equivalence. As a consequence of Proposition \ref{prepseudocomonadicfundamental}, we get a characterization of pseudoprecomonadic pseudofunctors.

\begin{coro}[Pseudoprecomonadic]\label{preprepseudocomonadic}
Let $(L\dashv U, \eta , \varepsilon , s, t) $ be a biadjunction. The pseudofunctor $L: \bbb\to\ccc$ is pseudoprecomonadic if and only if $\DDDD ^X_Y: \dot{\Delta } \to\CAT $
 is of effective descent for every pair of objects $(X,Y) $ of $\bbb $.
\end{coro}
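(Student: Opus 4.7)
The statement is essentially a direct translation of Proposition \ref{prepseudocomonadicfundamental} into the terminology of pseudoprecomonadicity, so the plan is short.

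The first step is to unpack the definitions. By definition, $L : \bbb \to \ccc$ is pseudoprecomonadic precisely when its comparison pseudofunctor $\KKKK : \bbb \to \mathsf{Ps}\textrm{-}\TTTTT\textrm{-}\CoAlg$ is locally an equivalence; that is, for every pair of objects $(X,Y)$ of $\bbb$, the functor
\[
\KKKK_{{}_{X,Y}} : \bbb(X,Y) \longrightarrow \mathsf{Ps}\textrm{-}\TTTTT\textrm{-}\CoAlg(\KKKK X, \KKKK Y)
\]
is an equivalence of categories. So the ``for all $(X,Y)$'' quantifier appears symmetrically on both sides of the equivalence we wish to prove.

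The second step is to invoke Proposition \ref{prepseudocomonadicfundamental} pointwise. That proposition asserts exactly that, for each fixed pair $(X,Y)$, the $2$-functor $\DDDD^X_Y : \dot{\Delta} \to \CAT$ is of effective descent if and only if $\KKKK_{{}_{X,Y}}$ is an equivalence. Quantifying over all $(X,Y)\in\obj(\bbb)\times\obj(\bbb)$ gives both implications of the corollary at once.

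Since the work has already been done in the preceding results, there is no real obstacle; one is just repackaging Proposition \ref{prepseudocomonadicfundamental}. For completeness, one should recall that Proposition \ref{prepseudocomonadicfundamental} itself rests on three ingredients established earlier: Proposition \ref{diagrampseudocomonad}, which realises $\mathsf{Ps}\textrm{-}\TTTTT\textrm{-}\CoAlg(\KKKK X,\KKKK Y)$ as the strict descent object of $\mathbb{T}^{\KKKK X}_{\KKKK Y}$; Lemma \ref{DescentAlgebra}, which provides a pseudonatural isomorphism $\DDDD^X_Y\circ\j \simeq \mathbb{T}^{\KKKK X}_{\KKKK Y}$ so that the two candidate descent objects agree up to equivalence; and Lemma \ref{strictequivalence} together with Lemma \ref{triiviall}, which let us pass between strict and bicategorical descent objects under equivalence of diagrams. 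Thus the proof of the corollary reduces to writing: ``by definition of pseudoprecomonadicity and by Proposition \ref{prepseudocomonadicfundamental}''.
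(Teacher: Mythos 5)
Your proposal is correct and coincides with the paper's own (implicit) argument: the corollary is stated there as an immediate consequence of Proposition \ref{prepseudocomonadicfundamental}, obtained exactly by unpacking the definition of pseudoprecomonadicity as $\KKKK$ being locally an equivalence and quantifying the proposition over all pairs $(X,Y)$. Your additional remarks tracing the proposition back to Proposition \ref{diagrampseudocomonad}, Lemma \ref{DescentAlgebra} and Lemma \ref{strictequivalence} accurately reflect the paper's chain of reasoning.
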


By Corollary \ref{preprepseudocomonadic}, assuming that $(L\dashv U, \eta , \varepsilon , s, t) $ is a biadjunction and $J: \aaa\to\bbb $ is a pseudofunctor, if $L: \bbb\to\ccc $ is pseudoprecomonadic, then, in particular, 
$\DDDD_Y^{JA}:\dot{\Delta }\to\CAT $ 
 is of effective descent for every object $A$ of $\aaa $ and every object $Y$ of $\bbb $. Thereby, as a consequence of Theorem \ref{MAIN}, Theorem \ref{MAINstrict} and Propostion \ref{prepseudocomonadicfundamental}, we get:

\begin{coro}[Biadjoint Triangle Theorem]\label{PMAIN}
Assume that $(E\dashv R, \rho , \mu , v,w), (L\dashv U, \eta , \varepsilon , s,t)$ are biadjunctions such that the triangle of pseudofunctors
\small
\[\xymatrix{  \aaa\ar[rr]^-{J}\ar[dr]_-{E}&&\bbb\ar[dl]^-{L}\\
&\ccc & }\]
\normalsize
is commutative and $L$ is pseudoprecomonadic. Then $J$ has a right biadjoint if and only if, for every object $Y$ of $\bbb $, $\aaa$ has the descent object of the diagram $\AAAA _Y : \Delta \to \aaa $. In this case, $J$ is left biadjoint to 
$GY:= \left\{ \dot{\Delta } (\mathsf{0}, \j - ), \AAAA _Y\right\} _{\bi }$.

If, furthermore, $E, R, J, L, U$ are $2$-functors, $(E\dashv R, \rho , \mu ) $ is a $2$-adjunction, $\left(\eta J\right)$ is a $2$-natural transformation and the comparison $2$-functor $\KKKK : \bbb\to \mathsf{Ps}\textrm{-}\TTTTT\textrm{-}\CoAlg $ induced by the biadjunction $L\dashv U $ is locally an isomorphism, then
$J$ is left $2$-adjoint if and only if the strict descent object of $\AAAA _Y $ exists for every object $Y$ of $\bbb $. In this case, 
$GY:= \left\{ \dot{\Delta } (\mathsf{0}, \j - ), \AAAA _Y\right\} $
defines the right $2$-adjoint to $J$.
\end{coro}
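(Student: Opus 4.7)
The plan is to obtain both statements of the corollary as direct consequences of Theorem \ref{MAIN} (respectively, Theorem \ref{MAINstrict}), by using Corollary \ref{preprepseudocomonadic} (respectively, the second half of Proposition \ref{prepseudocomonadicfundamental}) to verify the descent hypotheses on the diagrams $\DDDD^{JA}_Y$ that these main theorems require. Since all the heavy construction (definition of $\AAAA_Y$ and $\DDDD^{JA}_Y$, the pseudonatural equivalence $\psi$, the descent-object arguments) has already been carried out, the proof reduces to checking that the pseudoprecomonadicity hypothesis on $L$ delivers exactly what the main theorems need.

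For the biadjoint case, I would first note that since $L\dashv U$ is a biadjunction and $L$ is pseudoprecomonadic, Corollary \ref{preprepseudocomonadic} gives that the $2$-functor $\DDDD^X_Y:\dot{\Delta}\to\CAT$ is of effective descent for \emph{every} pair of objects $(X,Y)$ of $\bbb$. Specializing $X=JA$ for each object $A$ of $\aaa$, we obtain that $\DDDD^{JA}_Y$ is of effective descent for every pair $(A\in\aaa, Y\in\bbb)$, which is precisely the hypothesis invoked by Theorem \ref{MAIN}. Applying that theorem yields both the equivalence between existence of the right biadjoint and existence of the descent objects of $\AAAA_Y$, and the formula $GY:= \left\{ \dot{\Delta }(\mathsf{0},\j-),\AAAA_Y\right\}_{\bi}$.

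For the strict version, the additional hypotheses ($E,R,J,L,U$ are $2$-functors, $(E\dashv R,\rho,\mu)$ is a $2$-adjunction, $\eta J$ is $2$-natural, and $\KKKK$ is locally an isomorphism) are tailored to match the hypotheses of Theorem \ref{MAINstrict}. Since $L$ is a $2$-functor and $\KKKK$ is locally an isomorphism, the second assertion of Proposition \ref{prepseudocomonadicfundamental} upgrades the effective descent of $\DDDD^X_Y$ to strict descent, again for every pair $(X,Y)$ in $\bbb$; in particular $\DDDD^{JA}_Y$ is of strict descent for every pair $(A\in\aaa,Y\in\bbb)$. Theorem \ref{MAINstrict} then gives the equivalence between $J$ having a right $2$-adjoint and the existence in $\aaa$ of the strict descent objects of each $\AAAA_Y$, together with the corresponding formula for $GY$.

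In summary, there is no genuine new calculation to perform. The only minor point requiring care is to check that the strict hypotheses of the corollary do line up with those of Theorem \ref{MAINstrict} (in particular that $\eta J$ being $2$-natural is what is needed, and that pseudoprecomonadicity in the strict sense ($\KKKK$ locally an isomorphism) gives strict descent via Proposition \ref{prepseudocomonadicfundamental} rather than merely effective descent). Once this bookkeeping is done, both statements follow immediately from the main theorems of Section \ref{Main}.
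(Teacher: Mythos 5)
Your proposal is correct and follows essentially the same route as the paper: the text immediately preceding Corollary \ref{PMAIN} derives it by exactly this combination of Corollary \ref{preprepseudocomonadic} (specialized to $X=JA$) with Theorem \ref{MAIN}, and of Proposition \ref{prepseudocomonadicfundamental} with Theorem \ref{MAINstrict} for the strict case. Your bookkeeping remarks on matching the hypotheses of Theorem \ref{MAINstrict} are precisely the content the paper leaves implicit.
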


\section{Unit and Counit}\label{counitunit}
In this section, we show that the pseudoprecomonadicity characterization given in Theorem 3.5 of \cite{CME} is a consequence of Corollary \ref{preprepseudocomonadic}. Secondly, we study again biadjoint triangles. Namely, in the context of Corollary \ref{PMAIN}, we give necessary and sufficient conditions under which the unit and the counit of the obtained biadjunction $J\dashv G $ are pseudonatural equivalences, provided that $E$ and $L$ induce the same pseudocomonad. In other words, we prove the appropriate analogous versions of Corollary 1 and Corollary 2 of page 76 in \cite{Dubuc} within our context of biadjoint triangles. 

Again, we need to consider another type of $2$-functors induced by biadjunctions. The definition below is given in Theorem 3.5 of \cite{CME}.
 
\begin{defi}\label{almostabsolutediagram}
Assume that $L:\bbb\to\ccc $ is a pseudofunctor and   $(L\dashv U, \eta , \varepsilon , s, t)$ is a biadjunction. For each object $Y$ of $\bbb $, we get the $2$-functor
$\VVVV _Y : \dot{\Delta}\to \bbb $
\small
\begin{equation*}\tag{$\VVVV _ Y $}
\xymatrix{  Y \ar[r]^-{\eta _ {{}_{Y}}} & ULY\ar@<-2.5ex>[rrr]_-{UL(\eta _{{}_Y})}\ar@<2.5ex>[rrr]^-{\eta_ {{}_{ULY}}} &&& ULULY\ar[lll]|-{U(\varepsilon _ {{}_{LY}})}
\ar@<-2.5 ex>[rrr]_-{ULUL(\eta _ {{}_{Y}})}\ar[rrr]|-{UL(\eta _ {{}_{ULY}})}\ar@<2.5ex>[rrr]^-{\eta_ {{}_{ULULY}}} &&& ULULULY }
\end{equation*}
\normalsize
in which 
$\displaystyle\VVVV _Y (\vartheta ) := \left( \eta _ {{}_{\eta _{{}_Y}}}:UL(\eta _{{}_Y})\eta _ {{}_Y}\cong  \eta _ {{}_{ULY}}\eta _{{}_Y}\right) $ 
is the invertible $2$-cell component of the unit $\eta $ at the morphism $\eta _ {{}_Y} $. Analogously, the images of the $2$-cells $\sigma _ {ik}, n_0, n_1$ are defined below.
\small
\begin{equation*}
\begin{aligned}
\VVVV _Y (\sigma _{01})        &:=   \eta _ {{}_{\eta _{{}_{ULY}}}}\\ 
 \VVVV _Y (\sigma _{02})        &:=   \eta _ {{}_{UL(\eta _{{}_Y})}}\\
\VVVV _Y  (\sigma _{12})       &:=   (UL\eta) _ {{}_{\eta _{{}_Y}}}
\end{aligned}
\qquad\qquad
\begin{aligned}
\VVVV _Y  (n_0) & :=  t_{{}_{LY}}\\
\VVVV _Y (n_1) & := \left(\uuuu_{{}_{\varepsilon _ {{}_{LY}},L(\eta _{{}_Y})}}\right)^{-1}\cdot U(s_{{}_Y})\cdot \uuuu _ {{}_{ULY}}
\end{aligned}
\end{equation*}  
\normalsize
\end{defi}
We verify below that \textit{$ \VVVV _Y $ is well defined}. That is to say, we have to prove that $ \VVVV _Y $ satisfies the equations given in Definition \ref{Presentation}. Firstly, the associativity and naturality equations of Definition \ref{pseudonaturaltransformation} give the following equality
\small
$$\xymatrix{ &&
\VVVV _Y\mathsf{0}\ar[rr]|-{\VVVV _Y(d)}\ar[d]|-{\VVVV _Y(d)}\ar[lld]|-{\VVVV _Y(d)}\ar@{}[rrd]|-{\xLeftarrow{\VVVV _Y(\vartheta ) } }
&&
\VVVV _Y\mathsf{1}\ar[d]|-{\VVVV _Y(d^1)}\ar@{}[rrdd]|-{=}
&& 
\VVVV _Y\mathsf{0}\ar[rr]|-{\VVVV _Y(d)}\ar@{}[rrd]|-{\xLeftarrow{\VVVV _Y(\vartheta )}}\ar[d]|-{\VVVV _Y(d)}
&&
\VVVV _Y\mathsf{1}\ar[d]|-{\VVVV _Y (d^1)}\ar[rrd]|-{\VVVV _Y(d^1)}
&&
\\
\VVVV _Y\mathsf{1}\ar@{}[rr]|-{\xLeftarrow{\VVVV _Y(\vartheta ) } }\ar[rrd]|-{\VVVV _Y(d^0)}
&&
\VVVV _Y\mathsf{1}\ar[rr]|-{\VVVV _Y(d^0)}\ar[d]|-{\VVVV _Y(d^1)}\ar@{}[rrd]|-{\xLeftarrow{\VVVV _Y(\sigma _ {02}) }}
&&
\VVVV _Y\mathsf{2}\ar[d]|-{\VVVV _Y(\partial ^2) }
&&
\VVVV _Y\mathsf{1}\ar[rr]|-{\VVVV _Y(d^0)}\ar[d]|-{\VVVV _Y(d^0)}\ar@{}[rrd]|-{\xLeftarrow{\VVVV _Y(\sigma _ {01}) } }
&&
\VVVV _Y\mathsf{2}\ar@{}[rr]|-{\xLeftarrow{\VVVV _Y(\sigma _ {12}) } }\ar[d]|-{\VVVV _Y(\partial ^1)}
&&
\VVVV _Y\mathsf{2}\ar[lld]|-{\VVVV _Y(\partial ^2)}\\
&&
\VVVV _Y\mathsf{2}\ar[rr]|-{\VVVV _Y(\partial ^0)}
&&
\VVVV _Y\mathsf{3}
&&
\VVVV _Y\mathsf{2}\ar[rr]|- {\VVVV _Y(\partial ^0)}
&&
\VVVV _Y\mathsf{3}
}$$
\normalsize
which is the associativity equation of Definition \ref{Presentation}. Furthermore, by Definition \ref{ad} of biadjunction, we have that
\small
$$\xymatrix{  \VVVV _Y\mathsf{0}     \ar[rr]^-{\VVVV _Y(d)=\eta_ {{}_Y}} 
                    \ar[dd]|-{\VVVV _Y(d)=\eta_ {{}_Y}} 
                      && 
              {\VVVV _Y\mathsf{1}}  \ar[dd]|-{\VVVV _Y(d^1)=UL(\eta _ {{}_Y})}
                    \ar@{=}@/^5ex/[dddr]
                    \ar@{}[dddr]|-{\xLeftarrow{\VVVV _Y(n_1)}} &&&
        \VVVV _Y\mathsf{0}    \ar@/_4ex/[ddd]_-{\VVVV _Y(d)}
                    \ar@{}[ddd]|=
                    \ar@/^4ex/[ddd]^-{\VVVV _Y(d)}
                                        \\ 
          & \xLeftarrow{\hskip .1em \VVVV _Y(\vartheta )}  && &=                            
                                        \\
              {\VVVV _Y\mathsf{1})}  \ar[rr]_{\VVVV _Y(d^0)=\eta _ {{}_{ULY}}}
                    \ar@{=}@/_5ex/[drrr]
                    \ar@{}[drrr]|{\xLeftarrow{\VVVV _Y(n_0)}} &&
              {\VVVV _Y\mathsf{2}}\ar[dr]|-{\VVVV _Y(s^0)}  \\                                            
          &&& {\VVVV _Y\mathsf{1}} && {\VVVV _Y\mathsf{1}} }$$					
\normalsize					
which proves that $\VVVV _Y$ satisfies the identity equation of Definition \ref{Presentation}.

As mentioned before, Corollary \ref{CMEprecom} is Theorem 3.5 of \cite{CME}. Below, it is proved as a consequence of Corollary \ref{preprepseudocomonadic}.

\begin{coro}[\cite{CME}]\label{CMEprecom}
Let $(L\dashv U, \eta , \varepsilon , s, t)$ be a biadjunction. The pseudofunctor $L$ is pseudoprecomonadic if and only if, for every object $Y$ of $\bbb $, the $2$-functor $\VVVV _Y : \dot{\Delta}\to \bbb $ is of effective descent. 
\end{coro}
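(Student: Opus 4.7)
The plan is to deduce Corollary \ref{CMEprecom} from Corollary \ref{preprepseudocomonadic} by recognizing the diagram $\DDDD^X_Y$ in $\CAT$ as (essentially) $\bbb(X,\VVVV_Y-)$, and then invoking the Yoneda argument of Lemma \ref{Yonchange} together with the pointwise construction of bilimits.

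First I would, for each object $X$ of $\bbb$, construct a pseudonatural equivalence
\[
\omega^X : \DDDD^X_Y\circ\j \longrightarrow \bbb(X,\VVVV_Y\circ\j-).
\]
The components on objects of $\Delta$ are given by the pseudonatural equivalence $\chi_{{}_{(X,-)}}: \ccc(LX,-)\simeq \bbb(X,U-)$ associated to the biadjunction (see Remark \ref{addd}), applied at the objects $LY$, $LULY$, $LULULY$. The invertible components of $\omega^X$ at the morphisms $d^0, d^1, s^0, \partial^0, \partial^1, \partial^2$ of $\Delta$ are obtained from the pseudonaturality $2$-cells of $\chi$ together with the isomorphisms $\llll$ of the pseudofunctor $L$; the verification that the images of the $2$-cells $\vartheta$, $\sigma_{ik}$, $n_0$, $n_1$ match on both sides is a direct but tedious calculation using the biadjunction equations of Definition \ref{ad} (essentially the same calculation, run in reverse, as that establishing $\DDDD^X_Y$ is well-defined in Definition \ref{maindiagram}).

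Next I would extend $\omega^X$ to a pseudonatural equivalence on $\dot{\Delta}$ by observing that both $\DDDD^X_Y$ and $\bbb(X,\VVVV_Y-)$ send the extra object $\mathsf{0}$ of $\dot{\Delta}$ in a way compatible with $\chi_{{}_{(X,Y)}}: \ccc(LX,LY)\simeq \bbb(X,ULY)$, namely to $\bbb(X,Y)$ and $L_{{}_{X,Y}}$ on the top row (respectively, $\bbb(X,Y)$ and $\bbb(X,\eta_{{}_Y}-)$), and these are identified, up to the coherent isomorphism of $\chi$, via the triangle law. By Lemma \ref{triiviall}, $\DDDD^X_Y$ is of effective descent if and only if $\bbb(X,\VVVV_Y-)$ is of effective descent.

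Now I combine this with the Yoneda argument. By Lemma \ref{Yonchange}, $\VVVV_Y$ is of effective descent in $\bbb$ if and only if $\YYYY\circ\VVVV_Y$ is of effective descent in $[\bbb^{\op},\CAT]_{PS}$. By the pointwise construction of weighted bilimits (the remark at the end of Section \ref{Bilimits}), this is equivalent to the statement that $\bbb(X,\VVVV_Y-)$ is of effective descent in $\CAT$ for \emph{every} object $X$ of $\bbb$. Combining with the previous paragraph, this is equivalent to $\DDDD^X_Y$ being of effective descent for every $X$, which by Corollary \ref{preprepseudocomonadic} is precisely the statement that $L$ is pseudoprecomonadic.

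The main obstacle is the explicit construction of the pseudonatural equivalence $\omega^X$: one must verify that the $2$-cells in $\DDDD^X_Y$, defined in terms of the pseudofunctor isomorphisms $\llll$ and the biadjunction modifications $s,t$, correspond under $\chi$ to the $2$-cells in $\VVVV_Y$ defined directly in terms of $\eta$, $\varepsilon$, $s$, $t$. Since the $2$-cells of $\VVVV_Y$ and $\DDDD^X_Y$ have already been extracted from the same underlying biadjunction data, this comparison is in principle routine, and amounts to repeatedly using naturality of $\chi$, the triangle equations of the biadjunction, and the coherence for $\llll$. Once $\omega^X$ is in hand, the remaining Yoneda reduction is immediate.
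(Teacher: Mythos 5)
Your proposal is correct and follows essentially the same route as the paper: both reduce the statement to Corollary \ref{preprepseudocomonadic} by exhibiting a pseudonatural equivalence $\DDDD^X_Y\simeq\bbb(X,\VVVV_Y-)$ built from the components of $\chi$ (the paper writes this equivalence $\iota$ down explicitly on all of $\dot{\Delta}$, with $(\iota_d)_f=\eta_f^{-1}$ coming from the pseudonaturality of $\eta$ rather than the triangle law, a minor point), and then apply Lemma \ref{triiviall} together with the Yoneda reduction of Lemma \ref{Yonchange}. No gaps.
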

\begin{proof}
On one hand, by Corollary \ref{preprepseudocomonadic}, $L$ is pseudoprecomonadic if and only if $\DDDD ^X_Y:\dot{\Delta }\to\CAT $  is of effective descent for every pair $(X,Y)$ of objects in $\bbb $. On the other hand, by Lemma \ref{Yonchange}, $\VVVV _Y $ is of effective descent if and only if 
$\bbb (X, \VVVV _Y-): \dot{\Delta}\to\CAT $
is of effective descent for every object $X$ in $\bbb $.

Therefore, by Lemma \ref{triiviall}, to complete our proof, we just need to verify that $ \DDDD ^X_Y\simeq \bbb (X, \VVVV _Y-)$. Indeed, there is a pseudonatural equivalence
$$\iota : \DDDD ^X_Y\longrightarrow \bbb (X, \VVVV _Y-) $$
induced by $\chi : \ccc(L-,-)\simeq\bbb (-, U-)$ such that
\begin{eqnarray*}
\iota _ {{}_{\mathsf{0}}} &:=& \Id_ {{}_{\bbb (X,Y) }} \\
\iota _ {{}_{\mathsf{1}}} &:=& \chi _ {{}_{(X, LY)}}:  \ccc(LX, LY)\to \bbb (X, ULY) \\
\psi _ {{}_{\mathsf{2}}} &:=&  \chi _ {{}_{(X, LULY)}}: \ccc(LX, LULY)\to \bbb (X, ULULY) \\
\psi _ {{}_{\mathsf{3}}} &:=&  \chi _ {{}_{(X, LULULY)}}: \ccc(LX, LULULY)\to \bbb (X, ULULULY)
\end{eqnarray*}
\begin{equation*}
\begin{aligned}
\left(\iota _ {{}_{d}}\right)_{{}_f} &:= \eta_{{}_{f}}^{-1}\\
\left(\iota _ {{}_{d^0}}\right)_{{}_f} &:= \eta_ {{}_{U(f)\eta _ {{}_{X}}}}^{-1}\\
\left(\iota _ {{}_{d^1}}\right)_{{}_f} &:=  \uuuu _ {{}_{L(\eta _ {{}_{Y}})f }}\ast\id _ {{}_{\eta_ {{}_{X}} }} 
\end{aligned}
\qquad
\begin{aligned}
\left(\iota _ {{}_{\partial ^0}}\right)_{{}_f} &:=  \eta_ {{}_{U(f)\eta _ {{}_{X}}}}^{-1}\\
\left(\iota _ {{}_{\partial ^1}}\right) _ {{}_f} &:= \uuuu _ {{}_{L(\eta _ {{}_{ULY}})f }}\ast\id _ {{}_{\eta_ {{}_{X}} }}
\end{aligned}
\qquad
\begin{aligned}
\left( \iota _ {{}_{\partial ^2}}\right) _ {{}_f} &:= \uuuu _ {{}_{LUL(\eta _ {{}_{Y}})f }}\ast\id _ {{}_{\eta_ {{}_{X}} }} \\
\left(\iota _ {{}_{s^0}}\right)_{{}_f} &:=  \uuuu _ {{}_{\varepsilon _ {{}_{LY}} f }}\ast\id _ {{}_{\eta_ {{}_{X}} }}  
\end{aligned}
\end{equation*}
\normalsize
\end{proof}

We assume the existence of a biadjunction $J\dashv G $ in the commutative triangles below and study its counit and unit, provided that the biadjunctions $(E\dashv R, \rho , \mu , v,w), (L\dashv U, \eta , \varepsilon , s,t)$ induce the same pseudocomonad.
We start with the unit.

\begin{theo}[Unit]\label{unitunit}
Assume that $(E\dashv R, \rho , \mu , v,w), (L\dashv U, \eta , \varepsilon , s,t), (J\dashv G, \bar{\eta }, \bar{\varepsilon }, \bar{s}, \bar{t})$ are biadjunctions
such that the triangles
$$\xymatrix{  \aaa\ar[rr]^-{J}\ar[dr]_-{E}&&\bbb\ar[dl]^-{L} & \aaa\ar[rr]^-{J}&&\bbb\\
&\ccc & & &\ccc\ar[ul]^-{R}\ar[ur]_-{U}   & }$$
are commutative. If $(E\dashv R, \rho , \mu , v,w)$, $(L\dashv U, \eta , \varepsilon , s,t)$ induce the same pseudocomonad $\TTTTT $, then the following statements are equivalent:
\begin{enumerate}
\item The unit $\overline{\eta }: \Id _ {{}_\aaa}\longrightarrow GJ $ is a pseudonatural equivalence;
\item $E$ is pseudoprecomonadic;
\item The following $2$-functor is of effective descent for every pair of objects $(A,B)$ in $\aaa $
$$\widehat{\DDDD } ^A_B:\dot{\Delta }\to\CAT $$ 
\small
$$\xymatrix{\aaa (A, B) \ar[d]|-{E_{{}_{A,B}}}\\  \ccc (EA, EB)\ar@<-2.5ex>[rrr]_-{\ccc(EA, E(\rho _{{}_B} ))}\ar@<2.5ex>[rrr]^-{E_{{}_{{}_{A,REB}}}\circ\hspace{0.2em} \xi_ {{}_{{}_{(A,EB)}}}} &&& \ccc (EA, EREB )\ar[lll]|-{\ccc (EA,\mu _ {{}_{EB}})}
\ar@<-2.5 ex>[rrrr]_-{\ccc (EA, ERE(\rho _{{}_B} ))}\ar[rrrr]|-{\ccc (EA, E(\rho _ {{}_{REB}}))}\ar@<2.5ex>[rrrr]^-{E_{{}_{{}_{A,(RE)^2B}}}\circ \hspace{0.2em}\xi_ {{}_{{}_{(A,EREB)}}}} &&&& \ccc (EA, E(RE)^2B) }$$
\normalsize
in which $\xi : \ccc (E-,-)\simeq \aaa (-,R-) $ is the pseudonatural equivalence induced by the biadjunction $(E\dashv R, \rho , \mu , v,w)$ described in Remark \ref{addd}.
\item For each object $B$ of $\aaa $, the following diagram is of effective descent.
$$\widehat{\VVVV} _ {{}_{B}}: \dot{\Delta }\to\aaa $$
\small
\[\xymatrix{  B \ar[r]^-{\rho _ {{}_{B}}} & REB\ar@<-2.5ex>[rrr]_-{RE(\rho _{{}_B})}\ar@<2.5ex>[rrr]^-{\rho_ {{}_{REB}}} &&& REREY\ar[lll]|-{R(\mu _ {{}_{EB}})}
\ar@<-2.5 ex>[rrr]_-{RERE(\rho _ {{}_{B}})}\ar[rrr]|-{RE(\rho _ {{}_{REB}})}\ar@<2.5ex>[rrr]^-{\rho_ {{}_{REREB}}} &&& REREREB }\]
\normalsize
\end{enumerate}
\end{theo}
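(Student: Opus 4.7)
My plan is to split the four-way equivalence as (2) $\Leftrightarrow$ (3), (2) $\Leftrightarrow$ (4), and (1) $\Leftrightarrow$ (2), with only the last requiring real work.

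By inspection of the definitions, the diagram $\widehat{\DDDD}^A_B$ is exactly the diagram $\DDDD^A_B$ of Definition \ref{maindiagram} associated to the biadjunction $(E \dashv R, \rho, \mu, v, w)$. Therefore (2) $\Leftrightarrow$ (3) is a direct application of Corollary \ref{preprepseudocomonadic} to $E \dashv R$. Likewise, $\widehat{\VVVV}_B$ is the $\VVVV_B$ of Definition \ref{almostabsolutediagram} built from the same biadjunction, so (2) $\Leftrightarrow$ (4) is an immediate instance of Corollary \ref{CMEprecom}.

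The substantive implication is (1) $\Leftrightarrow$ (2), for which I would rely on the standing hypothesis of the ``context of Corollary \ref{PMAIN}'' that $L$ is pseudoprecomonadic. By Remark \ref{unitbiadjunction}, (1) holds if and only if $J$ is locally an equivalence. Lemma \ref{basicstuff} applied to each of the two biadjunctions produces comparison pseudofunctors $\KKKK_E : \aaa \to \mathsf{Ps}\textrm{-}\TTTTT\textrm{-}\CoAlg$ and $\KKKK_L : \bbb \to \mathsf{Ps}\textrm{-}\TTTTT\textrm{-}\CoAlg$ satisfying $\mathsf{L} \KKKK_L \simeq L$, $\mathsf{L} \KKKK_E \simeq E$, $\KKKK_L U \simeq \mathsf{U}$ and $\KKKK_E R \simeq \mathsf{U}$. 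Using the commutativity of the two triangles, a short computation gives $\mathsf{L}(\KKKK_L J) \simeq L J \simeq E$ and $(\KKKK_L J) R = \KKKK_L (JR) \simeq \KKKK_L U \simeq \mathsf{U}$, so by the essential uniqueness of the Eilenberg--Moore comparison we obtain a pseudonatural equivalence $\KKKK_E \simeq \KKKK_L \circ J$.

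Taking hom-categories, this yields for each pair of objects $A, B$ of $\aaa$ a factorization of the form $(\KKKK_E)_{A,B} \simeq (\KKKK_L)_{JA, JB} \circ J_{A,B}$. Because $L$ is pseudoprecomonadic, $(\KKKK_L)_{JA, JB}$ is an equivalence, so the composite is an equivalence if and only if $J_{A,B}$ is. Ranging over all $(A,B)$, this is precisely (2) $\Leftrightarrow$ (1). The main obstacle I foresee is verifying the pseudonatural equivalence $\KKKK_E \simeq \KKKK_L J$ coherently --- this requires unpacking the explicit formulas in Lemma \ref{basicstuff} together with the pseudocoalgebra axioms of Definition \ref{PSEUDOCOALGBRAS}, which is notationally heavy but routine.
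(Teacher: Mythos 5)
Your proposal is correct and follows essentially the same route as the paper: the paper also reduces (1) to ``$J$ is locally an equivalence'' via Remark \ref{unitbiadjunction}, identifies the comparison pseudofunctor of $E\dashv R$ with $\KKKK\circ J$ by the universal property of $\mathsf{Ps}\textrm{-}\TTTTT\textrm{-}\CoAlg$ (using both commutative triangles), uses the standing hypothesis that $L$ is pseudoprecomonadic to transfer ``locally an equivalence'' between $J$ and $\KKKK\circ J$, and then quotes Corollary \ref{preprepseudocomonadic} and Corollary \ref{CMEprecom} for (3) and (4). You correctly flagged the implicit hypothesis from the ``context of Corollary \ref{PMAIN}'' that the paper's proof invokes as ``by hypothesis.''
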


\begin{proof}
By Remark \ref{unitbiadjunction}, the unit $\bar{\eta } $ is a pseudonatural equivalence if and only if $J$ is locally an equivalence. Moreover, by the hypothesis and the universal property of the $2$-category of pseudocolagebras, we have the following diagram
\small
$$\xymatrix{  \aaa\ar[r]|{J} \ar@/^5ex/[rr]|{\widetilde{\KKKK } }\ar@{ { }}@/^3ex/[rr]|{\cong }\ar[rrd]|{E}&
\bbb\ar[r]|-{\KKKK }\ar[rd]|{L}&
\mathsf{Ps}\textrm{-}\TTTTT\textrm{-}\CoAlg\ar[d]|{\mathsf{L}}\\
&& \ccc	 }$$
\normalsize 
such that $\widetilde{\KKKK }, \KKKK $ are the comparison pseudofunctors. 

Since, by hypothesis, we know that $\KKKK $ is locally an equivalence, we conclude that $J $ is locally an equivalence if and only if  $\widetilde{\KKKK }$ is locally an equivalence. Thereby, to conclude, we just need to apply the characterizations of pseudoprecomonadic pseudofunctors: that is to say, Corollary \ref{preprepseudocomonadic} and Corollary \ref{CMEprecom}.
\end{proof}

Before studying the counit, for future references, we need the following result about the diagram $\VVVV _ Y: \dot{\Delta }\to \bbb $ in the context of biadjoint triangles.

\begin{lem}\label{k}
Let 
$$\xymatrix{  \aaa\ar[rr]^-{J}\ar[dr]_-{E}&&\bbb\ar[dl]^-{L} & \aaa\ar[rr]^-{J}&&\bbb\\
&\ccc  & & &\ccc\ar[ul]^-{R}\ar[ur]_-{U}   & }$$
be commutative triangles of pseudofunctors such that we have biadjunctions $(E\dashv R, \rho , \varepsilon , v,w)$ and $(L\dashv U, \eta , \varepsilon , s,t)$  inducing the same pseudocomonad
$\TTTTT = (\TTTTT, \varpi, \varepsilon, \Lambda, \delta , \mathsf{s}  ) $. 
We consider the diagram $\AAAA _Y : \Delta \to \aaa $. Then, for each object $Y$ of $\bbb $, there is a pseudonatural isomorphism
$$\zeta ^{{}^Y}: J\circ \AAAA _ Y\longrightarrow \VVVV _ Y\circ\j .$$
\end{lem}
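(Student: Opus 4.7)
My plan is to construct $\zeta^Y$ componentwise, exploiting the strict equalities $LJ=E$ and $JR=U$ together with the identity of induced pseudocomonads to make almost every component trivial, so that the only real work is to build one invertible $2$-cell (and its analogue) and then check the modification axioms.

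On objects, $J\circ\AAAA_Y$ and $\VVVV_Y\circ\j$ agree strictly (e.g.\ $J\AAAA_Y(\mathsf{1})=J(RLY)=ULY=\VVVV_Y\j(\mathsf{1})$), so I take all $1$-cell components $\zeta^Y_{\mathsf{i}}$ to be identities. The same identifications force $J\AAAA_Y(f)=\VVVV_Y\j(f)$ for $f\in\{d^1,s^0,\partial^1,\partial^2\}$ (for instance $J(RL(\eta_Y))=UL(\eta_Y)$), so the corresponding components $\zeta^Y_f$ can be taken as identity $2$-cells.

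The only non-trivial components are $\zeta^Y_{d^0}$ and $\zeta^Y_{\partial^0}$. Starting from $\AAAA_Y(d^0)=RL(U(\mu_{LY})\eta_{JRLY})\rho_{RLY}$, the pseudofunctor constraint $\jjjj$ splits $J(\AAAA_Y(d^0))$ as $UL(U(\mu_{LY})\eta_{JRLY})\cdot J(\rho_{RLY})$. The equality $\mu=\varepsilon$ (same pseudocomonad) together with $JR=U$ rewrites the inner composite as $U(\varepsilon_{LY})\eta_{ULY}$; the triangle modification $t_{LY}$ of $L\dashv U$ makes it $\cong\id_{ULY}$, and $\llll$ then yields $UL(U(\mu_{LY})\eta_{JRLY})\cong\id_{ULULY}$. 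Separately, applying $\chi^{-1}$ sends $J(\rho_{RLY})$ to $\varepsilon_{LULY}\cdot E(\rho_{RLY})$ and $\eta_{ULY}$ to $\varepsilon_{LULY}\cdot L(\eta_{ULY})$; the triangle modifications $v_{LY}$ (using $\mu=\varepsilon$) and $s_{ULY}^{-1}$ make both $\cong\id_{LULY}$, so local essential injectivity of $\chi$ supplies a canonical invertible $2$-cell $J(\rho_{RLY})\cong\eta_{ULY}$ in $\bbb$. Pasting these pieces produces $\zeta^Y_{d^0}\colon\eta_{ULY}\Rightarrow J(\AAAA_Y(d^0))$, and $\zeta^Y_{\partial^0}$ is given by the same recipe with $Y$ replaced by $ULY$.

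For each $2$-cell of $\Delta$ (namely $\sigma_{01},\sigma_{02},\sigma_{12},n_0,n_1$), the modification axiom then reduces to a direct comparison between the pastings defining $\AAAA_Y(\alpha)$ in Definition \ref{trianglediagram} and those defining $\VVVV_Y(\alpha)$ in Definition \ref{almostabsolutediagram}: the extra $E\dashv R$ data $v,w,\rrrr,\rho,\mu$ occurring on the $\AAAA_Y$-side is consumed, after applying $J$, by the equalities $E\rho R=L\eta U$, $\mu=\varepsilon$ and the identification of the remaining coherence modifications $\Lambda,\delta,\mathsf{s}$, together with the pieces that build $\zeta^Y_{d^0}$ and $\zeta^Y_{\partial^0}$. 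The technical core, and the main obstacle, is selecting the correct pasting for $\zeta^Y_{d^0}$ in the previous step: once that choice is made, each axiom collapses to a single instance of the pseudocomonad coherences (Definition \ref{PSEUDOCOALGBRAS}) together with the triangle identities of the two biadjunctions.
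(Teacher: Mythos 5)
Your proposal is correct and follows the same skeleton as the paper's proof: identity $1$-cell components, identity $2$-cell components at $d^1,s^0,\partial^1,\partial^2$ (justified, as you note, by $JR=U$, $LJ=E$ and the identification of the induced pseudocomonads), and a non-trivial component only at $d^0$ and $\partial^0$, built by pasting $J$ of a $2$-cell coming from $t_{{}_{LY}}$ and the constraints $(\rrrr\llll)$ onto an invertible $2$-cell $J(\rho_{{}_{RLY}})\cong\eta_{{}_{ULY}}$. The one genuine divergence is how that last $2$-cell is produced. The paper extracts it from the essential uniqueness of the comparison pseudofunctor: reusing the diagram in the proof of Theorem \ref{unitunit}, $\widetilde{\KKKK}\cong\KKKK\circ J$ as liftings of $E$ to $\mathsf{Ps}\textrm{-}\TTTTT\textrm{-}\CoAlg$, and the $2$-cell $\yyyy_{{}_{Y}}$ is read off from that pseudonatural isomorphism, so the coherence it must satisfy is largely inherited. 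You instead observe that $\chi^{-1}$ sends both $J(\rho_{{}_{RLY}})$ and $\eta_{{}_{ULY}}$ to endo-$1$-cells of $LULY$ isomorphic to the identity via $v_{{}_{RLY}}$ (note the index: it is $v_{{}_{RLY}}$, not $v_{{}_{LY}}$) and $s_{{}_{ULY}}$, and lift the resulting isomorphism through the local full-faithfulness of $\chi$ (``full-faithfulness'' is the property you want, rather than ``essential injectivity''). This is more elementary and avoids any appeal to the $2$-category of pseudocoalgebras, but it shifts the burden onto the direct verification of the naturality axioms against $\sigma_{01},\sigma_{02},\sigma_{12},n_0,n_1$, which you correctly identify as the technical core; your claim that these reduce to the pseudocomonad coherences and the triangle identities of the two biadjunctions is plausible and consistent with how the paper's explicit formulas for $\AAAA_Y$ and $\VVVV_Y$ are built, so I see no gap.
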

\begin{proof}
Again, we have the same diagram of the proof of Theorem \ref{unitunit}.
In particular, for each object $Y$ of $\bbb$, there is an invertible $2$-cell 
$\yyyy _{{}_{Y}}: J(\rho _ {{}_{RLY}})\Rightarrow \eta _ {{}_{ULY}} $.
Thereby, we can define $\zeta ^{{}^Y} : J\circ \AAAA _ Y\longrightarrow \VVVV _ Y\circ\j $
such that the components $\zeta _ {{}_{\mathsf{1}}}, \zeta _ {{}_{\mathsf{2}}}, \zeta _ {{}_{\mathsf{3}}}$ are identity $1$-cells, the components
$\zeta ^{{}^Y} _ {{}_{d^1}} $, $\zeta ^{{}^Y} _ {{}_{s^0}}$, $\zeta ^{{}^Y}_ {{}_{\partial ^1}}$, $\zeta ^{{}^Y} _ {{}_{\partial ^2}} $
are identity $2$-cells and
\small
\begin{center}
\begin{equation*}
\begin{aligned}
\left(\zeta ^{{}^Y}_ {{}_{d^0}}\right) &:=  \yyyy _{{}_{Y}}\cdot J\left(\left((\rrrr\llll )_{{}_{LY}}^{-1}\cdot RL(t_{{}_{LY}})\right)\ast \id _ {{}_{\rho _ {{}_{RLY}}}}\right);
\end{aligned}
\begin{aligned}
\left(\zeta ^{{}^Y}_ {{}_{\partial ^0}}\right) &:=  \yyyy _{{}_{ULY}}\cdot J\left(\left((\rrrr\llll )_{{}_{LULY}}^{-1}\cdot RL(t_{{}_{LY}})\right)\ast \id _ {{}_{\rho _ {{}_{RLULY}}}}\right).
\end{aligned}
\end{equation*}
\end{center}
\normalsize
\end{proof}

\begin{theo}[Counit]\label{counit}
Let $(E\dashv R, \rho , \varepsilon , v,w)$ and $(L\dashv U, \eta , \varepsilon , s,t)$ be biadjunctions inducing the same pseudocomonad
$\TTTTT = (\TTTTT, \varpi, \varepsilon, \Lambda, \delta , \mathsf{s}  )$ such that the triangles of pseudofunctors
$$\xymatrix{  \aaa\ar[rr]^-{J}\ar[dr]_-{E}&&\bbb\ar[dl]^-{L} & \aaa\ar[rr]^-{J}&&\bbb\\
&\ccc  & & &\ccc\ar[ul]^-{R}\ar[ur]_-{U}   & }$$
commute. We assume that $(J\dashv G, \bar{\eta }, \bar{\varepsilon }, \bar{s}, \bar{t})$ is a biadjunction and $L$ is pseudoprecomonadic.
We consider the diagram $\AAAA _Y : \Delta \to \aaa $. Then $J \left\{\dot{\Delta } (\mathsf{0}, \j -), \AAAA _ Y\right\} _ {\bi } $ is the descent object of $J\circ\AAAA _ Y $ for every object $Y$ of $\bbb $ if and only if the counit $\overline{\varepsilon } : JG\longrightarrow\Id _ {{}_{\bbb}} $ is a pseudonatural equivalence.  
\end{theo}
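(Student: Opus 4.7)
Under the hypothesis that $L$ is pseudoprecomonadic and $J\dashv G$ is a biadjunction, Corollary \ref{PMAIN} yields an equivalence $GY\simeq \{\dot{\Delta}(\mathsf{0},\j-),\AAAA_Y\}_{\bi}$ for every object $Y$ of $\bbb$; in particular, $GY$ carries a universal descent cone over $\AAAA_Y$. Applying $J$ to this cone produces a descent cone at $JGY$ over $J\circ \AAAA_Y$, and thus a canonical comparison $1$-cell $\kappa_Y\colon JGY\to \{\dot{\Delta}(\mathsf{0},\j-),J\circ\AAAA_Y\}_{\bi}$ (whenever the codomain exists). By definition, $JGY$ is the descent object of $J\circ\AAAA_Y$ if and only if $\kappa_Y$ is an equivalence. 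The plan is to identify $\kappa_Y$, up to composition with canonical equivalences, with the counit $\bar{\varepsilon}_Y\colon JGY\to Y$.

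First I would observe that, since $L$ is pseudoprecomonadic, Corollary \ref{CMEprecom} implies that $\VVVV_Y\colon \dot{\Delta}\to\bbb$ is of effective descent; that is, the canonical $1$-cell $\nu_Y\colon Y\to \{\dot{\Delta}(\mathsf{0},\j-),\VVVV_Y\circ\j\}_{\bi}$ induced by evaluation at $\VVVV_Y(d)=\eta_Y$ is an equivalence. Combining this with the pseudonatural isomorphism $\zeta^Y\colon J\circ\AAAA_Y\to \VVVV_Y\circ\j$ of Lemma \ref{k} and with Lemma \ref{triiviall}, one deduces that the descent object of $J\circ\AAAA_Y$ exists and is (canonically) equivalent to $Y$. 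Composing $\kappa_Y$ with these equivalences yields a canonical $1$-cell $\widetilde{\kappa}_Y\colon JGY\to Y$, and the claim that $JGY$ is the descent object of $J\circ\AAAA_Y$ is precisely the statement that $\widetilde{\kappa}_Y$ is an equivalence.

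Second, I would identify $\widetilde{\kappa}_Y$ with the counit $\bar{\varepsilon}_Y$. Recall that $\bar{\varepsilon}_Y$ is, by definition, the image of $\id_{GY}$ under the pseudonatural equivalence $\aaa(GY,GY)\simeq \bbb(JGY,Y)$ associated to the biadjunction $J\dashv G$. Unpacking the proof of Theorem \ref{MAIN}, this equivalence is constructed by combining the birepresentability of $\aaa(-,GY)$ as descent object of $\aaa(-,\AAAA_Y-)$ with the effective descent of $\DDDD_Y^{JA}$ and the pseudonatural equivalence $\psi\colon \DDDD_Y^{JA}\circ\j\to \aaa(A,\AAAA_Y-)$ built in that proof; under $L$ pseudoprecomonadic, the latter descent diagram is equivalent (via Lemma \ref{k}, Corollary \ref{CMEprecom} and Lemma \ref{triiviall}) to evaluating at $Y$. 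Tracing the universal descent cone of $GY$ — which corresponds precisely to $\id_{GY}$ under the birepresentability of $GY$ — through these equivalences reproduces exactly the construction of $\widetilde{\kappa}_Y$ above, giving an invertible $2$-cell $\widetilde{\kappa}_Y\simeq \bar{\varepsilon}_Y$.

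Since a pseudonatural transformation is an equivalence iff each component is, and $\widetilde{\kappa}_Y$ is an equivalence iff $\bar{\varepsilon}_Y$ is, the theorem follows. The main obstacle is the identification $\widetilde{\kappa}_Y\simeq \bar{\varepsilon}_Y$: it requires carefully threading the isomorphism data $\zeta^Y$ of Lemma \ref{k}, the comparison equivalence of Corollary \ref{CMEprecom}, and the construction of the biadjunction equivalence in Theorem \ref{MAIN} through the universal property of the bilimit defining $GY$; everything else is bookkeeping with the cited lemmas.
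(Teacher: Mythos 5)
Your proposal is correct and follows essentially the same route as the paper: use Lemma \ref{k} to identify $J\circ\AAAA_Y$ with $\VVVV_Y\circ\j$, use Corollary \ref{CMEprecom} (pseudoprecomonadicity of $L$) to see that $Y$ is the descent object of $\VVVV_Y\circ\j$, and observe that the counit $\bar{\varepsilon}_Y$ is, by the construction in Theorem \ref{MAIN}, exactly the comparison $1$-cell $J\{\dot{\Delta}(\mathsf{0},\j-),\AAAA_Y\}_{\bi}\to Y$. The only difference is that you spell out the identification of the comparison with the counit in more detail, whereas the paper simply asserts it from the constructions of Theorem \ref{MAIN}.
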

\begin{proof}
Actually, this is a corollary of  Lemma \ref{k}, Corollary \ref{PMAIN} and Corollary \ref{CMEprecom}. More precisely, by Lemma \ref{k}, $ J\circ \AAAA _ Y\simeq \VVVV _ Y\circ\j $.
By Corollary \ref{CMEprecom}, since $L$ is pseudoprecomonadic, $\VVVV _ Y $ is of effective descent. Moreover, by the constructions of Theorem \ref{MAIN} (which proves Corollary \ref{PMAIN}), the counit is pointwise defined by the comparison $1$-cells
$$J\left\{ \dot{\Delta }(\mathsf{0}, \j -), \AAAA _ Y \right\}_{\bi }\to Y= \VVVV _ Y\mathsf{0}\simeq \left\{ \dot{\Delta }(\mathsf{0}, \j -), \VVVV _ Y\circ \j \right\} .$$ 
This completes the proof.
\end{proof}

\section{Pseudocomonadicity}\label{CMEE}

Similarly to the $1$-dimensional case, to prove the characterization of pseudocomonadic pseudofunctors employing the biadjoint triangle theorems, we need two results: Lemma \ref{lemassumed} and Proposition \ref{propassumed}, which are proved in \cite{CME} in Lemma 2.3 and Proposition 3.2 respectively.

We start with Lemma \ref{lemassumed}, which is a basic and known property of the diagram $\VVVV _ Y $. It follows from explicit calculations using the definition of descent objects: we give a sketch of the proof below.

\begin{lem}[\cite{CME}]\label{lemassumed}
Let $(L\dashv U, \eta , \varepsilon , s, t)$ be a biadjunction. For each object $Y$ of $\bbb $, the diagram $L\circ\VVVV _Y$ is of absolute effective descent.
\end{lem}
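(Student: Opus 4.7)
The plan is to exhibit $L\circ \VVVV_Y$ as a \emph{pseudo-split} coaugmented cosimplicial object in $\ccc$, with the extra codegeneracies supplied by the counit $\varepsilon$ of the biadjunction $L\dashv U$, and the coherent invertible $2$-cells coming from $s$, $t$, the pseudonaturality of $\varepsilon$ and the pseudofunctoriality constraints of $L,U$. Since all of this splitting data is structural (built from $\varepsilon$, $s$, $t$ and the constraints), it survives application of any pseudofunctor $H:\ccc\to\KKKK$, from which the absolute effective descent property will follow.

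First I would write out $L\circ \VVVV_Y:\dot{\Delta}\to\ccc$ explicitly. Its relevant levels are $LY,\ LULY,\ LULULY,\ LULULULY$, with cofaces induced by $\eta$ and codegeneracies by $\varepsilon$. The crucial observation is that every coface has a pseudo-retraction given by the appropriate component of $\varepsilon$: $\varepsilon_{LY}\cdot L(\eta_Y)\cong \id_{LY}$ via $s_Y^{-1}$; $\varepsilon_{LULY}\cdot L(\eta_{ULY})\cong \id_{LULY}$ via $s_{ULY}^{-1}$; and $\varepsilon_{LULY}\cdot LUL(\eta_Y)\cong L(\eta_Y)\cdot \varepsilon_{LY}\cdot L(\eta_Y)\cong L(\eta_Y)$ via pseudonaturality of $\varepsilon$ at $\eta_Y$ together with $s_Y^{-1}$; and similarly for the next level. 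These together with the biadjunction coherences provide a full set of extra codegeneracies equipping $L\circ \VVVV_Y$ with the standard splitting data of a split coaugmented cosimplicial object.

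Next I would invoke the $2$-categorical analogue of the classical statement ``a split coaugmented cosimplicial object is an absolute equalizer of its cofaces.'' Concretely, using the explicit description of strict descent objects in $\CAT$ from Remark \ref{strictdescentstrict} and the Yoneda argument of Lemma \ref{Yonchange}, it suffices to verify that, for every object $X$ of $\ccc$, the induced cosimplicial object $\ccc(X,L\circ \VVVV_Y-)$ in $\CAT$ is of effective descent. Postcomposing by $\ccc(X,-)$ preserves the splitting above, so the required equivalence of $\ccc(X,LY)$ with the descent category of $\ccc(X,L\circ \VVVV_Y\circ \j-)$ follows by exhibiting the $\varepsilon_{LY}$-induced functor as a quasi-inverse to the comparison, with the coherent $2$-cells spelled out from $s$, $t$ and the naturality data.

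The main obstacle is purely the bookkeeping of coherence: the descent object definition requires several associativity- and identity-type equations among the $2$-cells, and each must be matched against an axiom of the biadjunction. However, once the extra codegeneracies are chosen, the verifications are forced. Absoluteness then comes for free: an arbitrary pseudofunctor $H:\ccc\to\KKKK$ carries every $1$- and $2$-cell used in the splitting to another, so $H\circ L\circ \VVVV_Y$ inherits the same split structure and is therefore of effective descent.
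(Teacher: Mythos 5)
Your proposal is correct and follows essentially the same route as the paper: the paper's proof also reduces, via representables into $\CAT$ and the explicit description of strict descent objects, to checking that the comparison functor $f\mapsto (\mathcal{F}L(\eta_{{}_Y})f, (\mathcal{F}L\eta)_{{}_{\eta_{{}_Y}}}\ast\id_{{}_f})$ is an equivalence, and the ``straightforward calculation'' it invokes is precisely the contraction by $\varepsilon$ (together with $s$ and the pseudonaturality of $\varepsilon$ at $\eta_{{}_Y}$) that you package as a pseudo-splitting of the coaugmented cosimplicial object. Your observation that the splitting data is built only from $\varepsilon$, $s$, $t$ and the constraints, hence is preserved by any pseudofunctor, is exactly what makes the descent property absolute in the paper's argument as well.
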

\begin{proof}
Trivially, given a pseudofunctor $\mathcal{F}: \ccc\to\mathfrak{Z} $, we can see $\mathcal{F}\circ L\circ\VVVV _Y$ as a $2$-functor, taking, if necessary, the obvious pseudonaturally equivalent version of $\mathcal{F}\circ L\circ\VVVV _ Y $. Then, for each objects $Z$ of $\mathfrak{Z} $, by Remark \ref{strictdescentstrict},  we can consider the strict descent object of the $2$-functor explicitly
$$\mathfrak{Z}( Z, \mathcal{F}\circ L\circ\VVVV _Y\circ \j -): \Delta\to\CAT $$
\small
\[\xymatrix{  \mathfrak{Z}(Z, \mathcal{F}LULY)\ar@<-2.5ex>[rrr]_-{\mathfrak{Z}(Z,\mathcal{F}LUL(\eta _{{}_Y}))}\ar@<2.5ex>[rrr]^-{\mathfrak{Z}(Z, \mathcal{F}L(\eta_ {{}_{ULY}}))} &&& \mathfrak{Z}(Z, \mathcal{F}LULULY)\ar[lll]|-{\mathfrak{Z}(Z, \mathcal{F}LU(\varepsilon _ {{}_{LY}}))}
\ar@<-2.5 ex>[rrr]_-{\mathfrak{Z}(Z, \mathcal{F}LULUL(\eta _ {{}_{Y}}))}\ar[rrr]|-{\mathfrak{Z}(Z,\mathcal{F}LUL(\eta _ {{}_{ULY}}))}\ar@<2.5ex>[rrr]^-{\mathfrak{Z}(Z, \mathcal{F}L(\eta_ {{}_{ULULY}}))} &&& \mathfrak{Z}(Z, \mathcal{F}LULULULY) }\]
\normalsize
Thereby, by straightforward calculations, taking Remark \ref{strictdescentstrict} into account,  we conclude that
\small
\begin{eqnarray*}
\mathfrak{Z}(Z, \mathcal{F}LY)&\to & \left\{ \dot{\Delta }(\mathsf{0}, \j -),\mathfrak{Z}( Z, \mathcal{F}\circ L\circ\VVVV _Y\circ \j -)\right\}\\
f&\mapsto & (\mathcal{F}L(\eta _ {{}_{Y}}) f, (\mathcal{F}L\eta) _ {{}_{\eta _ {{}_{Y}} }}\ast \id _ {{}_{f}} )\\
\mathfrak{m} &\mapsto & \id _ {{}_{{}_{\mathcal{F}L(\eta _ {{}_{Y}})}}}\ast \mathfrak{m} 
\end{eqnarray*}
\normalsize
gives an equivalence of categories (and it is the comparison functor). This completes the proof.
\end{proof}

\begin{prop}[\cite{CME}]\label{propassumed}
Let $\TTTTT = (\TTTTT, \varpi, \varepsilon, \Lambda, \delta , \mathsf{s}  ) $  be a pseudocomonad on $\ccc $. The forgetful pseudofunctor 
$L: \mathsf{Ps}\textrm{-}\TTTTT\textrm{-}\mathsf{CoAlg}\to  \ccc $ 
creates absolute effective descent diagrams.
\end{prop}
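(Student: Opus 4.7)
The plan is to verify separately the two conditions in the definition of ``creates absolute effective descent diagrams'': the lifting property and the reflection property.

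For the lifting, given $\AAAA:\Delta\to\mathsf{Ps}\textrm{-}\TTTTT\textrm{-}\CoAlg$ and an absolute effective descent diagram $\DDDD:\dot{\Delta}\to\ccc$ with $\mathsf{L}\circ\AAAA\simeq\DDDD\circ\j$, the task is to endow $\DDDD\mathsf{0}$ with a pseudocoalgebra structure compatible with $\AAAA$. The coaction maps $\varrho_{\AAAA\mathsf{n}}$, together with their coherence $2$-cells $\varrho_{\AAAA(f)}$ for morphisms $f$ of $\Delta$, furnish descent data for $\TTTTT\circ\DDDD\circ\j$. Since $\DDDD$ is of absolute effective descent, so are $\TTTTT\circ\DDDD$, $\TTTTT^2\circ\DDDD$ and $\TTTTT^3\circ\DDDD$; in particular, $\TTTTT\DDDD\mathsf{0}$ is the descent object of $\TTTTT\circ\DDDD\circ\j$. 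The universal property therefore produces a morphism $\varrho:\DDDD\mathsf{0}\to\TTTTT\DDDD\mathsf{0}$. Factoring analogously through $\TTTTT\DDDD\mathsf{0}$ and $\TTTTT^2\DDDD\mathsf{0}$ yields the coherence invertible $2$-cells $\varsigma$ and $\Omega$, and the pseudocoalgebra axioms of Definition~\ref{PSEUDOCOALGBRAS} are forced by uniqueness of factorizations through $\TTTTT^3\DDDD\mathsf{0}$. The desired $\BBBB:\dot{\Delta}\to\mathsf{Ps}\textrm{-}\TTTTT\textrm{-}\CoAlg$ is then defined by declaring $\BBBB\mathsf{0}$ to be this pseudocoalgebra, taking $\BBBB\circ\j=\AAAA$, and promoting the morphisms of $\DDDD$ out of (and into) $\DDDD\mathsf{0}$ to $\TTTTT$-pseudomorphisms by using the same universal-property argument applied to $\TTTTT\DDDD\mathsf{0}$.

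For the reflection, suppose $\DDDD:\dot{\Delta}\to\mathsf{Ps}\textrm{-}\TTTTT\textrm{-}\CoAlg$ satisfies that $\mathsf{L}\circ\DDDD$ is of absolute effective descent. By Lemma~\ref{Yonchange}, to show that $\DDDD$ is of effective descent it suffices to show that for every pseudocoalgebra $\mathsf{x}$ the hom-category $\mathsf{Ps}\textrm{-}\TTTTT\textrm{-}\CoAlg(\mathsf{x},\DDDD\mathsf{0})$ is the descent object of $\mathsf{Ps}\textrm{-}\TTTTT\textrm{-}\CoAlg(\mathsf{x},\DDDD\circ\j-)$. Proposition~\ref{diagrampseudocomonad} presents $\mathsf{Ps}\textrm{-}\TTTTT\textrm{-}\CoAlg(\mathsf{x},-)$ as a strict descent object of a diagram in $\CAT$ manufactured from $\ccc(\mathsf{L}\mathsf{x},-)$ and $\TTTTT$. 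Since $\ccc(\mathsf{L}\mathsf{x},-)$ preserves absolute effective descent diagrams by definition, combining this with the pointwise computation of strict descent objects in $\CAT$ (Remark~\ref{strictdescentstrict}) reduces the claim to the commutativity of descent objects with descent objects in $\CAT$, which is standard for bilimits.

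The main obstacle is the verification in the lifting part: one must show that $\varrho$, $\varsigma$ and $\Omega$ as constructed satisfy both axioms of Definition~\ref{PSEUDOCOALGBRAS}, and that the assignment $\BBBB\circ\j=\AAAA$ can be arranged on the nose rather than merely up to equivalence. Both ultimately follow from the uniqueness (up to unique invertible $2$-cell) of factorizations through the absolute descent objects $\TTTTT^n\DDDD\mathsf{0}$ for $n=1,2,3$, but require careful bookkeeping with the modifications $\Lambda$, $\delta$, $\mathsf{s}$ of the pseudocomonad and with the pseudofunctor constraints of $\TTTTT$.
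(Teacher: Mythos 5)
The paper itself contains no proof of this proposition --- it is imported from \cite{CME} (Proposition 3.2 there) --- so there is no internal argument to compare yours against; judged on its own terms, your outline is the standard creation argument and is essentially the proof given in \cite{CME}. The ingredients are correctly identified: absoluteness of $\DDDD$ makes each $\TTTTT^{n}\circ\DDDD$ ($n=1,2,3$) of effective descent, so the coactions $\varrho_{\AAAA\mathsf{n}}$ together with the pseudomorphism $2$-cells $\varrho_{\AAAA(f)}$ assemble into descent data whose factorizations through $\TTTTT^{n}\DDDD\mathsf{0}$ yield $\varrho$, $\varsigma$ and $\Omega$, and the pseudocoalgebra axioms follow from the $2$-dimensional part of the universal property (full faithfulness of the comparison functors into the descent categories) --- note only that the identity axiom is checked at level $\TTTTT\DDDD\mathsf{0}$, not $\TTTTT^{3}\DDDD\mathsf{0}$, and that ``uniqueness of factorizations'' should be read as this full faithfulness rather than literal uniqueness. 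For the reflection half, combining Lemma \ref{Yonchange}, the presentation of Proposition \ref{diagrampseudocomonad}, and the interchange of bi-descent objects with bi-descent objects in $\CAT$ is a legitimate route: the representables $\ccc(\mathsf{L}\mathsf{x},-)$ and their composites with $\TTTTT^{k}$ see $\mathsf{L}\circ\DDDD$ as effective descent precisely because it is absolute. What remains --- and what you flag yourself --- is the coherence bookkeeping with $\Lambda$, $\delta$, $\mathsf{s}$ and the constraint cells of $\TTTTT$, together with transporting structure across the equivalence $\mathsf{L}\circ\AAAA\simeq\DDDD\circ\j$; that is genuinely where the length of the proof in \cite{CME} lies, but your outline contains no wrong step.
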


In this section, henceforth we work within the following setting (and notation): given a biadjunction $(E\dashv R, \rho , \mu , v,w)$, recall that, by Lemma \ref{basicstuff},
it induces  a biadjunction, herein denoted by $(L\dashv U, \eta , \varepsilon , s, t)$. We also get commutative triangles
\small
$$\xymatrix{\aaa \ar[r]^-{\KKKK}\ar[rd]_-{E}& \mathsf{Ps} \textrm{-} \TTTTT\textrm{-}\CoAlg \ar[d]^{L } &&\ccc \ar[r]^-{R}\ar[rd]_-{U}& \aaa \ar[d]^{\KKKK }\\
&\ccc &&& \mathsf{Ps} \textrm{-} \TTTTT\textrm{-}\CoAlg  }$$ 
\normalsize
in which, clearly, the biadjunctions $E\dashv R$, $L\dashv U $ induce the same pseudocomonad $\TTTTT$. In this context, if the  comparison pseudofunctor $\KKKK $ is a biequivalence, we say that $E$ is \textit{pseudocomonadic}. In other words, we say that $E $ is pseudocomonadic if there is a pseudofunctor $ G:\mathsf{Ps}\textrm{-}\TTTTT\textrm{-}\CoAlg\to\aaa $ such that $G\circ\KKKK\simeq \Id _ {{}_{\aaa }} $ and $\KKKK\circ G\simeq \Id _ {{}_{ \mathsf{Ps}\textrm{-}\TTTTT\textrm{-}\CoAlg}}$.

Of course, in the triangle above, the forgetful pseudofunctor $L$ is always pseudocomonadic. In particular, $L$ is always pseudoprecomonadic. Therefore the triangle satisfies the basic hypothesis of Corollary \ref{PMAIN}. 

Observe that, to verify the pseudocomonadicity of a left biadjoint pseudofunctor $L$, we can do it in three steps:
\begin{enumerate}
\item Verify whether $\KKKK $ has a right biadjoint via Corollary \ref{PMAIN};
\item If it does, the next step would be to verify whether the counit of the biadjunction $\KKKK\dashv G $ is a pseudonatural equivalence via Theorem \ref{counit};
\item The final step would be to verify whether the unit of the biadjunction $\KKKK\dashv G $ is a pseudonatural equivalences via Theorem \ref{unitunit}.
\end{enumerate}
These are precisely the steps used below.

\begin{theo}[Pseudocomonadicity~\cite{CME}]\label{pseudocomonadicityy}
A left biadjoint pseudofunctor $E: \aaa\to\ccc $ is pseudocomonadic if and only if it creates absolute effective descent diagrams.
\end{theo}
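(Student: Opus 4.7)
The plan is to prove each implication using the three-step strategy indicated in the discussion just before the theorem. For the forward implication, suppose $E$ is pseudocomonadic, so the comparison pseudofunctor $\KKKK:\aaa\to\mathsf{Ps}\textrm{-}\TTTTT\textrm{-}\CoAlg$ is a biequivalence and $E=L\circ\KKKK$; since $L$ creates absolute effective descent diagrams by Proposition \ref{propassumed}, and biequivalences trivially create, preserve and reflect such diagrams, the composition $E$ creates absolute effective descent diagrams.

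For the converse, assume $E$ creates absolute effective descent diagrams. The first step is to obtain a right biadjoint to $\KKKK$ via Corollary \ref{PMAIN}. Given an object $\mathsf{z}$ of $\mathsf{Ps}\textrm{-}\TTTTT\textrm{-}\CoAlg$, Lemma \ref{lemassumed} asserts that $L\circ\VVVV_{\mathsf{z}}:\dot\Delta\to\ccc$ is absolute effective descent, and Lemma \ref{k} yields $E\circ\AAAA_{\mathsf{z}}=L\circ\KKKK\circ\AAAA_{\mathsf{z}}\simeq L\circ\VVVV_{\mathsf{z}}\circ\j$. The creation hypothesis on $E$ then supplies a diagram $\BBBB:\dot\Delta\to\aaa$ with $\BBBB\circ\j=\AAAA_{\mathsf{z}}$ and $E\circ\BBBB\simeq L\circ\VVVV_{\mathsf{z}}$, and the reflection half of creation forces $\BBBB$ itself to be of effective descent. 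This produces the descent object required by Corollary \ref{PMAIN}, so $\KKKK$ acquires a right biadjoint $G$ with $G\mathsf{z}\simeq\BBBB\mathsf{0}$.

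The second step is to show via Theorem \ref{counit} that the counit of $\KKKK\dashv G$ is a pseudonatural equivalence. Since $L\circ(\KKKK\circ\BBBB)\simeq L\circ\VVVV_{\mathsf{z}}$ is absolute effective descent and $L$ reflects absolute effective descent diagrams by Proposition \ref{propassumed}, $\KKKK\circ\BBBB$ is of effective descent; consequently $\KKKK(\BBBB\mathsf{0})$ realizes the descent object of $\KKKK\circ\BBBB\circ\j=\KKKK\circ\AAAA_{\mathsf{z}}$, which is exactly the hypothesis of Theorem \ref{counit}. The third step is the unit, via Theorem \ref{unitunit}: it suffices to show $E$ is pseudoprecomonadic, and by Corollary \ref{CMEprecom} this amounts to $\widehat{\VVVV}_{B}:\dot\Delta\to\aaa$ being of effective descent for every $B$ in $\aaa$. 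Lemma \ref{lemassumed} applied to the biadjunction $E\dashv R$ shows $E\circ\widehat{\VVVV}_{B}$ is absolute effective descent, and reflection under $E$ concludes.

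The main delicacy lies in the second step: the diagram $\BBBB$ is constructed only so that $E\circ\BBBB\simeq L\circ\VVVV_{\mathsf{z}}$, but the stronger statement that $\KKKK\circ\BBBB$ is of effective descent is needed to invoke Theorem \ref{counit}. This stronger statement uses precisely the reflection clause of Proposition \ref{propassumed} for $L$, rather than merely the creation hypothesis on $E$.
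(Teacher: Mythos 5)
Your proof is correct and follows essentially the same route as the paper: the forward direction reduces to Proposition \ref{propassumed} via the biequivalence $\KKKK$, and the converse runs the same three steps (Corollary \ref{PMAIN}, then Theorem \ref{counit} using that $L$ reflects absolute effective descent diagrams, then Theorem \ref{unitunit} via Corollary \ref{CMEprecom}) with the same lemmas. Your closing remark correctly isolates the point the paper also relies on, namely that the second step needs the reflection clause for $L$ and not just the creation hypothesis on $E$.
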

\begin{proof}
By Proposition \ref{propassumed}, pseudocomonadic pseudofunctors create absolute effective descent diagrams. Reciprocally, assume that $E$ creates absolute effective descent diagrams.

\begin{enumerate}
\item $\KKKK $ has a right biadjoint $G$:
\end{enumerate}

In this proof, we take a biadjunction $(E\dashv R, \rho , \varepsilon , v,w)$ and assume that $\TTTTT $ is its induced pseudocomonad. Also, we denote by $(L\dashv U, \eta , \varepsilon , s,t)$ the biadjunction induced by $\TTTTT $ (as described above). 

On one hand, by Lemma \ref{k} and Lemma \ref{lemassumed}, for each object $\mathsf{z} $ of $\mathsf{Ps}\textrm{-}\TTTTT\textrm{-}\CoAlg $, the diagram 
$\AAAA _\mathsf{z} : \Delta \to \aaa $   
is such that 
$E\circ\AAAA _\mathsf{z}\simeq L\circ\VVVV _ {\mathsf{z}}\circ\j $
is an absolute effective descent diagram, in which $\VVVV _ {\mathsf{z}}:\dot{\Delta } \to \mathsf{Ps} \textrm{-} \TTTTT\textrm{-}\CoAlg $ is induced by the biadjunction $L\dashv U$.

Therefore, since $E$ creates absolute effective diagrams, we conclude that there is an effective descent diagram $\BBBB _ {\mathsf{z}}$ such that $\AAAA _\mathsf{z} = \BBBB _ {\mathsf{z}}\circ\j $ and $E\circ \BBBB _ {\mathsf{z}}\simeq L\circ\VVVV _ {\mathsf{z}} $.
Thus, by Corollary \ref{PMAIN}, we conclude that $\KKKK $ has a right biadjoint $G$.
\begin{enumerate}\setcounter{enumi}{1}
\item The counit of the biadjunction $\KKKK\dashv G $ is a pseudonatural equivalence:
\end{enumerate}
Since $L\circ\KKKK\circ \BBBB _ {\mathsf{z}}= E\circ \BBBB _ {\mathsf{z}}\simeq L\circ\VVVV _ {\mathsf{z}} $ is of absolute effective descent and $L$ creates absolute effective descent diagrams, we conclude that  $\KKKK\circ \BBBB_\mathsf{z} $ is of effective descent. By Theorem \ref{counit}, it completes this second step.

\begin{enumerate}\setcounter{enumi}{2}
\item The unit of the biadjunction $\KKKK\dashv G $ is a pseudonatural equivalence:
\end{enumerate}
By Lemma \ref{lemassumed}, for every object $A$ of $\aaa $, 
$E\circ \widehat{\VVVV} _A : \dot{\Delta}\to \ccc $
is of absolute effective descent, in which $\widehat{\VVVV} _A$ is induced by the biadjunction $E\dashv R $. Since $E$ creates absolute effective descent diagrams, we get that $\widehat{\VVVV} _A $ is of effective descent. Therefore, by Corollary \ref{CMEprecom}, $E$ is pseudoprecomonadic. By Theorem \ref{unitunit}, it completes the proof of the final step.
\end{proof}

As a consequence of Theorem \ref{pseudocomonadicityy}, 
within the setting of Theorem \ref{MAIN}, if $J$ has a right biadjoint and $ E$ is pseudocomonadic, 
then $J$ is pseudocomonadic as well. Furthermore, it is worth to point out that the second step of 
the proof of Theorem \ref{pseudocomonadicityy} follows directly from the fact that $E$  
preserves the effective descent diagrams $\BBBB _ {{}_{\mathsf{z}}}$ and from the 
pseudocomonadicity of $L$. More precisely, as direct consequence of Lemma \ref{lemassumed}, Theorem \ref{counit} and Proposition \ref{propassumed}, we get:

\begin{coro}[Counit]\label{generalizedcoherence}
Let $(E\dashv R, \rho , \mu , v,w), (L\dashv U, \eta , \varepsilon , s,t)$ be biadjunctions inducing the same pseudocomonad $\TTTTT = (\TTTTT, \varpi, \varepsilon, \Lambda, \delta , \mathsf{s}  ) $ such that
$$\xymatrix{  \aaa\ar[rr]^-{J}\ar[dr]_-{E}&&\bbb\ar[dl]^-{L}\\
&\ccc & }$$
\normalsize
commutes. Assume that $L$ is pseudocomonadic, $J\circ R=U$ and $(J, G, \overline{\varepsilon}, \overline{\eta}, \overline{s}, \overline{t})$ is a biadjunction. The counit
$\overline{\varepsilon }: JG\longrightarrow \Id _ {{}_{\bbb }}$
is a pseudonatural equivalence if and only if, for every object $Y$ of $\bbb$, $E$ preserves the descent object of 
$\AAAA _Y : \Delta \to \aaa $.
\end{coro}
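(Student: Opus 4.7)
I will chain together Theorem~\ref{counit}, Lemma~\ref{k}, Lemma~\ref{lemassumed}, and Proposition~\ref{propassumed}. Writing $GY := \left\{\dot{\Delta}(\mathsf{0}, \j-),\, \AAAA_Y\right\}_{\bi}$, the first step is to invoke Theorem~\ref{counit}: since $L$ is pseudocomonadic and therefore in particular pseudoprecomonadic, $\overline{\varepsilon}$ is a pseudonatural equivalence if and only if, for every object $Y$ of $\bbb$, the object $JGY$ is the descent object of $J\circ \AAAA_Y$, i.e.\ the canonical comparison $JGY \to \left\{\dot{\Delta}(\mathsf{0}, \j-),\, J\circ \AAAA_Y\right\}_{\bi}$ is an equivalence.

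Next, I combine Lemma~\ref{k} (which provides a pseudonatural isomorphism $J\circ \AAAA_Y \simeq \VVVV_Y \circ \j$) with Lemma~\ref{lemassumed} (which says $L\circ \VVVV_Y$ is of absolute effective descent) to conclude that
\[ E\circ \AAAA_Y \;=\; L\circ J\circ \AAAA_Y \;\simeq\; L\circ \VVVV_Y\circ \j \]
is an absolute effective descent diagram whose descent object is $LY$. Because $L$ is pseudoprecomonadic, Corollary~\ref{CMEprecom} ensures that $\VVVV_Y$ itself is of effective descent, so $\VVVV_Y\mathsf{0}=Y$ is the descent object of $J\circ \AAAA_Y$. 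A direct bookkeeping (using how $G$ is built in Theorem~\ref{MAIN}) identifies the canonical comparison $JGY\to Y$ with the counit component $\overline{\varepsilon}_Y$. Applying $L$, the canonical comparison $EGY = LJGY \to LY$ exhibiting the descent object of $E\circ \AAAA_Y$ is precisely $L(\overline{\varepsilon}_Y)$. Hence ``$E$ preserves the descent object of $\AAAA_Y$'' is equivalent to ``$L(\overline{\varepsilon}_Y)$ is an equivalence''.

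The last step, which I expect to be the only mildly delicate point, is to pass from ``$L(\overline{\varepsilon}_Y)$ is an equivalence'' back to ``$\overline{\varepsilon}_Y$ is an equivalence''. One direction is automatic, as pseudofunctors preserve equivalences; for the other, I invoke Proposition~\ref{propassumed} together with the pseudocomonadicity of $L$. The latter means that the comparison pseudofunctor $\KKKK : \bbb \to \mathsf{Ps}\textrm{-}\TTTTT\textrm{-}\CoAlg$ is a biequivalence, identifying $L$ with the forgetful pseudofunctor $\mathsf{L}$; since a $\TTTTT$-pseudomorphism is an equivalence precisely when its underlying $1$-cell in $\ccc$ is (its inverse lifts through the pseudocoalgebra structure), $\mathsf{L}$, and hence $L$, reflects equivalences. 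Combining the three steps gives the stated biconditional for each $Y$, completing the proof.
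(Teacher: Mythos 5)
Your argument is correct, and its skeleton coincides with the paper's: both proofs reduce the statement to Theorem \ref{counit}, use Lemma \ref{k} to replace $J\circ\AAAA_Y$ by $\VVVV_Y\circ\j$, use Lemma \ref{lemassumed} to identify the descent object of $E\circ\AAAA_Y\simeq L\circ\VVVV_Y\circ\j$ with $LY$, and invoke the pseudocomonadicity of $L$ only for the direction that descends from $\ccc$ back to $\bbb$. The genuine difference is in how that last direction is handled. The paper works at the level of diagrams: writing $\BBBB_Y$ for the effective descent extension of $\AAAA_Y$ supplied by Corollary \ref{PMAIN}, it shows that $J\circ\BBBB_Y$ is of effective descent if and only if $E\circ\BBBB_Y$ is, and for the nontrivial implication it observes that $E\circ\BBBB_Y\simeq L\circ\VVVV_Y$ is then of \emph{absolute} effective descent, so that Proposition \ref{propassumed} applies verbatim. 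You instead work at the level of comparison $1$-cells and reduce to the claim that $L$ reflects equivalences because the forgetful pseudofunctor $\mathsf{L}$ from $\mathsf{Ps}\textrm{-}\TTTTT\textrm{-}\CoAlg$ does. That fact is true and standard (an adjoint pseudo-inverse of the underlying $1$-cell of a $\TTTTT$-pseudomorphism inherits a pseudomorphism structure), but it is not what Proposition \ref{propassumed} asserts, so your citation does not actually cover the step you use it for: you are importing an external, unproved (if well-known) lemma where the paper stays inside its own toolkit; the trade-off is that your reflection-of-equivalences argument is more elementary and avoids juggling absolute effective descent diagrams. Two further small points, neither affecting correctness: your identification of the comparison $EGY\to LY$ with $L(\overline{\varepsilon}_Y)$ tacitly uses that $L$ preserves the descent object of $J\circ\AAAA_Y$ (which holds, again by Lemma \ref{lemassumed}), so the bookkeeping is slightly less direct than advertised; and $E\circ\AAAA_Y$ is a diagram on $\Delta$ rather than $\dot{\Delta}$, so ``absolute effective descent'' properly refers to its extension $L\circ\VVVV_Y$.
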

\begin{proof}
By Corollary \ref{PMAIN}, since $J$ is left biadjoint, for each object $Y$ of $\bbb $, there is an effective descent diagram $\BBBB _ Y : \dot{\Delta }\to\aaa $ such that $\BBBB _ Y\circ\j\simeq \AAAA _ Y $.
By the commutativity of the triangles $L\circ J=E$ and $J\circ R=U $, since $(E\dashv R, \rho , \mu ) $ and $(L\dashv U, \eta , \varepsilon , s, t)$ induce the same pseudocomonad, our setting satisfies the hypotheses of Lemma \ref{k}. Thus, for each object $Y$ of $\bbb $, there is a pseudonatural equivalence 
$$J\circ\BBBB _ Y\circ\j \simeq J\circ \AAAA _ Y\simeq\VVVV _Y \circ \j .$$

By Theorem \ref{counit}, to complete this proof, it is enough to show that $J\circ \BBBB _ Y $ is of effective descent if and only if $E\circ \BBBB _ Y $ is of effective descent.

Firstly, we assume that $J\circ \BBBB _ Y $ is of effective descent. In this case, since $J\circ\BBBB _ Y\circ\j \simeq \VVVV _Y \circ \j $ and $\VVVV _ Y $ is of effective descent, we conclude that $\VVVV _ Y\simeq J\circ\BBBB _ Y$. Thus, by Lemma \ref{lemassumed}, 
$$L\circ \VVVV _ Y\simeq L\circ J\circ \BBBB _ Y = E\circ \BBBB _ Y $$
is, in particular, of effective descent.

Reciprocally, we assume that $E\circ \BBBB _ Y $ is of effective descent. Again, since $E\circ\BBBB _ Y\circ\j \simeq L\circ\VVVV _Y \circ \j $ and $L\circ\VVVV _Y$ is of absolute effective descent, we conclude that $E\circ\BBBB _ Y\simeq L\circ \VVVV _ Y $ is of absolute effective descent. Therefore, since $L$ is pseudocomonadic, by Proposition \ref{propassumed}, we conclude that $J\circ\BBBB _ Y $ is of effective descent.

\end{proof}

\section{Coherence}\label{coherence}
A $2$-(co)monadic approach to \textit{coherence} consists of studying the inclusion of the $2$-category of strict (co)algebras into the $2$-category of pseudo(co)algebras of a given $2$-(co)monad to get general coherence results~\cite{Power, SLACK2002, Power89}. More precisely, one is interested, firstly, to understand whether the inclusion of the $2$-category of strict coalgebras into the $2$-category of pseudocoalgebras has a right $2$-adjoint $G$ (what is called a ``coherence theorem of the first type'' in \cite{SLACK2002}). Secondly, if there is such a right $2$-adjoint,  one is interested in investigating whether every pseudocoalgebra $\mathsf{z} $ is equivalent to the strict replacement $G(\mathsf{z})$ (what is called  a ``coherence theorem of the second type'' in \cite{SLACK2002}).

We fix the notation of this section as follows: we have a $2$-comonad $\TTTTT = (\TTTTT, \varpi, \varepsilon ) $ on a $2$-category $\ccc$. We denote by  $\TTTTT$-$\CoAlg  _{{}_{\mathsf{s}}} $ the $2$-category of strict coalgebras, strict morphisms and $\TTTTT$-transformations, that is to say, the usual $\CAT$-enriched category of coalgebras of the $\CAT $-comonad $\TTTTT $. The $2$-adjunction $E\dashv R :\TTTTT\textrm{-}\CoAlg  _{{}_{\mathsf{s}}}\to\ccc $ induces the Eilenberg-Moore factorization w.r.t. the pseudocoalgebras:
$$\xymatrix{\TTTTT\textrm{-} \CoAlg  _{{}_{\mathsf{s}}}\ar[r]^-{J}\ar[rd]_-{E}& \mathsf{Ps} \textrm{-} \TTTTT\textrm{-} \CoAlg \ar[d]^-{L}\\
&\ccc }$$
in which $J: \TTTTT\textrm{-} \CoAlg  _{{}_{\mathsf{s}}}\to\mathsf{Ps}\textrm{-}\TTTTT\textrm{-}\CoAlg $ is the usual inclusion.

Firstly, Corollary \ref{PMAIN} gives, in particular, necessary and sufficient conditions for which a $2$-comonad satisfies the ``coherence theorem of the first type'' and a weaker version of it, that is to say, it also studies when $J$ has a right biadjoint $G$.
Secondly, Corollary \ref{generalizedcoherence} gives necessary and sufficient conditions for getting a stronger version of the ``coherence theorem of the second type'', that is to say, it studies when the counit of the obtained biadjunction/$2$-adjunction is a pseudonatural equivalence. 

\begin{coro}[Coherence Theorem]\label{1}
Let $\TTTTT = (\TTTTT, \varpi, \varepsilon ) $ be a $2$-comonad on a $2$-category $\ccc $. It induces a $2$-adjunction $(E\dashv R, \rho , \varepsilon )$ and a biadjunction $(L\dashv U, \eta , \varepsilon , s,t)$ such that
\small
$$\xymatrix{\TTTTT\textrm{-} \CoAlg  _{{}_{\mathsf{s}}} \ar[r]^-{J}\ar[rd]_-{E}& \mathsf{Ps} \textrm{-} \TTTTT\textrm{-} \CoAlg \ar[d]^-{L}\\
&\ccc }$$
\normalsize
commutes.
The inclusion 
$J: \TTTTT\textrm{-}\CoAlg  _{{}_{\mathsf{s}}}\to\mathsf{Ps}\textrm{-}\TTTTT\textrm{-}\CoAlg $
has a right biadjoint if and only if $\TTTTT\textrm{-}\CoAlg  _{{}_{\mathsf{s}}}$ has the descent object of
\begin{equation*}\tag{$\AAAA _\mathsf{z}$}  
\xymatrix{  RL\mathsf{z}\ar@<-2.5ex>[rrrr]_-{RL(\eta _{{}_\mathsf{z}})}\ar@<2.5ex>[rrrr]^-{\rho _ {{}_{RL\mathsf{z}}}} &&&&  R\TTTTT L\mathsf{z} \ar[llll]|-{R(\varepsilon _{{}_{L\mathsf{z}}})}
\ar@<-2.5 ex>[rrrr]_-{R\TTTTT L(\eta_{{}_{\mathsf{z}}})}\ar[rrrr]|-{RL(\eta_{{}_{UL\mathsf{z} }})}\ar@<2.5ex>[rrrr]^-{ \rho _{{}_{R\TTTTT L\mathsf{z}}}} &&&& R\TTTTT ^2 L\mathsf{z} }
\end{equation*}
for every pseudocoalgebra $\mathsf{z}$ of $\mathsf{Ps}\textrm{-}\TTTTT\textrm{-}\CoAlg $. In this case, $J$ is left biadjoint to $G$, given by
$G\mathsf{z}:= \left\{ \dot{\Delta } (\mathsf{0}, \j - ), \AAAA _\mathsf{z}\right\} _{\bi }$.
Moreover, assuming the existence of the biadjunction $(J\dashv G, \overline{\varepsilon}, \overline{\eta}, \overline{s}, \overline{t})$, the counit
$\overline{\varepsilon }: JG\longrightarrow \id _ {{}_{\mathsf{Ps}\textrm{-}\TTTTT\textrm{-}\CoAlg }}$
is a pseudonatural equivalence if and only if $E$ preserves the descent object of $\AAAA _ \mathsf{z} $ for every pseudocoalgebra $\mathsf{z}$.

Furthermore, $J$ has a genuine right $2$-adjoint $G$ if and only if $\TTTTT\textrm{-}\CoAlg  _{{}_{\mathsf{s}}}$ admits the strict descent object of $\AAAA _\mathsf{z}$ for every $\TTTTT $-pseudocoalgebra $\mathsf{z}$. In this case, the right $2$-adjoint is given by $\displaystyle G\mathsf{z}:= \left\{ \dot{\Delta } (\mathsf{0}, \j - ), \AAAA _\mathsf{z}\right\}$.

\end{coro}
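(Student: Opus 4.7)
The plan is to recognise Corollary \ref{1} as a direct application of Corollary \ref{PMAIN} (both its biadjoint and strict halves) together with Corollary \ref{generalizedcoherence}, applied to the Eilenberg--Moore factorization triangle $L\circ J=E$. What makes this work is that the canonical forgetful pseudofunctor $L:\mathsf{Ps}\textrm{-}\TTTTT\textrm{-}\CoAlg\to\ccc$ is, by construction, pseudocomonadic (the comparison induced by $L\dashv U$ is, up to isomorphism, the identity), hence in particular pseudoprecomonadic, and that the biadjunction $L\dashv U$ and the $2$-adjunction $E\dashv R$ both induce the pseudocomonad $\TTTTT$.

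First I would dispatch the biadjoint part: the triangle commutes by the very definition of the Eilenberg--Moore factorization, so Corollary \ref{PMAIN} reduces the existence of a right biadjoint $G$ to $J$ to the existence of the descent objects $\left\{\dot{\Delta}(\mathsf{0},\j-),\AAAA_{\mathsf{z}}\right\}_{\bi}$ in $\TTTTT\textrm{-}\CoAlg_{{}_{\mathsf{s}}}$, with $G\mathsf{z}$ given by that descent object. For the counit claim I would invoke Corollary \ref{generalizedcoherence}: its extra hypothesis $J\circ R=U$ is immediate by inspection of the definitions, since $R(Z)=(\TTTTT Z,\varpi_Z)$ as a strict coalgebra and $J$ turns this into the pseudocoalgebra $(\TTTTT Z,\varpi_Z,\id,\id)=U(Z)$. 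Together with the pseudocomonadicity of $L$ this yields the equivalence between $\overline{\varepsilon}$ being a pseudonatural equivalence and $E$ preserving the descent objects $\AAAA_{\mathsf{z}}$, which is the stated ``coherence theorem of the second type''.

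For the strict $2$-adjoint case I would apply the strict half of Corollary \ref{PMAIN}. Its hypotheses all follow from $\TTTTT$ being a genuine $2$-comonad: $\varpi$ and $\varepsilon$ are $2$-natural, the pseudocomonad coherence modifications $\Lambda,\mathsf{s},\delta$ are identities, and consequently $U:\ccc\to\mathsf{Ps}\textrm{-}\TTTTT\textrm{-}\CoAlg$ given by $Z\mapsto(\TTTTT Z,\varpi_Z,\id,\id)$ is a $2$-functor; $(E\dashv R,\rho,\varepsilon)$ is a strict $2$-adjunction by hypothesis; the restriction $(\eta J)$ is $2$-natural because its component at a strict coalgebra $(Z,\rho_{\mathsf{z}})$ is simply $\rho_{\mathsf{z}}$ viewed as a strict coalgebra morphism $\mathsf{z}\to UL\mathsf{z}$; and the comparison $\KKKK:\mathsf{Ps}\textrm{-}\TTTTT\textrm{-}\CoAlg\to\mathsf{Ps}\textrm{-}\TTTTT\textrm{-}\CoAlg$ induced by $L\dashv U$ is (isomorphic to) the identity, hence locally an isomorphism. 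The strict half of Corollary \ref{PMAIN} then delivers the $2$-adjoint statement and the formula $G\mathsf{z}=\{\dot{\Delta}(\mathsf{0},\j-),\AAAA_{\mathsf{z}}\}$.

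The main obstacle I anticipate is purely bookkeeping: verifying that the intricate diagram $\AAAA_{\mathsf{z}}$ of Definition \ref{trianglediagram}, evaluated at $Y=\mathsf{z}$, reduces under the identifications $L\circ J=E$, $J\circ R=U$ and the strictness of $\TTTTT$ as a $2$-comonad to the explicit parallel diagram displayed in the statement of Corollary \ref{1}. Concretely, the composite $RL(U(\mu_{L\mathsf{z}})\eta_{JRL\mathsf{z}})\rho_{RL\mathsf{z}}$ must collapse to $\rho_{RL\mathsf{z}}$ (and similarly at the next level), which follows from the strict-coalgebra axiom $\TTTTT(\rho)\rho=\varpi\rho$ together with the triangular identities of $E\dashv R$; and the $2$-cell components $\AAAA_{\mathsf{z}}(\sigma_{ij}),\AAAA_{\mathsf{z}}(n_i)$ become identities for the same reasons, since all the coherence data entering their definition (the $\llll,\uuuu,\rrrr$ invertors and the modifications $s,t,v,w,\Lambda,\mathsf{s},\delta$) are identities in the strict setting. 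Once this simplification is recorded, the three applications above produce the corollary exactly as stated.
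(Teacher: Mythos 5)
Your proposal is correct and follows essentially the same route as the paper, whose proof is precisely the observation that the two adjunctions induce the same pseudocomonad and that $(\eta J)=(J\rho)$ is $2$-natural, so that Corollary \ref{PMAIN} and Corollary \ref{generalizedcoherence} apply to the triangle $L\circ J=E$. Your additional checks (pseudocomonadicity of $L$, the identity $J\circ R=U$, and the collapse of $\AAAA_{\mathsf{z}}$ to the displayed strict diagram) are exactly the verifications the paper leaves implicit.
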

\begin{proof}
Since $(E\dashv R, \rho , \varepsilon )$ and $(L\dashv U, \eta , \varepsilon , s,t)$ induce the same pseudocomonad and $(\eta J ) = (J\rho ) $ is a $2$-natural transformation, it is enough to apply Corollary \ref{generalizedcoherence} and Corollary \ref{PMAIN} to the triangle $L\circ J = E$.
\end{proof}

We say that a $2$-comonad $\TTTTT $ satisfies the \textit{main coherence theorem} if there is a right $2$-adjoint $\mathsf{Ps}\textrm{-}\TTTTT\textrm{-}\CoAlg\to \TTTTT\textrm{-}\CoAlg _{{}_{\mathsf{s}}} $ to the inclusion and the counit of such $2$-adjunction is a pseudonatural equivalence.
 
To get the original statement of \cite{SLACK2002}, we have to employ the following well known result (which is a consequence of a more general result on  enriched comonads):

\textit{Let $\TTTTT$ be a $2$-comonad on $\ccc $. The forgetful $2$-functor $\TTTTT\textrm{-}\CoAlg _{{}_{\mathsf{s}}}\to\ccc $ creates all those strict descent objects which exist in $\ccc $ and are preserved by $\TTTTT $ and $\TTTTT ^2 $.}

Employing this result and Corollary \ref{1}, we prove Theorem 3.2 and Theorem 4.4 of \cite{SLACK2002}. For instance, we get:

\begin{coro}[\cite{SLACK2002}]
Let $\TTTTT $ be a $2$-comonad on a $2$-category $\ccc $. If $\ccc $ has and $\TTTTT $ preserves strict descent objects, then $\TTTTT $ satisfies the main coherence theorem.
\end{coro}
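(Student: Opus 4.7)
The plan is to apply Corollary \ref{1} together with the quoted creation property of the forgetful $2$-functor $E:\TTTTT\textrm{-}\CoAlg _{{}_{\mathsf{s}}}\to\ccc $. By the last two clauses of that corollary, the main coherence theorem for $\TTTTT$ is equivalent to the conjunction of: (a) for every pseudocoalgebra $\mathsf{z}$, the $2$-category $\TTTTT\textrm{-}\CoAlg _{{}_{\mathsf{s}}}$ admits the strict descent object of $\AAAA _\mathsf{z}:\Delta\to \TTTTT\textrm{-}\CoAlg _{{}_{\mathsf{s}}}$; and (b) $E$ preserves the (bicategorical) descent object of each such $\AAAA _\mathsf{z}$. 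The entire proof reduces to verifying these two conditions.

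For (a), the hypothesis that $\ccc $ has strict descent objects gives the strict descent object of $E\circ \AAAA _\mathsf{z}$ in $\ccc $ for every $\mathsf{z}$. Since $\TTTTT$ preserves strict descent objects, so does $\TTTTT ^2=\TTTTT\circ \TTTTT $, and in particular this specific strict descent object of $E\circ\AAAA _\mathsf{z}$ is preserved by both $\TTTTT$ and $\TTTTT^2$. The quoted creation property then yields the existence of the strict descent object of $\AAAA _\mathsf{z} $ in $\TTTTT\textrm{-}\CoAlg _{{}_{\mathsf{s}}} $, and further says that it is strictly preserved by $E$. By the final assertion of Corollary \ref{1}, this already produces the right $2$-adjoint $G$ to the inclusion $J$.

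For (b), the creation statement just obtained implies that $E $ sends the strict descent object of $\AAAA _\mathsf{z}$ isomorphically to the strict descent object of $E\circ \AAAA _\mathsf{z}$. By Lemma \ref{strictequivalence}, strict descent objects are in particular descent objects, so this isomorphism is also an equivalence between the respective bicategorical descent objects; that is, $E $ preserves the descent object of $\AAAA _\mathsf{z} $. The counit clause of Corollary \ref{1} then guarantees that the counit $\overline{\varepsilon}:JG\longrightarrow \id _ {{}_{\mathsf{Ps}\textrm{-}\TTTTT\textrm{-}\CoAlg }}$ of the resulting $2$-adjunction $J\dashv G $ is a pseudonatural equivalence, which is the second half of the main coherence theorem.

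There is no substantial obstacle here: the deep content sits in Corollary \ref{1} (which itself rests on Theorem \ref{MAIN}, Theorem \ref{MAINstrict} and Corollary \ref{generalizedcoherence}) and in the enriched creation lemma for forgetful $2$-functors of strict $\TTTTT $-coalgebras. The present corollary amounts to a dictionary translation between the global hypotheses on $\ccc $ and $\TTTTT $ and the diagram-wise hypotheses demanded by Corollary \ref{1}.
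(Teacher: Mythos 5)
Your proof is correct and follows exactly the route the paper intends: the paper gives no written proof for this corollary but states immediately beforehand that it follows from the quoted creation property of the forgetful $2$-functor together with Corollary \ref{1}, which is precisely your argument. Your two verifications --- existence of the strict descent objects of $\AAAA_{\mathsf{z}}$ via creation (using that $\TTTTT^2$ inherits preservation from $\TTTTT$), and preservation by $E$ of the resulting descent objects via Lemma \ref{strictequivalence} --- are the intended content.
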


\section{On lifting biadjunctions}\label{App}
One of the most elementary corollaries of the adjoint triangle theorem~\cite{Dubuc} is about lifting adjunctions to adjunctions between the Eilenberg-Moore categories. In our case, let  $\TTTTT:\aaa \to \aaa $ and $\SSSSS : \ccc\to\ccc $ be $2$-comonads (with omitted comultiplications and counits), if 
$$\xymatrix{  \TTTTT\textrm{-}\CoAlg  _{{}_{\mathsf{s}}}\ar[d]_{\widehat{L}}\ar[r]^J & \SSSSS\textrm{-}\CoAlg  _{{}_{\mathsf{s}}}\ar[d]^{L}\\ 
\aaa\ar[r]_{E}& \ccc  }$$
is a commutative diagram, such that $E$ has a right $2$-adjoint $R$, then Proposition \ref{D1} gives necessary and sufficient conditions to construct a right $2$-adjoint to $J$. Also, of course, as a  consequence of Corollary \ref{PMAIN}, we have the analogous version for pseudocomonads.

\begin{coro}\label{lifting}
Let $\TTTTT :\aaa\to\aaa $ and $\SSSSS :\ccc\to \ccc $ be pseudocomonads. If the diagram 
$$\xymatrix{  \mathsf{Ps} \textrm{-}\TTTTT \textrm{-} \CoAlg \ar[d]_{\widehat{L}}\ar[r]^J & \mathsf{Ps} \textrm{-} \SSSSS \textrm{-} \CoAlg \ar[d]^{L}\\ 
\aaa\ar[r]_{E}& \ccc  }$$
commutes and $E$ has a right biadjoint, then $J$ has a right biadjoint provided that $\mathsf{Ps}\textrm{-}\TTTTT\textrm{-} \CoAlg $ has descent objects.
\end{coro}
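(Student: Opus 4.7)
The strategy is to reduce to the Biadjoint Triangle Theorem, Corollary \ref{PMAIN}, by replacing the commutative square in the hypothesis with a commutative triangle over $\ccc$. Setting $E' := L\circ J = E\circ \widehat{L}$ produces the commutative triangle of pseudofunctors
$$\xymatrix{  \mathsf{Ps}\textrm{-}\TTTTT\textrm{-}\CoAlg \ar[rr]^-{J}\ar[dr]_-{E'} && \mathsf{Ps}\textrm{-}\SSSSS\textrm{-}\CoAlg \ar[dl]^-{L}\\
& \ccc &  }$$
to which I plan to apply Corollary \ref{PMAIN}.

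I will verify the three hypotheses required in turn. First, $L$ is pseudoprecomonadic: this is automatic, since the forgetful pseudofunctor from the $2$-category of pseudocoalgebras of any pseudocomonad is pseudocomonadic (the induced comparison pseudofunctor into the Eilenberg--Moore $2$-category is the identity) and hence in particular pseudoprecomonadic. Second, $E'$ is left biadjoint: the forgetful $\widehat{L}$ admits the cofree pseudocoalgebra pseudofunctor as a right biadjoint, and $E$ has a right biadjoint by hypothesis, so $E' = E\circ\widehat{L}$ is the composite of two left biadjoints and therefore left biadjoint. Third, for every object $Y$ of $\mathsf{Ps}\textrm{-}\SSSSS\textrm{-}\CoAlg$, the $2$-category $\mathsf{Ps}\textrm{-}\TTTTT\textrm{-}\CoAlg$ must admit the descent object of the diagram $\AAAA_Y:\Delta\to\mathsf{Ps}\textrm{-}\TTTTT\textrm{-}\CoAlg$ of Definition \ref{trianglediagram}; this is precisely the standing hypothesis that $\mathsf{Ps}\textrm{-}\TTTTT\textrm{-}\CoAlg$ has descent objects.

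Feeding these verifications into Corollary \ref{PMAIN}, one obtains a right biadjoint $G$ to $J$, defined pointwise by
$$GY := \left\{ \dot{\Delta }(\mathsf{0}, \j - ), \AAAA _Y\right\} _{\bi } .$$
No step in this plan is a genuine obstacle: the argument is essentially a bookkeeping exercise once the triangle is set up and the two basic properties of the forgetful pseudofunctor from pseudocoalgebras --- its pseudocomonadicity and its being left biadjoint to the cofree pseudocoalgebra pseudofunctor --- are recognised. The only mild subtlety is keeping track of the two equivalent presentations $L\circ J$ and $E\circ\widehat{L}$ of $E'$ afforded by the commutativity of the original square, which is what permits one to simultaneously verify the pseudoprecomonadicity of $L$ and the biadjointness of $E'$.
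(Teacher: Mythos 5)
Your proposal is correct and is exactly the argument the paper intends: form the triangle $E'=L\circ J=E\circ\widehat{L}$, note that the forgetful $L$ is pseudocomonadic (hence pseudoprecomonadic), that $E'$ is a composite of left biadjoints, and apply Corollary \ref{PMAIN} using the assumed descent objects in $\mathsf{Ps}\textrm{-}\TTTTT\textrm{-}\CoAlg$. The paper gives no further detail, so your write-up simply makes the intended reduction explicit.
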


Recall that $\mathsf{Ps}\textrm{-}\TTTTT\textrm{-} \CoAlg $ has descent objects if $\aaa $ has and $\TTTTT $ preserves descent objects. Therefore the pseudofunctor $J$ of the last result has a right biadjoint in this case.

\subsection{On pseudo-Kan extensions}
One simple application of Corollary \ref{lifting} is about pseudo-Kan extensions. In the tricategory $2$-$\CAT $, the natural notion of Kan extension is that of pseudo-Kan extension. More precisely, a \textit{right pseudo-Kan extension of a pseudofunctor $\mathcal{D}: \sss\to \aaa $ along a pseudofunctor $\h : \sss\to\dot{\sss} $, denoted by $\Ps \textrm{-}\Ran _ {\h } \mathcal{D} $, is (if it exists) a birepresentation of the pseudofunctor 
$\WWWW\mapsto [\sss , \aaa ]_ {PS}(\WWWW\circ \h , \mathcal{D})$. } Recall that birepresentations are unique up to equivalence and, therefore, right pseudo-Kan extensions are unique up to pseudonatural equivalence.

Assuming that $\h:\sss\to\dot{\sss } $ is a pseudofunctor between small $2$-categories, in the setting described above, the following are natural problems on pseudo-Kan extensions: (1) investigating the left biadjointness of the pseudofunctor $\WWWW\to \WWWW\circ\h $, namely, investigating whether all right pseudo-Kan extensions along $\h $ exist; (2) understanding pointwise pseudo-Kan extensions (that is to say, proving the existence of right pseudo-Kan extensions provided that $\aaa $ has all bilimits).

It is shown in \cite{Power} that, if $\sss _ 0 $ denotes the discrete $2$-category of the objects of $\sss $, the restriction $[\sss , \aaa ]\to [\sss _ 0 , \aaa ] $ is $2$-comonadic, provided that $[\sss , \aaa ]\to [\sss _ 0 , \aaa ] $ has a right $2$-adjoint $\Ran _ {\sss\to\sss _ 0 } $. 
It is also shown there that the $2$-category of pseudocoalgebras of the induced $2$-comonad is $ [\sss  , \aaa ] _ {PS} $. It actually works more generally:  
$[\sss , \aaa ] _{PS}\to [\sss _ 0 , \aaa ] _ {PS} = [\sss _ 0 , \aaa ]$ is pseudocomonadic whenever there is a right biadjoint  $\Ps \textrm{-}\Ran _ {\sss _ 0\to\sss } : [\sss _ 0, \aaa ] _{PS}\to [\sss  , \aaa ] _{PS} $ 
 because existing bilimits of $\aaa $ are constructed objectwise in $[\sss , \aaa ] _{PS}$ (and, therefore, the hypotheses of the pseudocomonadicity theorem~\cite{CME} are satisfied). Thus, we get the following commutative square:
$$\xymatrix{ [\dot{\sss }, \aaa ] _{PS}\ar[d]\ar[r]^{[\h , \aaa ] _{PS} } & [\sss , \aaa ] _{PS}\ar[d]\\ 
[\dot{\sss _ 0 }, \aaa ] \ar[r]_{[\h , \aaa ] _{PS}}& [\sss  _ 0, \aaa ]  }$$  
Thereby, Corollary \ref{lifting} gives a way to study pseudo-Kan extensions, even in the absence of strict $2$-limits. That is to say, on  one hand, if the $2$-category $\aaa $ is complete, our results give pseudo-Kan extensions as descent objects of strict $2$-limits. On the other hand, in the absence of strict $2$-limits and, in particular, assuming that $\aaa $ is bicategorically complete,  we can construct the following pseudo-Kan extensions:
\begin{eqnarray*}
\Ps \textrm{-}\Ran _ {{}_{\sss _ 0 \to \dot{\sss }_0 } } : & [\sss _ 0, \aaa ] &\to  [\dot{\sss }_0, \aaa ] _{PS}\\
& \mathcal{D} & \mapsto  \Ps \textrm{-}\Ran _ {{}_{\sss _ 0 \to \dot{\sss }_0 } } \mathcal{D}: \left( x\mapsto \prod _{\h(a)=x } \mathcal{D}a\right)\\
&&\\
\Ps \textrm{-}\Ran_ {{}_{\dot{\sss } _ 0 \to \dot{\sss }} } : & [\dot{\sss } _ 0, \aaa ] &\to  [\dot{\sss }, \aaa ] _{PS}\\
& \mathcal{D} & \mapsto  \Ps \textrm{-}\Ran_ {{}_{\dot{\sss } _ 0 \to \dot{\sss }} } \mathcal{D} : \left( x\mapsto \prod _{y\in \dot{\sss } _ 0 } \dot{\sss } (x, y)\pitchfork \mathcal{D} y\right)\\
&&\\
\Ps \textrm{-}\Ran _ {{}_{\sss _ 0 \to \sss } } : & [\sss _ 0, \aaa ] &\to  [\sss , \aaa ] _{PS}\\
& \mathcal{D} & \mapsto  \Ps \textrm{-}\Ran _ {{}_{\sss _ 0 \to \sss } } \mathcal{D}: \left( a\mapsto \prod _{b\in \sss _ 0 } \sss (a, b)\pitchfork \mathcal{D} b\right)
\end{eqnarray*}
in which $\prod $ and $\pitchfork $ denote the bilimit versions of the product and cotensor product, respectively. Thereby, by Corollary \ref{lifting}, the pseudo-Kan extension $\Ps \textrm{-}\Ran _{\h } $ can be constructed pointwise as descent objects of a diagram obtained from the pseudo-Kan extensions above. Namely, $\Ps \textrm{-}\Ran _ {\h }\mathcal{D}x $ is the descent object of a diagram
\small
\[\xymatrix{  \mathfrak{a}_ 0\ar@<1.5ex>[rr]\ar@<-1.5ex>[rr] 
&& \mathfrak{a}_1\ar[ll]\ar@<1.5 ex>[rr]\ar[rr]\ar@<-1.5ex>[rr] 
&& \mathfrak{a}_2 }\]
\normalsize
in which, by Theorem \ref{MAIN} and the last observations, 
\small
\begin{eqnarray*}
\mathfrak{a}_ 0 &=& \displaystyle \prod _{y\in \dot{\sss  _ 0 }} \left(\dot{\sss } (x, y) \pitchfork \prod _{\h (a)=y } \mathcal{D}a \right)\simeq  \displaystyle\prod _{a\in \sss _ 0 } \left( \dot{\sss}(x,\h (a))\pitchfork \mathcal{D}a\right) \\
\\
\mathfrak{a}_ 1 & = & \left(\dot{\sss } (x, y) \pitchfork \prod _{\h (a)=y } \left( \prod _{b\in \sss _ 0 } \sss (a, b)\pitchfork \mathcal{D} b\right) \right)\\
                 &\simeq & \displaystyle\prod _{a\in \sss  _ 0 } \left( \dot{ \sss }(x,\h (a))\pitchfork  \left( \prod _{b\in \sss _ 0 } \sss (a, b)\pitchfork \mathcal{D} b\right)\right)\\
                 &\simeq & \displaystyle\prod_ {(a,b)\in \sss  _ 0\times \sss _ 0 } \left(\left(\sss (a,b)\times \dot{\sss }(x,\h (a))\right) \pitchfork \mathcal{D}b\right)\\
\\
\mathfrak{a}_ 2 &\simeq & \displaystyle \prod _ {(a, b, c)\in \sss  _ 0\times \sss _ 0\times \sss _ 0} \left( \left( \sss (b, c)\times \sss (a, b)\times \dot{\sss } (x, \h (a)) \right)\pitchfork \mathcal{D}c\right) 
\end{eqnarray*}
\normalsize
This implies that, indeed, if $\aaa $ is bicategorically complete, then $\Ps \textrm{-} \Ran _ {\h } \mathcal{D} $ exists and, once we assume the results of \cite{ RS87} related to the construction of weighted bilimits via descent objects, we conclude that:

\begin{prop}[Pointwise pseudo-Kan extension]\label{point}
Let $\sss , \dot{\sss } $ be small $2$-categories and $\aaa $ be a bicategorically complete $2$-category. If $\h : \sss\to\dot{\sss } $ is a pseudofunctor, then $$\Ps \textrm{-} \Ran _ {\h } \mathcal{D} x = \left\{ \dot{\sss } (x, \h -), \mathcal{D}\right\} _ {\bi } $$ 
\end{prop}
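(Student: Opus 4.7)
The plan is to verify the hypotheses of Corollary \ref{lifting} for the commutative square of restriction $2$-functors
\[\xymatrix{ [\dot{\sss }, \aaa ] _{PS}\ar[d]\ar[r]^{[\h , \aaa ] _{PS} } & [\sss , \aaa ] _{PS}\ar[d]\\
[\dot{\sss } _ 0 , \aaa ] \ar[r]_{[\h _ 0 , \aaa ]}& [\sss  _ 0, \aaa ]  }\]
and then to unwind the resulting pointwise formula for $\Ps\textrm{-}\Ran _ {\h}\mathcal{D}$ into the asserted weighted bilimit. First I would observe that bicategorical completeness of $\aaa$ gives, by pointwise construction, the right biadjoints $\Ps\textrm{-}\Ran _ {\sss_0\to\sss}$ and $\Ps\textrm{-}\Ran _ {\dot{\sss}_0\to\dot{\sss}}$; by the discussion preceding the proposition, the vertical restrictions are then pseudocomonadic, so the square identifies with one between $2$-categories of pseudocoalgebras for the induced pseudocomonads on $[\dot{\sss}_0,\aaa]$ and $[\sss_0,\aaa]$. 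Since $\sss_0$ and $\dot{\sss}_0$ are discrete, the bottom horizontal $[\h_0,\aaa]$ admits the right biadjoint $\mathcal{D}\mapsto\bigl(x\mapsto\prod _ {\h(a)=x}\mathcal{D}a\bigr)$, which exists because $\aaa$ has bicategorical products. Finally, $[\dot{\sss},\aaa]_{PS}$ has all descent objects, computed pointwise from those of $\aaa$. Corollary \ref{lifting} therefore yields the desired right biadjoint $\Ps\textrm{-}\Ran _ {\h}$ to $[\h,\aaa]_{PS}$.

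Next I would read off the pointwise formula for $\Ps\textrm{-}\Ran _ {\h}\mathcal{D}$. By Theorem \ref{MAIN} (as used to prove Corollary \ref{lifting}), $\Ps\textrm{-}\Ran _ {\h}\mathcal{D}$ is the descent object in $[\dot{\sss},\aaa]_{PS}$ of the diagram $\AAAA _ {\mathcal{D}}:\Delta\to[\dot{\sss},\aaa]_{PS}$ from Definition \ref{trianglediagram}. Evaluating at $x\in\dot{\sss}_0$ and using that descent objects in $[\dot{\sss}_0,\aaa]$ are formed pointwise, the value $(\Ps\textrm{-}\Ran _ {\h}\mathcal{D})x$ is the descent object in $\aaa$ of the diagram whose terms $\mathfrak{a}_0,\mathfrak{a}_1,\mathfrak{a}_2$ are already displayed in the excerpt; these come out as products of cotensors of $\mathcal{D}$ against iterated composites of hom-objects of $\sss$ and $\dot{\sss}$ precisely because the right biadjoints appearing in the square act pointwise by such products and cotensors.

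Finally I would identify this descent object with $\{\dot{\sss}(x,\h-),\mathcal{D}\} _ {\bi}$. This is the standard construction of a weighted bilimit out of products, cotensors, and a descent object developed in \cite{RS87}: the diagram with terms $\mathfrak{a}_0,\mathfrak{a}_1,\mathfrak{a}_2$ is exactly the descent presentation of $\{W,\mathcal{D}\} _ {\bi}$ for the weight $W(a)=\dot{\sss}(x,\h(a))$, once one matches the structure $2$-cells coming from Definition \ref{trianglediagram} with those induced by composition in $\sss$ and the pseudofunctoriality of $\h$. The main obstacle is precisely this matching: one must check that the faces and degeneracies of $\AAAA _ {\mathcal{D}}$, written in terms of the units, counits, and multiplications of the two biadjunctions $[\sss_0\hookrightarrow\sss,\aaa]\dashv\Ps\textrm{-}\Ran _ {\sss_0\to\sss}$ and its analogue for $\dot{\sss}$, transcribe under the above identifications to the canonical faces and degeneracies of the bar-type presentation of $\dot{\sss}(x,\h-)$ by representable weights. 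Once that is verified, a representability argument via Lemma \ref{Yonedaargument} concludes the proof.
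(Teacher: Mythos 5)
Your proposal follows essentially the same route as the paper: apply Corollary \ref{lifting} to the square of restriction pseudofunctors over the discrete $2$-categories $\sss_0$, $\dot{\sss}_0$, compute the resulting descent diagram pointwise as products of cotensors, and then identify it with the weighted bilimit via the construction of bilimits from products, cotensors and descent objects in \cite{RS87}. The one step you flag as the main obstacle (matching the faces and degeneracies of $\AAAA_{\mathcal{D}}$ with the bar-type presentation of the weight $\dot{\sss}(x,\h-)$) is exactly the point the paper itself delegates to \cite{RS87}, so your write-up is, if anything, slightly more explicit about what remains to be checked.
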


\begin{coro}
If $\mathcal{A}:\Delta\to\aaa $ is a pseudofunctor and $\aaa $ has the descent object of $\mathcal{A}$, then $\Ps \textrm{-} \Ran _ {\j } \mathcal{A}\mathsf{0}$ is the descent object of $\mathcal{A} $.
\end{coro}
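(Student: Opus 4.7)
The plan is to obtain the corollary directly from the pointwise formula for right pseudo-Kan extensions established in Proposition \ref{point}. Setting $\h = \j$ and $x = \mathsf{0}$ in that formula gives
$$\Ps \textrm{-}\Ran _ {\j }\mathcal{A}\mathsf{0} \simeq \left\{\dot{\Delta}(\mathsf{0}, \j -),\, \mathcal{A}\right\}_{\bi},$$
and the right-hand side is, by the very definition of descent object recalled in Section \ref{Descent}, the descent object of $\mathcal{A}$. Modulo a small technical point, this is the proof.

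The technical point is that Proposition \ref{point} is stated under the assumption that $\aaa$ is bicategorically complete, whereas here we only assume that the single descent object of $\mathcal{A}$ exists. To handle this, I would revisit the derivation of Proposition \ref{point} and observe that bicategorical completeness is invoked only to guarantee that the weighted bilimit $\{\dot{\Delta}(x, \j -), \mathcal{A}\}_{\bi}$ exists for \emph{every} object $x$ of $\dot{\Delta}$; the universal-property argument producing the explicit pointwise formula at a fixed $x$ requires only the existence of the corresponding bilimit at that $x$. Concretely, combining the defining universal property of the right pseudo-Kan extension with the strong bicategorical Yoneda Lemma (Lemma \ref{Yoneda2}) yields, for every object $X$ of $\aaa$, a pseudonatural equivalence
$$\aaa(X,\, \Ps \textrm{-}\Ran _ {\j }\mathcal{A}\mathsf{0}) \simeq [\Delta, \CAT]_{PS}\bigl(\dot{\Delta}(\mathsf{0}, \j -),\, \aaa(X, \mathcal{A} -)\bigr),$$
whose right-hand side is, by definition, the hom-category into the bilimit $\{\dot{\Delta}(\mathsf{0}, \j -), \mathcal{A}\}_{\bi}$.

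With this equivalence in hand, since the descent object of $\mathcal{A}$ exists in $\aaa$ by hypothesis, the bicategorical Yoneda embedding (Lemma \ref{Yoneda}) lifts the above pointwise equivalence of representables to an equivalence $\Ps \textrm{-}\Ran _ {\j }\mathcal{A}\mathsf{0} \simeq \{\dot{\Delta}(\mathsf{0}, \j -), \mathcal{A}\}_{\bi}$ inside $\aaa$. The only real obstacle, a very minor one, is the careful bookkeeping of the universal-property argument showing that the existence of this single descent object suffices to extract the value of $\Ps \textrm{-}\Ran _ {\j }\mathcal{A}$ at the specific object $\mathsf{0}$; every other ingredient is a direct invocation of results already proved in the paper.
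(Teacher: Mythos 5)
Your proof takes essentially the same route as the paper, which states this corollary without proof as an immediate specialization of Proposition \ref{point}: with $\h=\j$ and $x=\mathsf{0}$ the pointwise formula $\Ps\textrm{-}\Ran_{\j}\mathcal{A}\mathsf{0}\simeq\{\dot{\Delta}(\mathsf{0},\j-),\mathcal{A}\}_{\bi}$ is literally the definition of the descent object of $\mathcal{A}$. You are right that the only wrinkle is the bicategorical completeness hypothesis of Proposition \ref{point}, but the cleanest way to discharge it is not to try to extract the value at $\mathsf{0}$ from the global universal property of $\Ps\textrm{-}\Ran_{\j}\mathcal{A}$ (a right pseudo-Kan extension that merely exists need not be pointwise, so the equivalence $\aaa(X,\Ps\textrm{-}\Ran_{\j}\mathcal{A}\mathsf{0})\simeq[\Delta,\CAT]_{PS}(\dot{\Delta}(\mathsf{0},\j-),\aaa(X,\mathcal{A}-))$ is what has to be proved, not a formal consequence of the defining birepresentation); instead, observe that since $\j$ is fully faithful the remaining pointwise bilimits $\{\dot{\Delta}(\j\mathsf{n},\j-),\mathcal{A}\}_{\bi}\simeq\{\Delta(\mathsf{n},-),\mathcal{A}\}_{\bi}\simeq\mathcal{A}\mathsf{n}$ exist by the strong Yoneda Lemma (Lemma \ref{Yoneda2}), so together with the hypothesized descent object at $\mathsf{0}$ the whole pointwise right pseudo-Kan extension exists, and the end-calculus argument of the closing Remark shows it is a genuine right pseudo-Kan extension, whose value at $\mathsf{0}$ is the descent object as claimed.
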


Moreover, by the bicategorical Yoneda Lemma, we get: 

\begin{coro}
If $\h : \sss\to\dot{\sss } $ is locally an equivalence and there is a biadjunction
$[\h , \aaa]\dashv \Ps\textrm{-}\Ran _ {\h} $,
its counit is a pseudonatural equivalence.
\end{coro}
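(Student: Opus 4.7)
The plan is to show each component $(\bar{\varepsilon}_{\mathcal{D}})_a : (\Ps\textrm{-}\Ran_{\h}\mathcal{D})(\h(a))\to\mathcal{D}(a)$ of the counit is an equivalence in $\aaa$, for $\mathcal{D}\in [\sss, \aaa]_{PS}$ and $a\in\sss$. By Lemma~\ref{Yoneda}, the bicategorical Yoneda embedding $\YYYY : \aaa \to [\aaa^{\op}, \CAT]_{PS}$ is locally an equivalence and thus reflects equivalences in $\aaa$; hence it suffices to show that $\YYYY((\bar{\varepsilon}_{\mathcal{D}})_a)$ is a pseudonatural equivalence, which I will do by computing its source and target in the bicategorically complete target $[\aaa^{\op}, \CAT]_{PS}$.

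The first main step is to identify $\YYYY(\mathcal{D}(a))$ with the would-be pointwise formula for the pseudo-Kan extension. Proposition~\ref{point} applies to the pseudofunctor $\YYYY\circ \mathcal{D}: \sss \to [\aaa^{\op}, \CAT]_{PS}$, giving
\[
\Ps\textrm{-}\Ran_{\h}(\YYYY\circ \mathcal{D})(\h(a)) \simeq \{\dot{\sss}(\h(a), \h-), \YYYY\circ \mathcal{D}\}_{\bi},
\]
while the strong Yoneda Lemma (Lemma~\ref{Yoneda2}) yields $\YYYY(\mathcal{D}(a)) \simeq \{\sss(a, -), \YYYY\circ \mathcal{D}\}_{\bi}$. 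Since $\h$ is locally an equivalence, the equivalences $\h_{a, b}: \sss(a, b) \simeq \dot{\sss}(\h(a), \h(b))$ assemble into a pseudonatural equivalence $\sss(a, -) \simeq \dot{\sss}(\h(a), \h-)$ in $[\sss, \CAT]_{PS}$; the two bilimits above therefore coincide, producing $\YYYY(\mathcal{D}(a)) \simeq \Ps\textrm{-}\Ran_{\h}(\YYYY\circ \mathcal{D})(\h(a))$.

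The second step is to match $\YYYY((\Ps\textrm{-}\Ran_{\h}\mathcal{D})(\h(a)))$ with $\Ps\textrm{-}\Ran_{\h}(\YYYY\circ \mathcal{D})(\h(a))$. Since $\YYYY$ preserves weighted bilimits (Lemma~\ref{Yonedaargument}), it transports the birepresentation defining the biadjoint value $\Ps\textrm{-}\Ran_{\h}\mathcal{D}$ to a birepresentation in $[\aaa^{\op}, \CAT]_{PS}$; combined with the universal property of the biadjunction $[\h, \aaa] \dashv \Ps\textrm{-}\Ran_{\h}$ and the uniqueness (up to pseudonatural equivalence) of pseudo-Kan extensions as birepresentations, this gives the desired $\YYYY((\Ps\textrm{-}\Ran_{\h}\mathcal{D})(\h(a))) \simeq \Ps\textrm{-}\Ran_{\h}(\YYYY\circ \mathcal{D})(\h(a))$. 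Composing with the first step yields a pseudonatural equivalence $\YYYY((\Ps\textrm{-}\Ran_{\h}\mathcal{D})(\h(a))) \simeq \YYYY(\mathcal{D}(a))$.

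The main obstacle I expect is the final bookkeeping step: verifying that the composite pseudonatural equivalence thus constructed coincides, up to invertible modification, with $\YYYY((\bar{\varepsilon}_{\mathcal{D}})_a)$. This amounts to tracing $\id_{\Ps\textrm{-}\Ran_{\h}\mathcal{D}}$ through the hom-equivalence of the biadjunction --- which by definition produces the counit $\bar{\varepsilon}_{\mathcal{D}}$ --- and confirming, via naturality in $X \in \aaa$ and in $a \in \sss$, that its image matches the chain of strong Yoneda identifications and the equivalence $\sss(a, -) \simeq \dot{\sss}(\h(a), \h-)$ used above. Once this matching is established, Lemma~\ref{Yoneda} delivers that each $(\bar{\varepsilon}_{\mathcal{D}})_a$ is an equivalence in $\aaa$, and the pseudonaturality in both $\mathcal{D}$ and $a$ promotes this pointwise statement to the conclusion that $\bar{\varepsilon}$ is a pseudonatural equivalence.
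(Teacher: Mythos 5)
Your overall strategy --- reduce to a pointwise formula, use that $\h$ locally an equivalence gives a pseudonatural equivalence $\sss (a,-)\simeq \dot{\sss }(\h (a), \h -)$, and conclude by the strong Yoneda Lemma --- is exactly the argument the paper has in mind, and your first step (computing $\Ps \textrm{-}\Ran _ {\h }(\YYYY\circ\mathcal{D})(\h (a))\simeq \YYYY (\mathcal{D}(a))$ inside the bicategorically complete $2$-category $\left[ \aaa ^{\op }, \CAT\right] _ {PS}$) is sound. The gap is in your second step. The value $\Ps \textrm{-}\Ran _{\h }\mathcal{D}$ is defined as a birepresentation of the pseudofunctor $\WWWW\mapsto \left[ \sss , \aaa\right] _ {PS}(\WWWW\circ\h , \mathcal{D})$ on $\left[ \dot{\sss }, \aaa\right] _ {PS}$; this is not a weighted bilimit in $\aaa $, so Lemma \ref{Yonedaargument} (preservation of weighted bilimits by $\YYYY : \aaa\to\left[ \aaa ^{\op }, \CAT\right] _ {PS}$) does not apply to it. The universal property of the biadjunction only controls $\left[ \dot{\sss }, \aaa \right] _ {PS}(\mathcal{V}, \Ps \textrm{-}\Ran _ {\h }\mathcal{D})$ for weights of the form $\mathcal{V}:\dot{\sss }\to\aaa $, whereas the pointwise value $\Ps \textrm{-}\Ran _ {\h }(\YYYY\circ\mathcal{D})(x)=\left\{ \dot{\sss }(x,\h -), \YYYY\circ\mathcal{D}\right\} _ {\bi }$ is probed by the $\CAT$-valued weights $\dot{\sss }(x,-)$, which are not of that form. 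The identification $\YYYY\circ (\Ps \textrm{-}\Ran _ {\h }\mathcal{D})\simeq \Ps \textrm{-}\Ran _ {\h }(\YYYY\circ\mathcal{D})$ that you assert is precisely the statement that the given pseudo-Kan extension is pointwise; it is the crux of the matter and does not follow from the mere existence of the biadjunction (this is the $2$-dimensional analogue of the classical fact that a merely global right Kan extension along a fully faithful functor need not be preserved by the Yoneda embedding, nor have an invertible counit --- the standard proofs all assume pointwiseness). As written, step two begs the question.

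The repair consistent with the paper's intent is to read the corollary in the setting of Proposition \ref{point} and Corollary \ref{lifting}, where $\Ps \textrm{-}\Ran _ {\h }$ has been constructed pointwise: then $(\Ps \textrm{-}\Ran _ {\h }\mathcal{D})(\h (a))\simeq \left\{ \dot{\sss }(\h (a), \h -), \mathcal{D}\right\} _ {\bi }\simeq \left\{ \sss (a,-), \mathcal{D}\right\} _ {\bi }\simeq \mathcal{D}(a)$ directly by Lemma \ref{Yoneda2}, with no detour through the presheaf $2$-category, and it only remains to check (as you note at the end) that this composite is the counit component. If you want to keep the hypotheses literal, you must either prove pointwiseness of the given $\Ps \textrm{-}\Ran _ {\h }$ or find an argument that avoids it.
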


Finally, let $\aaa $ be a $2$-category with all descent objects and $\TTTTT$ be a pseudocomonad on $\aaa $. Recall that, if $\TTTTT$ preserves all effective descent diagrams, $\mathsf{Ps} \textrm{-}\TTTTT\textrm{-} \CoAlg $
 has all descent objects. Therefore, if $\h : \sss\to\dot{\sss } $ is a pseudofunctor, in this setting, the commutative diagram below satisfies the hypotheses of Corollary \ref{lifting} (and, thereby, it can be used to lift pseudo-Kan extensions to pseudocoalgebras).
\small
$$\xymatrix{ [\dot{\sss } , \mathsf{Ps} \textrm{-}\TTTTT\textrm{-} \CoAlg ] _{PS}\ar[d]\ar[r] & [\sss  , \mathsf{Ps} \textrm{-}\TTTTT\textrm{-} \CoAlg] _{PS}\ar[d]\\ 
[\dot{\sss }, \aaa ] _{PS}\ar[r]& [\sss  , \aaa ] _{PS} }$$
\normalsize  

\begin{rem}
Assume that $\h : \sss\to\dot{\sss } $ is a pseudofunctor, in which $\sss , \dot{\sss } $ are small $2$-categories.
There is another way of proving Proposition \ref{point}. Firstly, we define the bilimit version of \textit{end}. That is to say, if $T: \sss\times\sss ^{\op }\to\CAT $ is a pseudofunctor, we define 
\small
$$ \displaystyle\int _ {\sss } T : = \left[ \aaa\times \aaa ^{\op }, \CAT\right] _{PS} \left( \aaa (-,-), T\right) $$
\normalsize
From this definition, it follows Fubini's theorem (up to equivalence). And, if $\mathcal{B}, \mathcal{D}: \sss\to \aaa $ are pseudofunctors, the following equivalence holds:
\small
$$ \displaystyle\int _ {\sss } \aaa (\mathcal{B}a, \mathcal{D}a)  \simeq \left[ \sss, \aaa\right] _{PS} \left( \mathcal{B}, \mathcal{D} \right) $$
\normalsize
Therefore, if $\h : \sss\to\dot{\sss } $ is a pseudofunctor and we define $\Ps\Ran _ {\h } \mathcal{D}x = \left\{ \dot{\sss } (x, \h -), \mathcal{D}\right\} _{\bi } $, we have the pseudonatural equivalences (analogous to the enriched case \cite{Kelly}) 
\small
\begin{eqnarray*}
\left[\dot{\sss }, \aaa \right] _ {PS}(\WWWW , \Ps\Ran _ {\h } \mathcal{D}) &\simeq & \int _ {\dot{\sss }}\aaa (\WWWW x, \Ps\Ran _ \h \mathcal{D}x)\\
                                                               &\simeq & \int _{\dot{\sss }}\aaa (\WWWW x, \left\{ \dot{\sss } (x, \h -), \mathcal{D}\right\} _{\bi })  \\
                                                             & \simeq & \int _ {\dot{\sss } }\left[ \sss , \CAT\right]_ {PS}(\dot{\sss } (x, \h -), \aaa (\WWWW x, \mathcal{D}-))\\
																														& \simeq & \int _ {\dot{\sss }} \int _ {\sss } \CAT (\dot{\sss } (x, \h (a)), \aaa (\WWWW x, \mathcal{D}a))\\
																														& \simeq & \int _ {\sss } \int _ {\dot{\sss }} \CAT (\dot{\sss } (x, \h (a)), \aaa (\WWWW x, \mathcal{D}a))\\
                                                             &\simeq & \int _ {\sss } \left[\dot{\sss }^{\op }, \CAT\right] _ {PS} (\dot{\sss } (-, \h (a)), \aaa (\WWWW -, \mathcal{D}a))\\
																														&\simeq &   \int _ {\sss } \aaa (\WWWW\circ\h (a), \mathcal{D}a)\\
                                                            &\simeq & \left[ \sss , \aaa\right]_ {PS} (\WWWW\circ \h , \mathcal{D})
\end{eqnarray*}
\normalsize
This completes the proof that if the pointwise right pseudo-Kan extension $\Ps\Ran _{\h } $ exists, it is a right pseudo-Kan extension. Within this setting and assuming this result, the original argument used to prove Proposition \ref{point} using biadjoint triangles gets the construction via descent objects of weighted bilimits originally given in \cite{RS87}.  
\end{rem}

 \end{document}